\newtheorem{theorem}{Theorem}[section]
\newtheorem{lemma}[theorem]{Lemma}
\newtheorem{proposition}[theorem]{Proposition}
\newtheorem{corollary}[theorem]{Corollary}
\theoremstyle{definition}
\newtheorem{definition}[theorem]{Definition}
\newtheorem{remark}[theorem]{Remark}
\numberwithin{equation}{section}
\def\loc {\mathop {\rm loc}\nolimits}
\def\skw {\mathop {\rm skew}\nolimits}
\def\sym {\mathop {\rm sym}\nolimits}
\def\spn {\mathop {\rm span}\nolimits}
\def\supp {\mathop {\rm supp}\nolimits}
\def\Div {\mathop {\rm Div}\nolimits}
\def\Curl {\mathop {\rm Curl}\nolimits}
\def\dist {\mathop {\rm dist}\nolimits}
\def\det {\mathop {\rm det}\nolimits}
\def\de {{\rm d}}
\def\e {{\rm e}}
\def\R {\mathbb R}
\def\N {\mathbb N}
\def\C {\mathbb C}
\title[A fractional approach to strain-gradient plasticity]{A fractional approach to strain-gradient plasticity: beyond core-radius of discrete dislocations}
\author[S. Almi]{Stefano Almi}
\address[Stefano Almi]{Department of Mathematics and Applications ``R.~Caccioppoli'', University of Naples Federico II, Via Cintia, Monte S. Angelo, 80126 Napoli, Italy.}
\email{stefano.almi@unina.it}
\author[M. Caponi]{Maicol Caponi}
\address[Maicol Caponi]{Department of Mathematics and Applications ``R.~Caccioppoli'', University of Naples Federico II, Via Cintia, Monte S. Angelo, 80126 Napoli, Italy.}
\email{maicol.caponi@unina.it}
\author[M. Friedrich]{Manuel Friedrich} 
\address[Manuel Friedrich]{Department of Mathematics, Friedrich-Alexander Universit\"at Erlangen-N\"urnberg. Cauerstr.~11,
D-91058 Erlangen, Germany}
\email{manuel.friedrich@fau.de}
\author[F. Solombrino]{Francesco Solombrino}
\address[Francesco Solombrino]{Department of Mathematics and Applications ``R.~Caccioppoli'', University of Naples Federico II, Via Cintia, Monte S. Angelo, 80126 Napoli, Italy.}
\email{francesco.solombrino@unina.it}
\date{}
\begin{document}

\subjclass[2020]{26A33, 
			 35R11, 
			 49J45, 
			 74C05 
			 }

\keywords{Fractional model of discrete dislocations, Fractional and nonlocal gradients, Riesz potentials, Strain-gradient plasticity, Edge dislocations, $\Gamma$-convergence}


\begin{abstract}
We derive a strain-gradient theory for plasticity as the $\Gamma$-limit of discrete dislocation fractional energies, without the introduction of a core-radius. By using the finite horizon fractional gradient introduced by Bellido, Cueto, and Mora-Corral~\cite{BCMC}, we consider a nonlocal model of semi-discrete dislocations, in which the stored elastic energy is computed via the fractional gradient of order $1-\alpha$. As $\alpha$ goes to $0$, we show that suitably rescaled energies $\Gamma$-converge to the macroscopic strain-gradient model of Garroni, Leoni, and Ponsiglione~\cite{GLP}.
\end{abstract}

\maketitle


\section{Introduction}

The derivation of macroscopic plasticity from dislocation models is crucial for mathematical and mechanical purposes, see, e.g.,~\cite{CeLe05, CoGaMu11, CoGaOr15, dLGaPo12, GLP, GSM, Gi17, Gr97, MuScZe14, Po07,SZ}. A well-established approach starts from a mesoscopic and semi-discrete model where the dislocations are modeled by an additional constraint on the deformation. In the planar setting, this corresponds to the assumption that close to an edge dislocation in position~$x_{0}$ with Burgers vector~$\xi$ the strain field~$\beta$ satisfies 
\begin{equation}\label{e:intro1}
\Curl \beta = \xi \delta_{x_{0}}
\end{equation}
see, e.g.,~\cite{ArOr}. In particular, the presence of dislocations prevents $\beta$ from having a global gradient structure. In this scenario, $\beta$ is referred to as an {\em incompatible strain field}. Away from dislocations, the energetics is of elastic type and $\beta$ locally takes the form of a deformation gradient. 

As noticed, e.g., in~\cite{CeLe05}, the constraint~\eqref{e:intro1} implies that $\beta$ is not square integrable close to the dislocation in $x_{0}$. Therefore, elastic energies with quadratic growth, such as those considered in linearized models, cannot capture the behavior of strains fulfilling~\eqref{e:intro1}. As a remedy, regularizations of the energy have to be considered. The probably most common one in the recent literature is the so called {\em core-radius approach}~\cite{BBS, HirLot}, where the energy is computed on a reference configuration after cutting out an $\varepsilon$-ball around each dislocation. In such a perforated domain, the strain field is $L^2$-integrable, but the curl constraint~\eqref{e:intro1} has no clear meaning anymore, and has to be reformulated as an integral-circulation-type condition. Following this approach, the asymptotic behavior for vanishing core-radius parameter~$\varepsilon$ has been investigated under suitable scaling of the energy, both in the linear~\cite{CDLM, GLP, Gi20} and in the nonlinear setting~\cite{ARS23, CGM23, FPP, Gi17, MuScZe14, SZ}. In the most relevant scaling, where the number of dislocations scales as $|\log \varepsilon|$ (the order of the \emph{self-energy} related to one dislocation), the limit energy accounts for the competition between a bulk elastic energy and a plastic term with linear growth defined on a Radon measure~$\mu$. The latter expresses the density of dislocations and is related to the strain $\beta$ by the identity $\Curl \beta = \mu$. An alternative approach with an energy defined on the whole reference configuration has been proposed, for instance, in~\cite{CoGaOr15}. There, a regularization of the dislocation density $\mu$ by standard mollification with compactly supported mollifiers is considered, resulting in square integrable incompatible fields through the curl constraint on $\beta$. In the limit of vanishing regularization, the same dislocation model as in~\cite{GLP} is recovered.

In the present paper, we take the perspective of considering a \emph{nonlocal model} for dislocations, avoiding the core-radius approach. Hereby, we naturally bridge to nonlocal elasticity models depending on fractional gradients, which have been attracting ever increasing interest in recent years, see e.g.~\cite{BCBM3, BCMC, BCMC2, CKS, KrSch, ShSp1, ShSp2}. In a fractional linear elastic model, the stored energy is expressed as
\begin{equation}
\label{e:intro2}
\frac{1}{2} \int_{\Omega} \mathbb{C} \nabla^{1-\alpha} u : \nabla^{1-\alpha} u \, {\rm d} x\,,
\end{equation}
where $\mathbb{C}$ denotes an elasticity tensor and the fractional gradient $\nabla^{1-\alpha}u$ is defined by convolution with a singular potential of Riesz type. As pointed out in~\cite{CKS}, when dealing with elastic models, it is reasonable to postulate that such a singular potential has an integration domain depending on a ball of size $\rho>0$ while keeping the same singularity as the Riesz one. This corresponds to a \emph{finite horizon} of interaction among the particles, as used for instance in peridynamics~\cite{Silling}, and amounts to consider 
\begin{equation*}
\nabla^{1-\alpha}_{\rho} u\coloneqq \mathcal I^\alpha_\rho ( \nabla u) = Q^{1-\alpha}_{\rho} * \nabla u
\end{equation*}
in place of $\nabla^{1-\alpha} u$ in~\eqref{e:intro2}, for a suitable Riesz-type kernel $\mathcal I^\alpha_\rho$ supported in a ball of size~$\rho$. The precise form of~$Q^{1-\alpha}_{\rho}$ is recalled in~\eqref{eq:Qsd} in Section~\ref{s:preliminaries}.

The above approach naturally extends to incompatible fields satisfying the constraints introduced in~\eqref{e:intro1} if one considers a linear elastic energy on $\mathcal I^\alpha_{\rho_\alpha} \beta$, where the fractional parameter $\alpha$ is close to $0$ and the horizon parameter $\rho_{\alpha}$ has to be suitably scaled with~$\alpha$. At this stage, we notice that the adoption of a fractional regularization allows us to keep the differential constraint~\eqref{e:intro1} in a differential form, without the necessity of passing to a circulation-type condition. Equivalently, one may rewrite the strain energy in terms of the auxiliary variable $\hat{\beta}_{\alpha} = \mathcal I^\alpha_{\rho_\alpha} \beta$, corresponding to a soft regularization of the curl constraint~\eqref{e:intro1} with singular kernels in place of mollifiers, see Remark~\ref{rmk:reformulation} for details. Hence, our result described below generalizes the perspective of mollification in~\cite{CoGaOr15} to the case of singular convolution operators (yet, restricted to two dimensions). To our view, this is both of mathematical interest and relevant for applications to fractional elasticity~\cite{Silling,Silling3,Silling2}. 

The main result of the paper concerns the asymptotic analysis of the model when $\alpha \to 0$ for a suitable behavior of the horizon $\rho_{\alpha}$ and a suitable scaling of the energy. We work under the assumption of well-separated dislocations, also expressed in terms of $\rho_\alpha$, cf.~Section~\ref{s:main} and in particular Remark~\ref{newremark}. The limiting model is obtained in terms of $\Gamma$-convergence. In the most relevant scenario, the so called {\em critical regime} (see Theorem~\ref{t:critical}), we recover the linear model of strain-gradient plasticity derived in~\cite{GLP}: the self-energy term reads as 
\begin{equation*}
\int_{\Omega} \varphi \bigg( \frac{{\rm d}\mu}{{\rm d}|\mu|}\bigg) \, {\rm d}|\mu|,
\end{equation*}
where the convex and $1$-homogeneous function~$\varphi$ is obtained by a relaxation procedure of a suitable cell formula (cf.~\eqref{eq:ad-psi} and~\eqref{eq:ad-hatpsi}). The core of our proof consists in deriving a quadratic bound from below for the cell formula, in the form given in Proposition~\ref{prop:cell-formula} and Remark~\ref{rem:Ia-scaling}. Here, we explicitly exploit the structure of the singular Riesz-type potential to compute the asymptotic behavior of the energy. This is done in two steps: first in Lemma~\ref{lem:lower-est} we estimate the approximate energy in terms of a special field $\eta_\xi$. Then, in Lemmas~\ref{prop:Ia-eta-con}--\ref{lemma 4.7} we make use of the explicit expression of $\eta_\xi$ to derive a formula for the asymptotic limit. Although not necessary for our proof, as a byproduct, by introducing a suitable multiplicative factor in our model, we are also able to show that $\varphi$ exactly coincides with the density in~\cite{GLP}, see Remark~\ref{rem:Ia-scaling}. Hence, on the one hand, we recover the same result as in~\cite{GLP}. On the other hand, the cell formula~\eqref{eq:ad-psi} does not require neither excision of a core-radius nor the modification of the curl constraint~\eqref{e:intro1}, which is kept in its original differential form. As a consequence, for compactness and the derivation of the $\Gamma$-liminf inequality no additional modifications of the strain field~$\beta$ need to be performed in order to extract information from~\eqref{e:intro1}. We also point out that, while the asymptotics for the cell formula can be derived also for the classical Riesz potential $\mathcal I^\alpha$ (see Remark~\ref{rem:classic-Riesz}), the use of the finite horizon $\rho$ is essential for compactness, as explained in Remark~\ref{rem:classic-Riesz2}.

\paragraph*{\bf Outlook}
The results contained in the present work deal with a strain-gradient plasticity model obtained as the limit of a fractional regularization of a linear semi-discrete dislocation model under the well-separateness assumption. The extension of our analysis to more general dislocation distributions (see, e.g.,~\cite{Gi20}) and to nonlinear theories~\cite{Gi17, MuScZe14, SZ} is a natural research line that will be subject to future investigation. Finally, we would like to mention a related but different fractional approach to the emergence of topological defects and Ginzburg-Landau energies that has been recently developed in~\cite{ABSS}. 

\paragraph*{\bf Plan of the paper}
In Section~\ref{s:preliminaries} we recall some basic notation and preliminary facts. We present the fractional dislocation model in Section~\ref{s:main}, together with the statements of the asymptotic results in the critical, subcritical, and supercritical regimes (see Theorems~\ref{t:critical},~\ref{t:subcritical}, and~\ref{t:supercritical}, respectively). Section~\ref{s:cell} is devoted to the derivation of the cell formula for the critical and the subcritical regime. Finally, in Sections~\ref{s:proofs}--\ref{s:proofs2} we discuss the proofs of our main results. 


\section{Preliminaries}\label{s:preliminaries}


\subsection{Notation}
The space of $2\times 2$ matrices with real entries is denoted by $\R^{2\times 2}$. Given two matrices $A_1,A_2\in \R^{2\times 2}$, their scalar product is denoted by $A_1:A_2$, and the induced norm of $A\in\R^{2\times 2}$ by $|A|$. The subspace of symmetric matrices is denoted by $\R^{2\times 2}_{\sym}$ and the subspace of skew-symmetric matrices by $\R^{2\times 2}_{\skw}$. Given $A\in\R^{2\times 2}$, we denote by $A^{\sym}\coloneqq\frac{1}{2}(A+A^T)\in\R^{2\times 2}_{\sym}$ its symmetric part. We use $SO(2)$ to denote the special orthogonal group in $\R^2$, consisting of all matrices $A\in\R^{2\times 2}$ satisfying $A^{-1}=A^T$ and $\det A=1$. The identity matrix is denoted by $I \in \R^{2 \times 2}$. 

For all $r>0$ and $x\in\R^2$, we denote by $B_r(x)$ the open ball of radius $r$ and center $x$. If $x=0$, we simply write $B_r$, and we set $\mathbb S^1\coloneqq \partial B_1$. Given a measurable set $E\subseteq\R^2$, we denote by $|E|$ the $2$-dimensional Lebesgue measure of $E$, and by $\chi_E \colon \R^2 \to \{ 0 ,1 \}$ we denote the corresponding characteristic function. Given $\rho>0$ and a set $E\subseteq\R^2$, we define 
\begin{align}\label{neigh}
E_\rho\coloneqq\{x\in\R^2:\dist(x,E)<\rho\}.
\end{align}
Given an open subset $\Omega$ of $\R^2$, we denote by $\mathcal M(\Omega;\R^k)$ the space of vector-valued Radon measures on $\Omega$. Moreover, the set of all distributions on $\Omega$, namely the continuous dual space of $C_c^\infty (\Omega;\R^k)$, endowed with the strong dual topology, is denoted by $\mathcal D'(\Omega;\R^k)$. We denote by $\mathcal S(\R^2;\R^k)$ the space of Schwartz functions $\phi\colon\R^2\to\R^k$. We adopt standard notation for Lebesgue spaces on measurable subsets $E\subseteq\mathbb R^2$ and Sobolev spaces on open subsets $\Omega\subseteq\mathbb R^2$. According to the context, we use $\|\cdot\|_{L^p(E)}$ to denote the norm in $L^p(E;\R^k)$ for all $1\le p\le\infty$ and $k\in\N$. A similar convention is also used to denote the norms in Sobolev spaces. The boundary values of a Sobolev function are always intended in the sense of traces. Since we often need to consider functions defined in the whole space, when needed we always assume that a function $f\in L^p(E;\R^k)$ is defined on $\R^2$, by setting $f=0$ outside $E$. Given
$$
 \beta=\begin{pmatrix}
 \beta_{11} &\beta_{12}\\
 \beta_{21} &\beta_{22}
\end{pmatrix}\in L^1_{\loc}(\R^2;\R^{2\times 2}),$$
we define the curl in the sense of distributions as
$$
 \Curl\beta\coloneqq 
\begin{pmatrix}
\partial_1\beta_{12}-\partial_2\beta_{11}\\
\partial_1\beta_{22}-\partial_2\beta_{21}
\end{pmatrix}\in\mathcal D'(\R^2;\R^2).$$
Let us define
$$J\coloneqq \begin{pmatrix}
0& -1\\
1& 0
\end{pmatrix}\in SO(2).$$
Then, we have
\begin{align}\label{curlicurl}
\Curl\beta=\Div(\beta J)\quad\text{in $\mathcal D'(\R^2;\R^2)$}.
\end{align}
%

\subsection{A notion of fractional gradient and Riesz potential with finite horizon}

We focus on $\R^2$ and introduce the notion of fractional gradient $\nabla^s_\rho$ and Riesz potential $\mathcal I^\alpha_\rho$ of order $s,\alpha\in (0,1)$, both with finite horizon $\rho>0$. We start by recalling the definition of the usual Riesz potential $\mathcal I^\alpha$ in $\R^2$.

\begin{definition}[Riesz potential]
Let $\alpha\in (0,2)$ and $f\colon\R^2\to\R$ be a measurable function satisfying
\begin{equation}\label{eq:Ia-hyp}
\int_{\R^2}\frac{|f(y)|}{(1+|y|)^{2-\alpha}}\,\de y<\infty.
\end{equation}
The {\it $\alpha$-Riesz potential} of $f$ is the function $\mathcal I^\alpha f\colon\R^2\to\R$ defined as
\begin{equation*}
\mathcal I^\alpha f(x)\coloneqq \frac{1}{\gamma_\alpha}\int_{\R^2} \frac{f(y)}{|x-y|^{2-\alpha}}\,\de y\quad\text{for a.e.\ $x\in\R^2$},\qquad \text{with}\quad \gamma_\alpha\coloneqq \frac{\pi 2^\alpha\Gamma(\frac{\alpha}{2})}{\Gamma(\frac{2-\alpha}{2})},
\end{equation*}
where $\Gamma$ denotes the Gamma function.
\end{definition}

\begin{remark}\label{rem:gamma-a}
$\mathcal I^\alpha f$ is well-defined for all measurable functions $f$ satisfying~\eqref{eq:Ia-hyp}, see Proposition~\ref{prop:Riesz} in the Appendix. Moreover, we have
$$\gamma_1=2\pi,\qquad\lim_{\alpha\to 0}\alpha\gamma_\alpha=2\pi,\qquad \lim_{\alpha\to 2}\frac{\gamma_\alpha}{2-\alpha}=2\pi.$$
\end{remark}

We now define the fractional gradient with finite horizon $\nabla^s_\rho$ for all $s\in (0,1)$ (degree of differentiability) and $\rho>0$ (horizon). Following~\cite{BCMC,BCMC2}, we fix a function $\overline w\colon [0,\infty)\to \R$ which satisfies
\begin{itemize}
\item[$(i)$] $\overline w\in C^\infty([0,\infty))$ and $\supp\overline w\subset [0,1)$,
\item[$(ii)$] $0\le \overline w\le 1$ in $[0,\infty)$ and $\overline w= 1$ on $\left[0,\frac{1}{2}\right]$,
\item[$(iii)$] $\overline w$ is nonincreasing on $[0,\infty)$, i.e., $\overline w(s)\le \overline w(t)$ for all $0\le t\le s$.
\end{itemize}
For all $\rho>0$, we define $\overline w_\rho\colon [0,\infty)\to \R$ and $w_\rho\colon \R^2\to \R$ as
\begin{align}\label{barwdef}
\overline w_\rho(t)\coloneqq \overline w\left(\frac{t}{\rho}\right)\quad\text{for all $t\in[0,\infty)$},\qquad w_\rho(x)\coloneqq \overline w_\rho(|x|)\quad\text{for all $x\in\R^2$}.
\end{align}

\begin{definition}[Fractional gradient with finite horizon]
Let $s\in (0,1)$, $\rho>0$, and $\phi\in C^\infty(\R^2)$. The {\it $s$-fractional gradient with finite horizon $\rho$} of $\phi$ is the function $\nabla_\rho^s \phi\colon \R^2\to\R^2$ defined as
$$\nabla_\rho^s\phi(x)\coloneqq \frac{1+s}{\gamma_{1-s}}\int_{\R^2}\frac{\phi(x)-\phi(y)}{|x-y|}\frac{x-y}{|x-y|}\frac{ w_\rho(x-y)}{|x-y|^{1+s}}\,\de y\quad\text{for all $x\in\R^2$}.$$
\end{definition}

\begin{remark}
The function $\nabla^s_\rho \phi$ is well-defined in $\R^2$ and
$\supp(\nabla^s_\rho\phi) \subset(\supp \phi)_\rho$ (recall~\eqref{neigh}). Moreover, if $\phi\in C_c^\infty(\R^2)$ and if we replace the function $w_\rho$ with $1$ in the definition above (which, roughly speaking, corresponds to the case $\rho=\infty$), we obtain the Riesz $s$-fractional gradient $$\nabla^s\phi(x)\coloneqq \frac{1+s}{\gamma_{1-s}}\int_{\R^2}\frac{\phi(x)-\phi(y)}{|x-y|}\frac{x-y}{|x-y|}\frac{1}{|x-y|^{1+s}}\,\de y\quad\text{for all $x\in\R^2$}.$$
We refer to~\cite{CS,ShSp1,ShSp2} for detailed literature on this operator. 
\end{remark}

We recall that the Riesz $s$-fractional gradient $\nabla^s$ can be written in terms of a local gradient via the $(1-s)$-Riesz potential. More precisely, for all $s\in (0,1)$ and $\phi\in C_c^\infty(\R^2)$ we have 
\begin{equation}\label{eq:Riesz-gradient}
\nabla^s\phi=\nabla(\mathcal I^{1-s}\phi)=\mathcal I^{1-s}(\nabla\phi),
\end{equation}
see~\cite[Proposition~2.2]{CS} and~\cite[Theorem~1.2]{ShSp1}. A similar property also applies to $\nabla^s_\rho$, as shown in~\cite{BCMC,BCMC2,CKS}. To formulate this result, we need some additional notation. For all $s\in (0,1)$ and $\rho>0$, we define the functions $\overline q^s_\rho\colon [0,\infty)\to\R$ and $q^s_\rho\colon\R^2\to \R$ as
$$\overline q^s_\rho(t)\coloneqq(1+s)t^{1+s}\int_t^\infty \frac{\overline w_\rho(r)}{r^{2+s}}\,\de r\quad\text{for all $t\in[0,\infty)$},\qquad q^s_\rho(x)\coloneqq \overline q^s_\rho(|x|)\quad\text{for all $x\in\R^2$}.$$
Moreover, we set $Q^s_\rho\colon\R^2\setminus\{0\}\to [0,\infty)$ as
\begin{equation}\label{eq:Qsd}
Q^s_\rho(x)\coloneqq \frac{1}{\gamma_{1-s}}\frac{1}{|x|^{1+s}}q^s_\rho(x)=\frac{1+s}{\gamma_{1-s}}\int_{|x|}^\infty\frac{\overline w_\rho(r)}{r^{2+s}}\,\de r\quad\text{for all $x\in\R^2\setminus\{0\}$}.
\end{equation}
By~\cite[Lemma~4.2]{BCMC2}, for all $s\in (0,1)$ and $\rho>0$, we have
\begin{equation}
Q^s_\rho\in C^\infty(\R^2\setminus\{0\})\cap L^1(\R^2),\qquad\supp (Q^s_\rho)\subset B_\rho.\label{eq:Qsd-p}
\end{equation}

We have the following relation between the fractional gradient with finite horizon $\nabla^s_\rho$ and the (standard) gradient $\nabla$. 

\begin{proposition}[{\cite[Proposition~1 and Equation~(2.13)]{CKS}}]\label{prop:Psd}
Let $s\in (0,1)$ and $\rho>0$. 
\begin{itemize}
\item[$(i)$] For all $\phi\in C^\infty(\R^2)$ we have $Q_\rho^s \phi\in C^\infty(\R^2)$ and 
\begin{equation}\label{eq:nablasd-Isd}
\nabla(Q^s_\rho*\phi)=Q^s_\rho*\nabla \phi=\nabla_\rho^s \phi\quad\text{in $\R^2$}. 
\end{equation}
In particular, if $\phi\in\mathcal S(\R^2)$, then $\nabla_\rho^s \phi\in \mathcal S(\R^2)$, and, if $\phi\in C^\infty_c(\R^2)$, then $\nabla_\rho^s \phi\in C_c^\infty(\R^2)$. 
\item[$(ii)$] There exists a linear operator $P^s_\rho\colon\mathcal S(\R^2)\to\mathcal S(\R^2)$ such that for all $\phi\in \mathcal S(\R^2)$
$$P^s_\rho(Q^s_\rho*\phi)=Q_\rho^s*(P^s_\rho \phi)=\phi\quad\text{in $\R^2$}.$$
\end{itemize}
\end{proposition}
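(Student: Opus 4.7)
For part~(i), the first equality $\nabla(Q^s_\rho * \phi) = Q^s_\rho * \nabla \phi$ follows from standard differentiation under the integral sign, using that $Q^s_\rho \in L^1(\R^2)$ with compact support in $\overline B_\rho$ by~\eqref{eq:Qsd-p}. For the key identity $Q^s_\rho * \nabla \phi = \nabla_\rho^s \phi$, my plan is to excise a small ball $B_\epsilon(x)$ and integrate by parts. A direct computation from~\eqref{eq:Qsd} gives $\nabla Q^s_\rho(z) = -\frac{1+s}{\gamma_{1-s}}\, \overline w_\rho(|z|)\, z/|z|^{3+s}$ for $z\neq 0$. Since $Q^s_\rho(z) \sim c\,|z|^{-1-s}$ near the origin, a Taylor expansion of $\phi$ on $\partial B_\epsilon(x)$ together with $\int_{\mathbb S^1} \omega \,\de\mathcal H^1 = 0$ shows the boundary term is of order $\epsilon^{1-s} \to 0$ as $\epsilon \to 0$ (using $s<1$). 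The remaining bulk integral can be rewritten, exploiting the symmetry $\int_{\R^2 \setminus B_\epsilon} \nabla Q^s_\rho \,\de z = 0$, as $\int_{\R^2 \setminus B_\epsilon} \nabla Q^s_\rho(z)[\phi(x-z) - \phi(x)]\,\de z$, whose integrand is of order $|z|^{-s}$ near zero and thus absolutely integrable. Passing to the limit and changing variables $y = x-z$ yields exactly the integral defining $\nabla_\rho^s \phi(x)$.

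For the regularity statements, since $\supp Q^s_\rho \subset \overline B_\rho$, the inclusion $\supp(Q^s_\rho * \phi) \subset \supp \phi + \overline B_\rho$ immediately gives $Q^s_\rho * \phi \in C^\infty_c(\R^2)$ whenever $\phi \in C^\infty_c(\R^2)$. For $\phi \in \mathcal S(\R^2)$, smoothness is immediate and the rapid decay of all derivatives of $Q^s_\rho * \phi$ follows from writing $(Q^s_\rho * \phi)(x) = \int_{B_\rho} Q^s_\rho(z) \phi(x-z)\,\de z$ and using $|x-z| \ge |x| - \rho$ on $B_\rho$ together with the Schwartz decay of $\phi$. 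Applying this to $\nabla \phi$ in place of $\phi$ gives the conclusion for $\nabla_\rho^s \phi$.

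For part~(ii), my plan is to construct $P^s_\rho$ as a Fourier multiplier by setting $\widehat{P^s_\rho \phi}(\xi) \coloneqq \hat\phi(\xi)/\widehat{Q^s_\rho}(\xi)$. The radial, nonnegative, compactly supported kernel $Q^s_\rho$ has a real-analytic Fourier transform; since $Q^s_\rho(z) \sim \frac{1}{\gamma_{1-s}|z|^{1+s}}$ near $0$, its local behavior coincides with the classical Riesz kernel of order $1-s$, so one expects $\widehat{Q^s_\rho}(\xi) \sim c\,|\xi|^{s-1}$ as $|\xi| \to \infty$, with $w_\rho$ contributing only lower-order corrections, while $\widehat{Q^s_\rho}$ is bounded and strictly positive at the origin. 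Consequently $1/\widehat{Q^s_\rho}$ is smooth on $\R^2$ and grows polynomially at infinity, so multiplication by it preserves $\mathcal S(\R^2)$. The inversion identity $P^s_\rho(Q^s_\rho * \phi) = Q^s_\rho * (P^s_\rho \phi) = \phi$ is then immediate from the convolution theorem.

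The main obstacle, in my view, lies in~(ii): producing sufficiently precise bounds on all derivatives of $1/\widehat{Q^s_\rho}$ — equivalently, on the asymptotics of $\widehat{Q^s_\rho}$ and its derivatives at infinity — so as to verify rigorously that $P^s_\rho$ maps $\mathcal S(\R^2)$ into itself. By contrast, part~(i) ultimately reduces to an integration by parts and the explicit symmetries of the Riesz-type kernel. These sharper Fourier-analytic estimates are precisely what the construction in~\cite{CKS} provides.
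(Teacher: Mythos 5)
This proposition is quoted in the paper directly from \cite{CKS} (Proposition~1 and Equation~(2.13)); the paper itself contains no proof, so there is no internal argument to compare against.

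Your treatment of part~$(i)$ is correct: the excision-of-a-ball computation, the observation that $\int_{\R^2\setminus B_\epsilon}\nabla Q^s_\rho\,\de z=0$ (by radial symmetry and the divergence theorem), and the $O(\epsilon^{1-s})$ estimate on the boundary term all go through. This is essentially the standard way to reconcile the convolutional and the singular-integral forms of a fractional gradient.

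In part~$(ii)$, the Fourier-multiplier approach is indeed the one used in~\cite{CKS}, and you correctly identify that the Schwartz-preservation hinges on derivative bounds for $1/\widehat{Q^s_\rho}$. But there is a gap prior to that: you assert that $1/\widehat{Q^s_\rho}$ is ``smooth on $\R^2$'' after only discussing $\widehat{Q^s_\rho}$ at $\xi=0$ (where it equals $\|Q^s_\rho\|_{L^1}>0$) and its decay rate $|\xi|^{s-1}$ as $|\xi|\to\infty$. Neither fact precludes $\widehat{Q^s_\rho}$ from vanishing at intermediate frequencies, and for nonnegative compactly supported radial kernels this is a real danger --- the Fourier transform of $\chi_{B_1}$, a Bessel-type function, has infinitely many zeros. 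Establishing the strict positivity of $\widehat{Q^s_\rho}$ on all of $\R^2$ is a substantive step in the argument of~\cite{CKS}, and it is logically prior to the asymptotic derivative estimates you flag as the ``main obstacle.'' Without it, the operator $P^s_\rho$ is not even well defined. So the outline is on the right track, but you should separate and prominently address the nonvanishing of $\widehat{Q^s_\rho}$ before turning to the polynomial growth of $1/\widehat{Q^s_\rho}$ and its derivatives.
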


For more details on the operator $P_\rho^s$, we refer to~\cite{CKS}. We only point out that in the limit case $\rho=\infty$, $P_\rho^s$ is nothing else but the $\frac{1-s}{2}$-fractional Laplacian. 

Based on~\eqref{eq:Riesz-gradient} and Proposition~\ref{prop:Psd}, we introduce the Riesz potential with finite horizon. 

\begin{definition}[Riesz potential with finite horizon]\label{rieszhor}
Let $\alpha\in (0,1)$, $\rho>0$, and $f\in L^1_{\loc}(\R^2)$. The {\it $\alpha$-Riesz potential with finite horizon $\rho$} of $f$ is the function $\mathcal I^\alpha_\rho f\colon\R^2\to\R$ defined as
$$
\mathcal I^\alpha_\rho f(x)\coloneqq (Q^{1-\alpha}_\rho*f)(x)=\int_{B_\rho(x)}f(y)Q_\rho^{1-\alpha}(x-y)\,\de y\quad\text{for a.e.\ $x\in\R^2$}.
$$
\end{definition}

\begin{remark}
By~\eqref{eq:Qsd-p} the function $Q_\rho^{1-\alpha}$ is well-defined for all $\alpha\in (0,1)$ and it belongs to $L^1(\R^2)$. Therefore, $\mathcal I^\alpha_\rho f$ is well-defined a.e.\ in $\R^2$, see Lemma~\ref{lem:Iad-int}. We remark that $\mathcal I^\alpha_\rho$ can actually be defined for $\alpha \in (0,2)$ as~\eqref{eq:Qsd}--\eqref{eq:Qsd-p} are well-defined for $s \in (-1,1)$. Moreover, the definition of the Riesz potential with finite horizon is consistent with the classical one. Indeed, if $f\colon \R^2 \to\R$ satisfies~\eqref{eq:Ia-hyp} and we replace the function $\overline{w}_\rho$ in~\eqref{eq:Qsd} by $1$, we obtain the Riesz potential $\mathcal I^\alpha$. 
\end{remark}

For more information about the fractional gradient $\nabla^s_\rho$ and the Riesz potential $\mathcal I^\alpha_\rho$ with finite horizon $\rho>0$, we refer to~\cite{BCMC,BCMC2,CKS} and the Appendix below.


\section{Main Problem}\label{s:main}


In this section we introduce our model of semi-discrete dislocations and present the main results. 

\subsection{The model}

 Let $\Omega\subset\R^2$ be an open, bounded, simply connected set with Lipschitz boundary. We fix two linearly independent vectors $b_1,b_2\in\mathbb S^1$, and we define
$$\mathbb S\coloneqq\spn_{\mathbb Z}\{b_1,b_2\}.$$
Let $(\rho_\alpha)_{\alpha\in (0,1)}\subset (0,1)$ (horizons) and $(N_\alpha)_{\alpha\in (0,1)}\subset\N$ (numbers of dislocations) be such that 
\begin{align}
&\rho_\alpha\to 0\quad\text{as $\alpha\to0$},& &\alpha\log\rho_\alpha\to 0\quad\text{as $\alpha\to0$},\label{eq:a-delta}\\
&N_\alpha\to\infty\quad\text{as $\alpha\to 0$},& &N_\alpha\rho_\alpha^2\to 0\quad\text{as $\alpha\to 0$}.\label{eq:Na-delta}
\end{align} 
Assumptions~\eqref{eq:Na-delta} model the fact that the number of dislocations tends to infinity but the area of the region where the singular integral interacts with the dislocations asymptotically vanishes. 
On the other hand, assumption~\eqref{eq:a-delta} prevents $\rho_\alpha$ from going to zero too quickly, so that the Riesz potential $\mathcal I^\alpha_{\rho_\alpha}$ can keep track of the information due to the dislocations as $\alpha\to 0$ (see also Corollary~\ref{coro:Iad-conv} in the Appendix). 
For all $\alpha\in (0,1)$, we define
\begin{align*}
\mathcal X_\alpha\coloneqq\left\{\sum_{i=1}^M\xi_i\delta_{x_i}\in\mathcal M(\R^2;\R^2):M\in\mathbb N,\,\xi_i\in \mathbb S\setminus\{0\},\,B_{\rho_\alpha}(x_i)\subset\Omega,\,|x_i-x_k|\ge 2\rho_\alpha\text{ for $i\neq k$}\right\},
\end{align*}
and, for a given $\mu\in \mathcal X_\alpha$, we set
\begin{align*}
\mathcal A_\alpha(\mu)\coloneqq &\{\beta\in L^1(\Omega_{\rho_\alpha};\R^{2\times 2})\,:\,\Curl\beta=\mu\text{ in $\mathcal D'(\Omega_{\rho_\alpha};\R^2)$}\}.
\end{align*}
Due to the nonlocality induced by $\mathcal I^\alpha_{\rho_\alpha}$, we assume here well-separated dislocations, namely that the distance between any pair of dislocation points is at least $2\rho_\alpha$ and that $\beta$ is defined in the neighborhood $\Omega_{\rho_\alpha}$ of $\Omega$, see~\eqref{neigh}. Let $\C\colon \R^{2\times 2}\to \R^{2\times 2}$ be a fourth-order elasticity tensor, satisfying the following assumptions:
\begin{itemize}
\item[(C1)] $\C F=\C F^{\sym}\in\R^{2\times 2}_{\sym}$ for all $F\in\R^{2\times 2}$;
\item[(C2)] $\C F_1:F_2=F_1:\C F_2$ for all $F_1,F_2\in\R^{2\times 2}$;
\item[(C3)] there exist $0<\nu_1\le \nu_2$ such that 
$$\nu_1|F^{\sym}|^2\le \C F:F\le \nu_2|F^{\sym}|^2\quad\text{for all $F\in \R^{2\times 2}$}.$$
\end{itemize}
Given $\mu\in\mathcal X_\alpha$ and $\beta\in\mathcal A_\alpha(\mu)$, we define the energy
\begin{equation}\label{eq:Ea}
\mathcal E_\alpha(\mu,\beta)\coloneqq \frac{1}{2}\int_\Omega\C\mathcal I^\alpha_{\rho_\alpha}\beta(x):\mathcal I^\alpha_{\rho_\alpha}\beta(x)\,\de x.
\end{equation}

\begin{remark}[Comparison to core-radius approach and elastic model]\label{newremark}

(i) When the fractional order $\alpha$ depends asymptotically on an \emph{atomic scale} $\varepsilon$ by
\begin{equation}\label{epsalpha}
\alpha(\varepsilon)\sim\frac{1}{|\log\varepsilon|}\quad\text{as $\varepsilon\to 0$},
\end{equation}
our model is closely related to the one in~\cite{GLP} based on standard gradients. In particular, in our setting the horizon $\rho_\alpha$ plays the same role of the hard core-radius $\rho_\varepsilon$ in~\cite{GLP}. The assumption~\cite[Section~2.1(i)]{GLP} corresponds to~\eqref{eq:a-delta} (see also the equivalent formulation (i')) and assumption~\cite[Section~2.1(ii)]{GLP} is exactly~\eqref{eq:Na-delta}.

(ii) As we detail below in Remark~\ref{rem:zeta}(ii), for all $\mu\in\mathcal X_\alpha$ there exists $\zeta\in\mathcal A_\alpha(\mu)$ such that $\mathcal E_\alpha(\mu,\zeta)<\infty$. This is a key difference to the model with standard gradients, where in presence of dislocations the energy is always infinite, unless the elastic energy is restricted to the domain outside the so-called core region surrounding the dislocations. 

(iii) If we consider the special case $\mu=0$ and a regular function $\beta\in\mathcal A_\alpha(0)$, then $\beta=\nabla v$ in $\Omega_{\rho_\alpha}$ for some regular function $v$. In this case, thanks to~\eqref{eq:nablasd-Isd}, the energy~\eqref{eq:Ea} reduces to
\begin{equation}\label{eq:Ea2}
\mathcal E_\alpha(0,\beta)=\mathcal E_\alpha(0,\nabla v)=\frac{1}{2}\int_\Omega\C\nabla_{\rho_\alpha}^{1-\alpha} v(x):\nabla_{\rho_\alpha}^{1-\alpha} v(x)\,\de x,
\end{equation}
which can be interpreted as the analogue of a linearized elasticity model in the context of nonlocal gradients with finite horizon. In this sense,~\eqref{eq:Ea} is the generalization of~\eqref{eq:Ea2} to the case of incompatible fields.
\end{remark}

\begin{remark}[Energy regimes]\label{rem:scaling} 
Note that $N_\alpha$ does not enter explicitly in the model, but it plays a key role for the different scaling regimes that we now discuss. The number $N_\alpha$ represents the typical number of dislocations, i.e., in the above model we expect to have measures of the form $\mu=\sum_{i=1}^{N_\alpha}\xi_i\delta_{x_i}\in \mathcal X_\alpha$. The Riesz potential with finite horizon $\mathcal I^\alpha_{\rho_\alpha}$ introduces a nonlocality of length scale $\rho_\alpha$ into the model. Similarly to~\cite{GLP}, this motivates to decompose the energy $\mathcal E_\alpha(\mu,\beta)$ for $\beta\in \mathcal A_\alpha(\mu)$ into the sum of two terms: the \emph{self-energy}
$$
\mathcal E_\alpha^{\rm self}(\mu,\beta)\coloneqq\frac{1}{2}\int_{\bigcup_{i=1}^{N_\alpha} B_{\rho_\alpha}(x_i)}\C\mathcal I^\alpha_{\rho_\alpha}\beta(x):\mathcal I^\alpha_{\rho_\alpha}\beta(x)\,\de x,
$$
and the \emph{interaction energy}
$$
\mathcal E_\alpha^{\rm inter}(\mu,\beta)\coloneqq\frac{1}{2}\int_{\Omega\setminus \bigcup_{i=1}^{N_\alpha} B_{\rho_\alpha}(x_i)}\C\mathcal I^\alpha_{\rho_\alpha}\beta(x):\mathcal I^\alpha_{\rho_\alpha}\beta(x)\,\de x.
$$ 
As shown in Proposition~\ref{prop:cell-formula}, for $\mu\coloneqq\xi\delta_0$, we have
\begin{align*}
\inf_{\beta\in \mathcal A_\alpha(\mu)}\frac{1}{2}\int_{B_{\rho_\alpha}}\C\mathcal I^\alpha_{\rho_\alpha}\beta(x):\mathcal I^\alpha_{\rho_\alpha}\beta(x)\,\de x\sim \frac{1}{\alpha}\quad\text{as $\alpha\to 0$}.
\end{align*}
Therefore, we expect that 
$$
\mathcal E_\alpha^{\rm self}(\mu,\beta)\sim \frac{N_\alpha}{\alpha}\quad\text{as $\alpha\to 0$}.
$$
Moreover, since the interaction energy is defined outside the singularities, we expect its behavior to be the same as in the core-radius model, i.e.,
$$
\mathcal E_\alpha^{\rm inter}(\mu,\beta)\sim N_\alpha^2\quad\text{as $\alpha\to 0$},
$$
see~\cite[Section~2.2]{GLP}. Therefore, depending on the regime $N_\alpha\ll \frac{1}{\alpha}$, $N_\alpha\gg \frac{1}{\alpha}$, or $N_\alpha\sim \frac{1}{\alpha}$ as $\alpha\to 0$, the self-energy is dominant (subcritical regime), the interaction energy is dominant (supercritical regime), or they are both of the same order (critical regime). With the choice in~\eqref{epsalpha}, we recover the same scaling factors and regimes of~\cite{GLP}. 
\end{remark}

Motivated by Remark~\ref{rem:scaling}, we consider three regimes for the behavior of $N_\alpha$ with respect to $\alpha\to 0$. We define $\mathcal F_\alpha, \mathcal F^{\rm sub}_\alpha, \mathcal F^{\rm super}_\alpha\colon \mathcal M(\R^2;\R^2)\times L^1_{\loc}(\R^2;\R^{2\times 2})\to [0,\infty]$ as 
\begin{itemize}
\item[(1)] {\it Critical regime} ($N_\alpha\sim \frac{1}{\alpha}$ as $\alpha\to 0$): 
\begin{equation}\label{Falphacrit}
 \mathcal F_\alpha(\mu,\beta)\coloneqq 
\begin{cases}
(2\alpha)^2\mathcal E_\alpha(\mu,\beta)&\text{if $\mu\in\mathcal X_\alpha$ and $\beta\in\mathcal A_\alpha(\mu)$},\\
\infty&\text{otherwise}.
\end{cases}
\end{equation} 
\item[(2)] {\it Subcritical regime} ($N_\alpha\ll \frac{1}{\alpha}$ as $\alpha\to 0$): 
\begin{equation}\label{Falphasub}
\mathcal F^{\rm sub}_\alpha(\mu,\beta)\coloneqq 
\begin{cases}
\frac{2\alpha}{N_\alpha}\mathcal E_\alpha(\mu,\beta)&\text{if $\mu\in\mathcal X_\alpha$ and $\beta\in\mathcal A_\alpha(\mu)$},\\
\infty&\text{otherwise}.
\end{cases}
\end{equation}
\item[(3)] {\it Supercritical regime} ($N_\alpha\gg\frac{1}{\alpha}$ as $\alpha\to 0$):
\begin{equation}\label{Falphasuper}
 \mathcal F^{\rm super}_\alpha(\mu,\beta)\coloneqq 
\begin{cases}
\frac{1}{N_\alpha^2}\mathcal E_\alpha(\mu,\beta)&\text{if $\mu\in\mathcal X_\alpha$ and $\beta\in\mathcal A_\alpha(\mu)$},\\
\infty&\text{otherwise}.
\end{cases}
\end{equation}
\end{itemize}
Our goal is to study the $\Gamma$-limit of $\mathcal F_\alpha$, $\mathcal F^{\rm sub}_\alpha$, and $\mathcal F^{\rm super}_\alpha$. In order to recover the exact same limits as in~\cite{GLP}, the above energies are rescaled accordingly with prefactors $2$, see Remark~\ref{a new remark}(ii) below.

 To formulate the asymptotic energy, we need to introduce the {\it self-energy density}. First, let $\xi\in\R^2$, $\alpha\in (0,1)$, and $\rho\in (0,1)$ be fixed. We set
\begin{align}\label{Axirho}
\mathcal A(\xi,\rho)\coloneqq \left\{\beta\in L^1(B_{2\rho};\R^{2\times 2}): \Curl\beta=\xi\delta_0\text{ in $\mathcal D'(B_{2\rho};\R^2)$}\right\},
\end{align}
and, since $\mathcal A(\xi,\rho)\neq \emptyset$ by Remark~\ref{rem:zeta}(i) below, we can consider the infimum problem 
\begin{equation}\label{eq:ad-psi}
\Psi(\xi,\alpha,\rho)\coloneqq\inf_{\beta\in\mathcal A(\xi,\rho)}\frac{1}{2}\int_{B_\rho}\C\mathcal I^\alpha_\rho\beta(x):\mathcal I^\alpha_\rho\beta(x)\,\de x.
\end{equation}
Then, for fixed $\rho\in (0,1)$ we define the asymptotic energy $\psi\colon\R^2\to [0,\infty)$ as
\begin{align}\label{eq:psi}
\psi(\xi)&\coloneqq \lim_{\alpha\to 0}2\alpha\Psi (\xi,\alpha,\rho)\quad\text{for all $\xi\in\R^2$}.
\end{align}
In Section~\ref{s:cell} below, see~\eqref{eq:unif-conv2}, we will show that $\psi$ is well-defined, i.e., the limit in~\eqref{eq:psi} exists and it is independent of $\rho\in (0,1)$. We will also get that 
\begin{align}\label{eq:psi-prop-neu}
\psi(\xi)\ge c|\xi|^2 
\end{align}
for some $c>0$, see~\eqref{eq:psi-prop} below. In particular, this implies that $\Psi(\xi,\alpha,\rho)$ scales like $\alpha^{-1}$, see the scaling discussed in Remark~\ref{rem:scaling} and also Proposition~\ref{prop:cell-formula} below. Finally, we define the self-energy density $\varphi\colon\R^2\to [0,\infty)$ through the following relaxation procedure
\begin{equation}\label{eq:varphi}
\varphi(\xi)\coloneqq\inf\left\{\sum_{k=1}^M\lambda_k\psi(\xi_k)\,:\,\sum_{k=1}^M\lambda_k\xi_k=\xi,\,M\in\N,\,\lambda_k\ge 0,\,\xi_k\in\mathbb S\right\}\quad\text{for all $\xi\in\R^2$}.
\end{equation}
\begin{remark}\label{a new remark} 
(i) As in~\cite{GLP}, the {\it self-energy density} $\varphi$ is a positively 1-homogeneous convex function, and it depends only on the elasticity tensor $\mathbb C$ and the class of admissible Burgers vectors $\{b_1,b_2\}$. By~\eqref{eq:psi-prop-neu}, the infimum is actually a minimum. 

(ii) In view of the particular choice of $2\alpha$ in~\eqref{eq:psi}, we have that $\varphi$ coincides with the self-energy of the core-radius model, see Proposition~\ref{prop:cell-formula} and Remark~\ref{rem:Ia-scaling} below. This is the reason why in the definition of $\mathcal F^{\rm sub}_\alpha$ the scaling factor is chosen as $2\alpha$. Eventually, the prefactor $(2\alpha)^2$ for $\mathcal F_\alpha$ ensures that all three energies $\mathcal F_\alpha$, $\mathcal F^{\rm sub}_\alpha$, $\mathcal F^{\rm super}_\alpha$ are consistent for $N_\alpha = \frac{1}{2\alpha}$. 
\end{remark}

\subsection{Main results}
From now on, we consider a fixed sequence $(\alpha_j)_j$ converging to $0$. For notational convience, we write $(\rho_j)_j$ for the corresponding sequence of horizons $(\rho_{\alpha_j})_j$ and similarly we write $(N_j)_j$ in place of $(N_{\alpha_j})_{j}$. Then~\eqref{eq:a-delta} and~\eqref{eq:Na-delta} read as
\begin{align}\label{all convergences}
\rho_j\to 0, \quad \quad N_j\to\infty, \quad \quad \alpha_j\log\rho_j\to 0, \qquad N_j\rho_j^2\to 0\qquad \text{as $j\to\infty$}.\end{align}
In the critical regime, the limit energy $\mathcal F\colon\mathcal M(\Omega;\R^2)\times L^2(\Omega;\R^{2\times 2})\to [0,\infty]$ takes the form
\begin{equation}\label{eq:F-limit}
\mathcal F(\mu,\beta)\coloneqq
\begin{cases}
\displaystyle\frac{1}{2}\int_\Omega\C\beta(x):\beta(x)\,\de x+\int_\Omega\varphi\left(\frac{\de\mu}{\de|\mu|}\right)\de|\mu| &\text{if $\Curl\beta=\mu$},\\
\displaystyle\infty&\text{otherwise},
\end{cases}
\end{equation}
and our main $\Gamma$-convergence result is the following.
\begin{theorem}[Critical regime]\label{t:critical}
Let $(\alpha_j)_j\subset (0,1)$ and $(\rho_j)_j \subset (0,1)$ be satisfying~\eqref{all convergences} for the choice 
\begin{align}
N_j = \frac{1}{2\alpha_j}. \label{eq:a-da-j-1}
\end{align} 
\begin{itemize}
\item[$(i)$] {\bf Compactness}. Let $(\mu_j,\beta_j)_j\subset\mathcal M(\R^2;\R^2)\times L^1_{\loc}(\R^2;\R^{2\times 2})$ be such that 
\begin{equation}\label{eq:Fa-bound}
 \sup_{j\in\N} \mathcal F_{\alpha_j}(\mu_j,\beta_j)<\infty.
\end{equation}
There exist a (not relabeled) subsequence of $(\alpha_j)_j$, a sequence of skew-symmetric matrices $(S_j)_j\subset \R^{2\times 2}_{\skw}$, and $(\mu,\beta)\in (\mathcal M(\Omega;\R^2)\cap H^{-1}(\Omega;\R^2))\times L^2(\Omega;\R^{2\times 2})$ with $\Curl\beta=\mu$ in $\mathcal D'(\Omega;\R^2)$ such that 
\begin{align}
2\alpha_j\mu_j\xrightharpoonup{*}\mu&\quad\text{in $\mathcal M(\Omega;\R^2)$ as $j\to\infty$},\label{eq:con-cri-1}\\
2\alpha_j(\mathcal I^{\alpha_j}_{\rho_j}\beta_j-S_j)\rightharpoonup \beta&\quad\text{in $L^2(\Omega;\R^{2\times 2})$ as $j\to\infty$}.\label{eq:con-cri-2}
\end{align}

\item[$(ii)$] {\bf $\Gamma$-liminf inequality}. Let $(\mu_j,\beta_j,S_j)_j\subset\mathcal M(\R^2;\R^2)\times L^1_{\loc}(\R^2;\R^{2\times 2})\times\R^{2\times 2}_{\skw}$ and $(\mu,\beta)\in\mathcal M(\Omega;\R^2)\times L^2(\Omega;\R^{2\times 2})$ be satisfying~\eqref{eq:con-cri-1}--\eqref{eq:con-cri-2}. Then,
\begin{align}\label{eq:gamma-liminf}
\liminf_{j\to\infty}\mathcal F_{\alpha_j}(\mu_j,\beta_j)\ge \mathcal F(\mu,\beta).
\end{align}

\item[$(iii)$] {\bf $\Gamma$-limsup inequality}. For all $(\mu,\beta)\in (\mathcal M(\Omega;\R^2)\cap H^{-1}(\Omega;\R^2))\times L^2(\Omega;\R^{2\times 2})$ with $\Curl\beta=\mu$ in $\mathcal D'(\Omega;\R^2)$ there is a sequence $(\mu_j,\beta_j)_j\subset\mathcal M(\R^2;\R^2)\times L^1_{\loc}(\R^2;\R^{2\times 2})$ satisfying $(\mu_j,\beta_j)\in\mathcal X_{\alpha_j}\times \mathcal A_{\alpha_j}(\mu_j)$ for all $j\in\N$,~\eqref{eq:con-cri-1}--\eqref{eq:con-cri-2} $($with $S_j=0)$, and
\begin{equation}\label{eq:gamma-limsup}
\limsup_{j\to\infty}\mathcal F_{\alpha_j}(\mu_j,\beta_j)\le \mathcal F(\mu,\beta).
\end{equation}
\end{itemize}
\end{theorem}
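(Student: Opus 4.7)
The plan is to follow the classical strain-gradient plasticity blueprint of~\cite{GLP}, with the main novelty being that the core-radius excision is replaced by the nonlocal fractional regularisation $\mathcal I^{\alpha_j}_{\rho_j}$. Following Remark~\ref{rem:scaling}, I would decompose the rescaled energy $\mathcal F_{\alpha_j}(\mu_j,\beta_j)$ into a self-energy supported on $\bigcup_i B_{\rho_j}(x_i^j)$ and an interaction energy on the complementary set $\Omega\setminus\bigcup_i B_{\rho_j}(x_i^j)$. The critical scaling $N_j=\frac{1}{2\alpha_j}$ together with the prefactor $(2\alpha_j)^2$ is tuned precisely so that, in the limit, the self-energy contributes the plastic term $\int_\Omega\varphi(\frac{\de\mu}{\de|\mu|})\,\de|\mu|$ and the interaction energy contributes the elastic term $\frac{1}{2}\int_\Omega\C\beta:\beta\,\de x$.

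For the compactness statement~$(i)$, I would first use~(C3) and~\eqref{eq:Fa-bound} to bound $\|(\mathcal I^{\alpha_j}_{\rho_j}\beta_j)^{\sym}\|_{L^2(\Omega)}\leq C/\alpha_j$. Since $\mathcal I^{\alpha_j}_{\rho_j}$ is a convolution, curl commutes with it and gives $\Curl\mathcal I^{\alpha_j}_{\rho_j}\beta_j=\mathcal I^{\alpha_j}_{\rho_j}\mu_j$, and the quadratic lower bound $\Psi(\xi,\alpha_j,\rho_j)\geq c|\xi|^2/\alpha_j$ of Proposition~\ref{prop:cell-formula}, applied on each ball $B_{\rho_j}(x_i^j)$ (which is licit because the well-separateness $|x_i^j-x_k^j|\geq 2\rho_j$ together with $\supp Q^{1-\alpha_j}_{\rho_j}\subset B_{\rho_j}$ decouples distinct dislocations), yields $|2\alpha_j\mu_j|(\Omega)\leq C$. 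A generalised Korn inequality applied on a slightly smaller subdomain then allows one to subtract a skew-symmetric matrix $S_j\in\R^{2\times 2}_{\skw}$ and extract a weak $L^2$-limit $\beta$ of $2\alpha_j(\mathcal I^{\alpha_j}_{\rho_j}\beta_j-S_j)$. Passing to the limit in $\Curl(2\alpha_j\mathcal I^{\alpha_j}_{\rho_j}\beta_j)=2\alpha_j\mathcal I^{\alpha_j}_{\rho_j}\mu_j$, and invoking approximation properties of the finite-horizon Riesz potential (cf.~Corollary~\ref{coro:Iad-conv}), one identifies $\Curl\beta=\mu$ and~\eqref{eq:con-cri-1}.

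For the $\Gamma$-liminf~$(ii)$, the interaction contribution on $\Omega\setminus\bigcup_i B_{\rho_j}(x_i^j)$ follows from weak-$L^2$ lower semicontinuity of the convex quadratic $F\mapsto\frac{1}{2}\C F:F$ (which depends only on $F^{\sym}$, so the shift by $S_j$ is harmless), combined with the fact that the total area $N_j\rho_j^2\to 0$ by~\eqref{all convergences}. For the self-energy, the restriction of $\beta_j$ to $B_{2\rho_j}(x_i^j)$ is (up to translation) admissible for the cell formula $\Psi(\xi_i^j,\alpha_j,\rho_j)$, yielding the pointwise estimate $\mathcal E_{\alpha_j}^{\rm self}(\mu_j,\beta_j)\geq\sum_i\Psi(\xi_i^j,\alpha_j,\rho_j)$. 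The uniform convergence $2\alpha_j\Psi(\cdot,\alpha_j,\rho_j)\to\psi$ on compact sets established in Section~\ref{s:cell}, joint with the relaxation~\eqref{eq:varphi}, recasts $(2\alpha_j)^2\sum_i\Psi(\xi_i^j,\alpha_j,\rho_j)$ as an integral of $\varphi$ against the measure $2\alpha_j|\mu_j|$ up to a vanishing error. Standard lower semicontinuity of convex positively $1$-homogeneous functionals on measures with respect to weak-$*$ convergence then delivers $\int_\Omega\varphi(\frac{\de\mu}{\de|\mu|})\,\de|\mu|$.

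For the $\Gamma$-limsup~$(iii)$, I would proceed by density. First I would treat atomic targets $\mu=\sum_h\lambda_h\xi_h\delta_{y_h}$ with $\xi_h\in\mathbb S$ and $\beta$ smooth on a subdomain, by placing well-separated clusters of cell-formula minimisers of $\Psi(\xi_h,\alpha_j,\rho_j)$ inside a shrinking neighbourhood of each $y_h$ and gluing them to the macroscopic strain $\beta$ by a cut-off in an annular region whose scale is tuned against $\rho_j$; continuity estimates for $\mathcal I^{\alpha_j}_{\rho_j}$ control the interaction error, and the optimality in the cell formula, together with the relaxation~\eqref{eq:varphi}, yields the matching upper bound. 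A diagonal argument combined with density of such targets in $(\mathcal M(\Omega;\R^2)\cap H^{-1}(\Omega;\R^2))\times L^2(\Omega;\R^{2\times 2})$ under the constraint $\Curl\beta=\mu$ upgrades the construction to the general case. The main obstacle I anticipate is the $\Gamma$-liminf for the self-energy: extracting the relaxed density $\varphi$ from the atomic sum $\sum_i\psi(\xi_i^j)$ requires simultaneously localising the nonlocal functional on each small ball $B_{\rho_j}(x_i^j)$ and running a Reshetnyak-type argument against a measure whose mass is of order $1/\alpha_j$; this delicate interplay between localisation and nonlocality is precisely what the finite horizon $\supp Q^{1-\alpha_j}_{\rho_j}\subset B_{\rho_j}$ and the well-separateness hypothesis are designed to control.
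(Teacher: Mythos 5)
Your proposals for compactness (i) and for the $\Gamma$-liminf (ii) track the paper's argument essentially step by step: the quadratic lower bound $\Psi(\xi,\alpha,\rho)\gtrsim|\xi|^2/\alpha$ from Proposition~\ref{prop:cell-formula} together with well-separateness and $\supp Q^{1-\alpha}_\rho\subset B_\rho$ controls $2\alpha_j|\mu_j|(\Omega)$, the Korn inequality for incompatible fields (Proposition~\ref{prop:inc-Korn}) furnishes $S_j$, Lemma~\ref{lem:curl-Iadxi} and Corollary~\ref{coro:Iad-conv} identify the curl of the limit, and the liminf follows by splitting $\Omega$ into the small balls $\Omega_j=\bigcup_i B_{\rho_j}(x_{i,j})$ and their complement, using the uniform convergence $2\alpha_j\Psi(\cdot,\alpha_j,\rho_j)\to\psi$ plus Reshetnyak on $\Omega_j$ and weak $L^2$-lower semicontinuity on $\Omega\setminus\Omega_j$.

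Your $\Gamma$-limsup (iii), however, has a genuine gap. You propose first to recover \emph{atomic} limit targets $\mu=\sum_h\lambda_h\xi_h\delta_{y_h}$ by clustering cell-formula minimizers near each $y_h$, and then to conclude by density. This cannot work for two structural reasons. First, a Dirac mass is not in $H^{-1}(\Omega;\R^2)$ in dimension two: any $\beta\in L^1_{\loc}$ with $\Curl\beta=\xi\delta_{y}$ has a $1/|x-y|$-type singularity (cf.\ $\eta_\xi$), which is not in $L^2$, so the pair $(\mu,\beta)$ is never admissible for $\mathcal F$ and such $\mu$ are simply outside the domain of the limit functional. Second, atomic measures are not dense in the sense required: you would need a sequence $(\mu^n,\beta^n)$ with $\mu^n\xrightharpoonup{*}\mu$, $|\mu^n|(\Omega)\to|\mu|(\Omega)$ (for Reshetnyak continuity), $\Curl\beta^n=\mu^n$, and $\beta^n\to\beta$ in $L^2$; the last two conditions are incompatible with $\mu^n$ having atoms. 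Moreover, clustering $\sim N_j$ dislocations of the same sign into a shrinking neighbourhood of a point would incur an interaction energy that does not stay bounded after the $(2\alpha_j)^2$ rescaling. The paper instead approximates $\mu$ by \emph{piecewise constant, absolutely continuous} measures $\sum_l\xi^l\chi_{A^l}\,\de x$ and, for each piece, distributes $\sim N_j$ discrete dislocations across $A^l$ on a lattice of spacing $r_j\sim 1/\sqrt{N_j}\gg\rho_j$ via Lemma~\ref{lem:mu-appr} (GLP Lemma~14); this keeps the dislocations well separated so that the self-energies localize and the interaction energy converges to the elastic term. Replacing your "atomic targets" step with this diffuse, lattice-based construction is the missing ingredient.
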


In the subcritical regime, the self-energy is predominant as observed in Remark~\ref{rem:scaling}. Therefore, at the limit as $\alpha\to 0$ we expect that $\mu$ and $\beta$ are no longer related. Indeed, the limit functional $\mathcal F^{\rm sub}\colon\mathcal M(\Omega;\R^2)\times L^2(\Omega;\R^{2\times 2})\to [0,\infty]$ takes the form
\begin{align}\label{subcritlimit}
\mathcal F^{\rm sub}(\mu,\beta)\coloneqq
\begin{cases}
\displaystyle\frac{1}{2}\int_\Omega\C\beta(x):\beta(x)\,\de x+\int_\Omega\varphi\left(\frac{\de\mu}{\de|\mu|}\right)\de|\mu| &\text{if $\Curl\beta=0$},\\
\displaystyle\infty&\text{otherwise}.
\end{cases}
\end{align}
More precisely, we obtain the following $\Gamma$-limit result.

\begin{theorem}[Subcritical regime]\label{t:subcritical}
Let $(\alpha_j)_j\subset (0,1)$, $(\rho_j)_j\subset (0,1)$, and $(N_j)_j\subset\mathbb N$ be satisfying~\eqref{all convergences} and 
\begin{align}
N_j\alpha_j\to 0\quad\text{as $j\to\infty$}\label{eq:a-da-j-4}.
\end{align} 
\begin{itemize}
\item[$(i)$] {\bf Compactness}. Let $(\mu_j,\beta_j)_j\subset\mathcal M(\R^2;\R^2)\times L^1_{\loc}(\R^2;\R^{2\times 2})$ be such that 
\begin{equation*}
 \sup_{j\in\N}\mathcal F^{\rm sub}_{\alpha_j}(\mu_j,\beta_j)<\infty.
\end{equation*}
There exist a (not relabeled) subsequence of $(\alpha_j)_j$, a sequence of skew-symmetric matrices $(S_j)_j\subset \R^{2\times 2}_{\skw}$, and $(\mu,\beta)\in\mathcal M(\Omega;\R^2)\times L^2(\Omega;\R^{2\times 2})$ with $\Curl\beta=0$ in $\mathcal D'(\Omega;\R^2)$ such that 
\begin{align}
\frac{1}{N_j}\mu_j\xrightharpoonup{*}\mu&\quad\text{in $\mathcal M(\Omega;\R^2)$ as $j\to\infty$},\label{eq:con-sub-1}\\
\frac{\sqrt{2\alpha_j}}{\sqrt{N_j}}(\mathcal I^{\alpha_j}_{\rho_j}\beta_j-S_j)\rightharpoonup \beta&\quad\text{in $L^2(\Omega;\R^{2\times 2})$ as $j\to\infty$}.\label{eq:con-sub-2}
\end{align}

\item[$(ii)$] {\bf $\Gamma$-liminf inequality}. Let $(\mu_j,\beta_j,S_j)_j\subset\mathcal M(\R^2;\R^2)\times L^1_{\loc}(\R^2;\R^{2\times 2})\times\R^{2\times 2}_{\skw}$ and $(\mu,\beta)\in\mathcal M(\Omega;\R^2)\times L^2(\Omega;\R^{2\times 2})$ be satisfying~\eqref{eq:con-sub-1}--\eqref{eq:con-sub-2}. Then,
\begin{equation*}
\liminf_{j\to\infty}\mathcal F^{\rm sub}_{\alpha_j}(\mu_j,\beta_j)\ge \mathcal F^{\rm sub}(\mu,\beta).
\end{equation*}

\item[$(iii)$] {\bf $\Gamma$-limsup inequality}. For all $(\mu,\beta)\in\mathcal M(\Omega;\R^2)\times L^2(\Omega;\R^{2\times 2})$ with $\Curl\beta=0$ in $\mathcal D'(\Omega;\R^2)$ there is a sequence $(\mu_j,\beta_j)_j\subset\mathcal M(\R^2;\R^2)\times L^1_{\loc}(\R^2;\R^{2\times 2})$ satisfying $(\mu_j,\beta_j)\in\mathcal X_{\alpha_j}\times \mathcal A_{\alpha_j}(\mu_j)$ for all $j\in\N$,~\eqref{eq:con-sub-1}--\eqref{eq:con-sub-2} $($with $S_j=0)$, and
\begin{equation}\label{eq:gamma-limsup-sub}
\limsup_{j\to\infty}\mathcal F^{\rm sub}_{\alpha_j}(\mu_j,\beta_j)\le \mathcal F^{\rm sub}(\mu,\beta).
\end{equation}
\end{itemize}
\end{theorem}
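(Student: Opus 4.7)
The strategy parallels the critical case Theorem~\ref{t:critical}, with adjustments for the subcritical rescaling $\tfrac{2\alpha_j}{N_j}$ and the essential new ingredient $\alpha_j N_j\to 0$. The latter kills the dislocation-induced curl in the limit, so that $\beta$ is curl-free and $\mu,\beta$ decouple in $\mathcal F^{\mathrm{sub}}$.

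\emph{Compactness~(i).} From $\tfrac{2\alpha_j}{N_j}\mathcal E_{\alpha_j}(\mu_j,\beta_j)\le C$ and (C3), I first deduce the $L^2$-bound $\|(\mathcal I^{\alpha_j}_{\rho_j}\beta_j)^{\sym}\|_{L^2(\Omega)}^2\le CN_j/\alpha_j$. To lift this to the full strain, I apply a generalized Korn inequality for incompatible fields on $\Omega$: there exist skew matrices $S_j$ satisfying $\|\mathcal I^{\alpha_j}_{\rho_j}\beta_j-S_j\|_{L^2(\Omega)}^2\le C\bigl(\|(\mathcal I^{\alpha_j}_{\rho_j}\beta_j)^{\sym}\|_{L^2}^2+\|\Curl\mathcal I^{\alpha_j}_{\rho_j}\beta_j\|_{H^{-1}(\Omega)}^2\bigr)$. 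The curl equals $\mathcal I^{\alpha_j}_{\rho_j}\mu_j=\sum_i\xi_i Q^{1-\alpha_j}_{\rho_j}(\cdot-x_i)$, and its $H^{-1}$ norm is controlled by duality, exploiting symmetry of the Riesz kernel: $|\langle\mathcal I^{\alpha_j}_{\rho_j}\mu_j,\phi\rangle|=|\int\mathcal I^{\alpha_j}_{\rho_j}\phi\,d\mu_j|$, combined with Appendix estimates on $\mathcal I^{\alpha_j}_{\rho_j}\phi$ and the mass bound $|\mu_j|(\Omega)\le CN_j$; the latter follows from Proposition~\ref{prop:cell-formula} ($\Psi(\xi,\alpha_j,\rho_j)\ge c|\xi|^2/\alpha_j$) together with the fact that $\xi_i\in\mathbb S\setminus\{0\}$ lies in a discrete lattice. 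This yields the weak compactness~\eqref{eq:con-sub-1}--\eqref{eq:con-sub-2}. To check $\Curl\beta=0$, test against $\phi\in C^\infty_c(\Omega)$: $\tfrac{\sqrt{2\alpha_j}}{\sqrt{N_j}}|\langle\mathcal I^{\alpha_j}_{\rho_j}\mu_j,\phi\rangle|\le C\|\phi\|_{L^\infty}\sqrt{\alpha_j N_j}\to 0$ by~\eqref{eq:a-da-j-4}.

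\emph{$\Gamma$-liminf~(ii).} I split $\Omega=\Omega_j^{\mathrm{self}}\cup\Omega_j^{\mathrm{inter}}$ with $\Omega_j^{\mathrm{self}}\coloneqq\bigcup_i B_{\rho_j}(x_i)$. On each core $B_{\rho_j}(x_i)$, the restricted energy is bounded below by $\Psi(\xi_{i,j},\alpha_j,\rho_j)$ via~\eqref{eq:ad-psi}; summing and multiplying by $\tfrac{2\alpha_j}{N_j}$, then using the uniform convergence $2\alpha_j\Psi(\xi,\alpha_j,\rho_j)\to\psi(\xi)$ from Proposition~\ref{prop:cell-formula}, the inequality $\psi\ge\varphi$ on $\mathbb S$ from~\eqref{eq:varphi}, and Reshetnyak lower semicontinuity of $\nu\mapsto\int\varphi(d\nu/d|\nu|)\,d|\nu|$ against $\tfrac{1}{N_j}\mu_j\xrightharpoonup{*}\mu$, I recover $\int_\Omega\varphi(d\mu/d|\mu|)\,d|\mu|$. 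On $\Omega_j^{\mathrm{inter}}$, since $|\Omega_j^{\mathrm{self}}|\le CN_j\rho_j^2\to 0$ by~\eqref{all convergences}, $\chi_{\Omega_j^{\mathrm{inter}}}\to 1$ strongly in every $L^q$, so the weak $L^2$-convergence $\tfrac{\sqrt{2\alpha_j}}{\sqrt{N_j}}(\mathcal I^{\alpha_j}_{\rho_j}\beta_j-S_j)\rightharpoonup\beta$ combined with weak lower semicontinuity of convex quadratic integrals delivers $\tfrac12\int_\Omega\C\beta:\beta\,dx$.

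\emph{$\Gamma$-limsup~(iii) and main obstacle.} Since $\Curl\beta=0$ in~\eqref{subcritlimit}, dislocations and bulk strain can be designed independently. A standard density argument reduces to $\mu=\sum_k c_k\xi_k\chi_{A_k}\,dx$ with $\xi_k\in\mathbb S$, $c_k\ge 0$, $(A_k)$ a fine partition of $\Omega$ into small squares, and $\beta$ smooth. Using~\eqref{eq:varphi}, for each $k$ I choose a decomposition $\xi_k=\sum_l\lambda_{k,l}\tilde\xi_{k,l}$ with $\tilde\xi_{k,l}\in\mathbb S$ and $\sum_l\lambda_{k,l}\psi(\tilde\xi_{k,l})\le\varphi(\xi_k)+\varepsilon$. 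In each $A_k$ I place $\sim c_k\lambda_{k,l}N_j|A_k|$ well-separated dislocations of Burgers vector $\tilde\xi_{k,l}$, each surrounded by a near-minimizer of $\Psi(\tilde\xi_{k,l},\alpha_j,\rho_j)$ extended by zero outside $B_{2\rho_j}$. Outside all cores I add the curl-free bulk term $\beta_j^{\mathrm{ext}}\coloneqq\sqrt{N_j/(2\alpha_j)}\,\tilde\beta_j$ with $\tilde\beta_j$ a mollification of $\beta$. The self-contribution yields $\sum_k c_k|A_k|\varphi(\xi_k)+o(1)=\int\varphi\,d|\mu|+o(1)$, and the bulk contribution yields $\tfrac12\int\C\beta:\beta\,dx$ using $\mathcal I^{\alpha_j}_{\rho_j}\tilde\beta_j\to\beta$ in $L^2$. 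The main technical obstacle is controlling the cross terms, both between distinct dislocation cores and between the cores and the bulk strain, after applying the nonlocal $\mathcal I^{\alpha_j}_{\rho_j}$. Here the finite-horizon property is crucial: the support of each $\mathcal I^{\alpha_j}_{\rho_j}\beta_j^{\mathrm{self},i,k,l}$ stays in $B_{3\rho_j}(x_{i,k,l})$, and Cauchy--Schwarz combined with the sharp $L^2$-estimates from Section~\ref{s:cell} and $\sqrt{N_j}\rho_j\to 0$ from~\eqref{all convergences} makes all such cross contributions $o(1)$ after the prefactor $\tfrac{2\alpha_j}{N_j}$.
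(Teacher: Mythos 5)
Your compactness and $\Gamma$-liminf follow the paper's route through Remark~\ref{remark:forsub}, but with one shaky detour: you swap the $L^1$-based Korn inequality for incompatible fields (Proposition~\ref{prop:inc-Korn}, which bounds $\|\hat\beta-S\|_{L^2}$ by $\|\hat\beta^{\sym}\|_{L^2}$ plus the \emph{total variation} $|\Curl\hat\beta|(\Omega)$) for an $H^{-1}$-variant, and then try to estimate the $H^{-1}$ norm of $\mathcal I^{\alpha_j}_{\rho_j}\mu_j$ by duality. In two dimensions $H^1_0(\Omega)$ does not embed into $L^\infty$, so the pairing $\int\mathcal I^{\alpha_j}_{\rho_j}\phi\,\de\mu_j$ against $\phi\in H^1_0$ cannot be bounded simply by $\|\mathcal I^{\alpha_j}_{\rho_j}\phi\|_{L^\infty}\cdot|\mu_j|(\Omega)$, and the Appendix estimates you invoke concern $\phi\in C_c$, not $H^1_0$. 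The paper avoids this entirely: since $\Curl\mathcal I^{\alpha_j}_{\rho_j}\beta_j = \sum_i \xi_{i,j}Q^{1-\alpha_j}_{\rho_j}(\cdot-x_{i,j})$ is an $L^1$ function with $\|\Curl\mathcal I^{\alpha_j}_{\rho_j}\beta_j\|_{L^1(\Omega)} \le C\sum_i|\xi_{i,j}|\le CN_j$, one feeds the mass directly into Proposition~\ref{prop:inc-Korn} and the rescaling by $\sqrt{2\alpha_j/N_j}$ makes the curl contribution of order $\sqrt{\alpha_jN_j}\to 0$. Your derivation of $\Curl\beta=0$ from~\eqref{eq:a-da-j-4} and the liminf via the cell formula plus Reshetnyak are otherwise the paper's argument.

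The $\Gamma$-limsup has a genuine gap. You place around each dislocation point a near-minimizer of $\Psi(\tilde\xi_{k,l},\alpha_j,\rho_j)$ ``extended by zero outside $B_{2\rho_j}$.'' A near-minimizer is an $L^1$ field on $B_{2\rho_j}$ whose curl is $\tilde\xi_{k,l}\delta_0$ \emph{inside} $B_{2\rho_j}$; its zero extension to $\R^2$ acquires additional distributional curl concentrated on the circle $\partial B_{2\rho_j}(x_{i,k,l})$, a nontrivial line measure of total mass $-\tilde\xi_{k,l}$. Adding a curl-free bulk term $\beta_j^{\rm ext}$ does not cancel this, so the resulting $\beta_j$ does not satisfy $\Curl\beta_j=\mu_j$ in $\Omega_{\rho_j}$ and is not admissible in $\mathcal A_{\alpha_j}(\mu_j)$. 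The ``cross-term'' obstacle you flag is real but secondary; the missing step is the explicit curl correction. The paper circumvents the hard cutoff altogether: it uses $\eta_{\xi_{i,j}}$ multiplied by a smooth bump at the much larger lattice scale $r_j\sim N_j^{-1/2}$ (fields $\zeta_{i,j}$ in~\eqref{eq:zetaij-cr}), so the excess curl $\nu_j$ in~\eqref{eq:nuj-cr} is an $L^\infty$ function rather than a singular boundary measure, and then adds a compensating gradient $\beta_j^{\rm rem}=\nabla w^{\rm rem}_jJ^T$ solving $\Delta w^{\rm rem}_j=-\nu_j$ (simpler here than in the critical case precisely because $\Curl\beta=0$). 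Showing $\frac{\sqrt{2\alpha_j}}{\sqrt{N_j}}\mathcal I^{\alpha_j}_{\rho_j}\beta^{\rm rem}_j\to 0$ in $L^2$ then rests on the estimate $\frac{\sqrt{2\alpha_j}}{\sqrt{N_j}}\|\nu_j\|_{L^\infty}\le C\frac{\sqrt{2\alpha_jN_j}}{N_j(r_j-\rho_j)^2}\to 0$, which is where~\eqref{eq:a-da-j-4} enters; your sketch never touches this. Without the remainder construction the recovery sequence is not well-defined, so~\eqref{eq:gamma-limsup-sub} is not established.
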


In the supercritical regime, the interaction energy is dominant as observed in Remark~\ref{rem:scaling}. Hence, in the limit $\alpha\to 0$, the self-energy term disappears. Therefore, as $\alpha\to 0$, $\mathcal F_\alpha$ $\Gamma$-converges to the functional $\mathcal F^{\rm super}\colon L^2(\Omega;\R^{2\times 2}_{\sym})\to [0,\infty]$ defined as
\begin{align}\label{supercritlimit}
\mathcal F^{\rm super}(\beta)=
\frac{1}{2}\int_\Omega\C\beta(x):\beta(x)\,\de x\quad\text{for all $\beta\in L^2(\Omega;\R^{2\times 2}_{\sym})$.}
\end{align}
More precisely, we have the following result. 

\begin{theorem}[Supercritical regime]\label{t:supercritical}
Let $(\alpha_j)_j\subset (0,1)$, $(\rho_j)_j\subset (0,1)$, and $(N_j)_j\subset\mathbb N$ be satisfying~\eqref{all convergences} and 
\begin{align}
&N_j\alpha_j\to \infty \quad\text{as $j\to\infty$}\label{eq:a-da-j-6}.
\end{align} 
\begin{itemize}
\item[$(i)$] {\bf Compactness}. Let $(\mu_j,\beta_j)_j\subset\mathcal M(\R^2;\R^2)\times L^1_{\loc}(\R^2;\R^{2\times 2})$ be such that 
\begin{equation*}
 \sup_{j\in\N}\mathcal F^{\rm super}_{\alpha_j}(\mu_j,\beta_j)<\infty.
\end{equation*}
There exist a (not relabeled) subsequence of $(\alpha_j)_j$ and $\beta\in L^2(\Omega;\R^{2\times 2}_{\sym})$ such that 
\begin{align}
\frac{1}{N_j}\mathcal I^{\alpha_j}_{\rho_j}\beta_j^{\sym}\rightharpoonup \beta\quad\text{in $L^2(\Omega;\R^{2\times 2}_{\sym})$ as $j\to\infty$}.\label{eq:con-sup}
\end{align}

\item[$(ii)$] {\bf $\Gamma$-liminf inequality}. Let $(\mu_j,\beta_j)_j\subset\mathcal M(\R^2;\R^2)\times L^1_{\loc}(\R^2;\R^{2\times 2})$ and $\beta\in L^2(\Omega;\R^{2\times 2}_{\sym})$ be satisfying~\eqref{eq:con-sup}. Then
\begin{equation*}
\liminf_{j\to\infty}\mathcal F^{\rm super}_{\alpha_j}(\mu_j,\beta_j)\ge \mathcal F^{\rm super}(\beta).
\end{equation*}

\item[$(iii)$] {\bf $\Gamma$-limsup inequality}. For all $\beta\in L^2(\Omega;\R^{2\times 2}_{\sym})$ there is a sequence $(\mu_j,\beta_j)_j\subset\mathcal M(\R^2;\R^2)\times L^1_{\loc}(\R^2;\R^{2\times 2})$ satisfying $(\mu_j,\beta_j)\in\mathcal X_{\alpha_j}\times \mathcal A_{\alpha_j}(\mu_j)$ for all $j\in\N$,~\eqref{eq:con-sup}, and
\begin{equation}\label{eq:gamma-limsup-sup}
\limsup_{j\to\infty}\mathcal F^{\rm super}_{\alpha_j}(\mu_j,\beta_j)\le \mathcal F^{\rm super}(\beta).
\end{equation}
\end{itemize}
\end{theorem}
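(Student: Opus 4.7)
I would start by observing that $\mathcal I^{\alpha_j}_{\rho_j}$ is a componentwise convolution with $Q^{1-\alpha_j}_{\rho_j}$, hence it commutes with matrix symmetrization: $(\mathcal I^{\alpha_j}_{\rho_j}\beta_j)^{\sym}=\mathcal I^{\alpha_j}_{\rho_j}\beta_j^{\sym}$. Combining~(C3) with the assumed energy bound yields
\[
\frac{\nu_1}{2}\Bigl\|\frac{1}{N_j}\mathcal I^{\alpha_j}_{\rho_j}\beta_j^{\sym}\Bigr\|^2_{L^2(\Omega)}\le \mathcal F^{\rm super}_{\alpha_j}(\mu_j,\beta_j)\le C,
\]
and weak $L^2$-compactness will provide the subsequence claimed in~\eqref{eq:con-sup}. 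The liminf inequality will then follow at once from weak $L^2$-lower semicontinuity of the convex continuous functional $G\mapsto \tfrac12\int_\Omega \C G:G\,\de x$, applied to the sequence $\tfrac{1}{N_j}\mathcal I^{\alpha_j}_{\rho_j}\beta_j^{\sym}$ appearing in the rewriting $\mathcal F^{\rm super}_{\alpha_j}(\mu_j,\beta_j)=\tfrac12\int_\Omega \C(\tfrac{1}{N_j}\mathcal I^{\alpha_j}_{\rho_j}\beta_j^{\sym}):(\tfrac{1}{N_j}\mathcal I^{\alpha_j}_{\rho_j}\beta_j^{\sym})\,\de x$.

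\textbf{$\Gamma$-limsup~(iii): construction.} Since $\mathcal F^{\rm super}$ is continuous on $L^2(\Omega;\R^{2\times 2}_{\sym})$, a diagonal argument will reduce the proof to $\beta\in C^\infty_c(\Omega;\R^{2\times 2}_{\sym})$. For such $\beta$, I would choose $h_j\coloneqq N_j^{-1/2}$, which satisfies $h_j\ge 2\rho_j$ for $j$ large by~\eqref{all convergences}, and place dislocations on a square grid $\{x_i^{(j)}\}\subset\Omega$ of spacing $h_j$ with $\dist(x_i^{(j)},\partial\Omega)\ge 2\rho_j$ and cells $Q_i^{(j)}$ of side $h_j$. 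Setting
\[
\xi_i^{(j)}\coloneqq\Pi_{\mathbb S}\Bigl(N_j\int_{Q_i^{(j)}}\Curl\beta\,\de y\Bigr),\qquad \mu_j\coloneqq\sum_{\xi_i^{(j)}\neq 0}\xi_i^{(j)}\delta_{x_i^{(j)}}\in\mathcal X_{\alpha_j},
\]
where $\Pi_{\mathbb S}$ denotes the closest-point projection onto $\mathbb S$, I would define $\beta_j\coloneqq N_j\beta+\eta_j\in L^1(\Omega_{\rho_j};\R^{2\times 2})$, with $\eta_j$ solving $\Curl\eta_j=\mu_j-N_j\Curl\beta$ cell by cell: on each $Q_i^{(j)}$ the atomic contribution is captured by a localized Volterra-type singular field around $x_i^{(j)}$ (cf.\ Remark~\ref{rem:zeta}), while the remaining smooth part has almost vanishing integrated flux thanks to the rounding step and is absorbed by an $L^2$-bounded correction supported in $Q_i^{(j)}$.

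\textbf{Main obstacle.} The crux will be to show that, after division by $N_j^2$, the expansion
\[
\mathcal I^{\alpha_j}_{\rho_j}\beta_j:\mathcal I^{\alpha_j}_{\rho_j}\beta_j=N_j^2\,\mathcal I^{\alpha_j}_{\rho_j}\beta:\mathcal I^{\alpha_j}_{\rho_j}\beta+2N_j\,\mathcal I^{\alpha_j}_{\rho_j}\beta:\mathcal I^{\alpha_j}_{\rho_j}\eta_j+\mathcal I^{\alpha_j}_{\rho_j}\eta_j:\mathcal I^{\alpha_j}_{\rho_j}\eta_j
\]
asymptotes to $\C\beta:\beta$. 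The leading term will converge to $\C\beta:\beta$ using $\mathcal I^{\alpha_j}_{\rho_j}\beta\to\beta$ in $L^2(\Omega)$ for smooth $\beta$ (Corollary~\ref{coro:Iad-conv}). By Proposition~\ref{prop:cell-formula}, each dislocation contributes $O(1/\alpha_j)$ to the self-energy; summing over the $O(N_j)$ atoms and using that the smooth corrections are $L^2$-bounded yields $\|\mathcal I^{\alpha_j}_{\rho_j}\eta_j\|_{L^2(\Omega)}^2\lesssim N_j/\alpha_j$. Combined with the supercritical scaling $N_j\alpha_j\to\infty$ from~\eqref{eq:a-da-j-6}, this will give $\tfrac{1}{N_j^2}\|\mathcal I^{\alpha_j}_{\rho_j}\eta_j\|_{L^2}^2\to 0$, a vanishing cross term by Cauchy--Schwarz, and the strong $L^2$-convergence $\tfrac{1}{N_j}\mathcal I^{\alpha_j}_{\rho_j}\beta_j^{\sym}\to\beta$ that secures both~\eqref{eq:con-sup} and~\eqref{eq:gamma-limsup-sup}. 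The delicate technical point lies in the cell-by-cell construction of $\eta_j$ meeting the precise curl constraint, together with the quantitative control of the smooth correction that is needed to keep its $L^2$-norm of lower order than $N_j/\sqrt{\alpha_j}$ under the scaling~\eqref{all convergences}.
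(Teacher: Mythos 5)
Your proposal is correct and follows the same overall strategy as the paper. Compactness and the liminf inequality are handled identically: $(\mathcal I^{\alpha_j}_{\rho_j}\beta_j)^{\sym}=\mathcal I^{\alpha_j}_{\rho_j}\beta_j^{\sym}$, then (C3), weak $L^2$-compactness, and weak lower semicontinuity of the quadratic form.

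For the limsup, the paper also reduces to $\beta\in C_c^\infty(\Omega;\R^{2\times 2}_{\sym})$ and then writes $\beta_j=N_j\beta+\zeta_j+\beta^{\rm rem}_j$, where $\zeta_j$ is the cut-off singular field built from $\eta_\xi$ (your ``localized Volterra-type fields'') and $\beta^{\rm rem}_j=\nabla w^{\rm rem}_jJ^T$ solves a global Poisson problem $\Delta w^{\rm rem}_j=-N_j\mu-\nu_j$ on $B_R\supset\supset\Omega$. The two places where you diverge are cosmetic: (a) you construct the approximating measures $\mu_j$ explicitly via a square grid of spacing $N_j^{-1/2}$ with projection to $\mathbb{S}$, whereas the paper simply invokes~\cite[Theorem~18]{GLP} to produce $\mu_j$ with $|\xi_{i,j}|\le C$, $M_j/N_j\le C$ and $r_j= C/\sqrt{N_j}$; (b) you propose a cell-by-cell curl correction rather than one global Poisson solve, but both give the same conclusion. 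The key scaling estimate you identify, namely $\|\mathcal I^{\alpha_j}_{\rho_j}\zeta_j\|^2_{L^2(\Omega)}\lesssim N_j/\alpha_j$ (so that dividing by $N_j^2$ gives $1/(\alpha_jN_j)\to 0$ under~\eqref{eq:a-da-j-6}), is precisely the paper's~\eqref{eq:ls-con-sup-1} (whose displayed exponent $(\alpha_jN_j)^{2}$ appears to be a harmless typo for $\alpha_jN_j$). One small imprecision: your smooth correction is not uniformly $L^2$-bounded but only $o(N_j)$ in $L^2$ (it comes from the $H^{-1}$-smallness of $\mu+\tfrac{1}{N_j}\nu_j$); nevertheless $o(N_j)^2/N_j^2\to 0$, so your final conclusion is unaffected.
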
 

\begin{remark}\label{rmk:reformulation}
By Lemma~\ref{lem:curl-Iadxi}, we deduce that for all $\mu=\sum_{i=1}^M\xi_i\delta_{x_i}\in\mathcal X_\alpha$ and $\beta\in\mathcal A_\alpha(\mu)$ we have
$$\Curl\mathcal I^\alpha_{\rho_\alpha}\beta=\sum_{i=1}^M\xi_iQ^{1-\alpha}_{\rho_\alpha}(\,\cdot\,-x_i)\quad\text{in $\mathcal D'(\Omega;\R^2)$}.$$ 
Therefore, we could study, in a completely equivalent way, the $\Gamma$-limit as $\alpha\to 0$ of the functionals $\hat{\mathcal E}_\alpha \colon L^1(\Omega;\R^2)\times L^2(\Omega;\R^{2\times 2})\to [0,\infty]$, defined through the energy
\begin{equation*}
\hat{\mathcal E}_\alpha(\hat\mu,\hat\beta)\coloneqq\frac{1}{2}\int_\Omega \C\hat\beta(x):\hat\beta(x)\,\de x\quad\text{for $\hat\mu\in\hat{\mathcal X}_\alpha$ and $\hat\beta\in \hat{\mathcal A}(\hat\mu)$},
\end{equation*}
where
\begin{align*}
\hat{\mathcal X}_\alpha\coloneqq\left\{\sum_{i=1}^M\xi_iQ^{1-\alpha}_{\rho_\alpha}(\,\cdot\,-x_i):M\in\mathbb N,\,\xi_i\in \mathbb S\setminus\{0\},\,B_{\rho_\alpha}(x_i)\subset\Omega,\,|x_i-x_k|\ge 2\rho_\alpha\text{ for $i\neq k$}\right\},
\end{align*}
and
$$\hat{\mathcal A}(\hat\mu)\coloneqq\left\{\hat\beta\in L^2(\Omega;\R^{2\times 2}): \Curl\hat\beta=\hat\mu\text{ in $\mathcal D'(\Omega;\R^2)$}\right\}\quad\text{for $\hat \mu\in \hat{\mathcal X}_\alpha$}.$$
As explained in the introduction, this is in analogy with~\cite{CoGaOr15}, where the authors considered a model of discrete dislocations in which the curl constraint is regularized via mollifiers. We point out that, compared to~\cite{CoGaOr15}, we consider singular convolution operators. 
\end{remark}


\section{Cell formula for the self-energy}\label{s:cell}

In this section, we compute the asymptotic self-energy stored in a neighborhood of a dislocation. As in~\cite{GLP}, this is done by defining a self-energy through a cell problem. Recall the definitions of the cell formulas $\Psi$ and $\psi$ in~\eqref{eq:ad-psi} and~\eqref{eq:psi}, respectively. Our main goal is to show that $\psi$ is well-defined and, as a byproduct, that it coincides with the cell formula for the core-radius model of~\cite{GLP}. The main statement of this section and its proof are contained in Section~\ref{ascel}. In Sections~\ref{auxi-proof}--\ref{auxi-proof3} we then give the proof of some auxiliary statements.

Before we start, we construct elements $\mathcal A_\alpha(\mu)$ for $\mu\in \mathcal X_\alpha$, which in particular provides useful competitors for the minimization problem~\eqref{eq:ad-psi}. 

\begin{remark}[Competitors]\label{rem:zeta}
(i) Let us consider $\mu = \xi\delta_{0} $. We introduce the function 
\begin{equation*}
\zeta(x)\coloneqq \frac{\xi}{2\pi}\otimes \frac{Jx}{|x|^2}\quad \text{for $x\in\R^2\setminus\{0\}$}.
\end{equation*}
Then, $\zeta\in C^\infty(\R^2\setminus\{0\};\R^{2\times 2})\cap L^p_{\loc}(\R^2;\R^{2\times 2})$ for every $p\in [1,2)$ and $\Curl\zeta=\xi\delta_{0}$ in $\mathcal D'(\R^2;\R^2)$. In fact, $\Curl\zeta=0$ in $\R^2\setminus\{0\}$, and for all $\Phi\in C_c^\infty(\R^2;\R^2)$ we have by~\eqref{curlicurl} and the divergence formula
\begin{align*}
\langle\Curl \zeta,\Phi\rangle_{\mathcal D'(\R^2)}&=-\int_{\R^2}\zeta(x)J:\nabla\Phi(x)\,\de x =-\lim_{\varepsilon\to 0}\int_{\R^2\setminus B_\varepsilon}\zeta(x)J:\nabla\Phi(x)\,\de x\\
&=-\lim_{\varepsilon\to 0}\int_{\R^2\setminus B_\varepsilon}\Div ((\zeta(x)J)^T\Phi(x))\,\de x \\&=\lim_{\varepsilon\to 0}\frac{1}{2\pi\varepsilon}\int_{\partial B_\varepsilon}\xi\cdot\Phi(x)\,\de x=\xi\cdot\Phi(0).
\end{align*}
This shows that $\zeta\in \mathcal A(\xi,\rho)$ defined in~\eqref{Axirho} for all $\rho>0$, and thus $\mathcal A(\xi,\rho) \neq \emptyset$. Moreover, Lemma~\ref{lem:Iad-int} implies $\mathcal I^\alpha_{\rho}\zeta\in L^2_{\rm loc}(\R^2;\R^{2\times 2})$ for each $\alpha \in (0,1)$. 
 
(ii) For more general measures $\overline{\mu} = \sum_{i=1}^M\xi_i\delta_{x_i} \in \mathcal X_\alpha$, we define $\overline{\zeta} = \sum_{i=1}^M \zeta_i(\cdot - x_i)$, where $\zeta_i$ is as in (i) with $\xi_i$ in place of $\xi$. Then, $\overline{\zeta} \in \mathcal{A}_\alpha(\overline{\mu}) \neq \emptyset$ and we obtain $\mathcal I^\alpha_{ \rho}\overline{\zeta}\in L^2(\Omega;\R^{2\times 2})$. In particular, we have $\mathcal E_\alpha(\overline{\mu},\overline{\zeta})<\infty$. 
\end{remark}

\subsection{Asymptotic cell formula}\label{ascel}

In this subsection we show that formula~\eqref{eq:psi} is well-defined and we compute it, similarly to~\cite{GLP}. To this aim, given $\xi\in\R^2$, we consider a distributional solution $\eta_\xi\colon\R^2\to \R^{2\times 2}$ to
\begin{equation}\label{eq:eta-sys}
\begin{cases}
\Curl \eta_\xi=\xi\delta_0&\text{in $\R^2$},\\
\Div\C \eta_\xi=0&\text{in $\R^2$}.
\end{cases}
\end{equation}
The function $\eta_\xi$ is smooth in $\R^2\setminus\{0\}$ and in polar coordinates takes the form
\begin{equation}\label{eq:eta-decay}
\eta_\xi(r\cos\theta,r\sin\theta)=\frac{1}{r}\Gamma_\xi(\theta)\quad\text{for all $r\in(0,\infty)$ and $\theta\in[0,2\pi)$},
\end{equation}
where the function $\Gamma_\xi$ depends on the elasticity tensor $\C$, is linear in $\xi$, and satisfies the bound
\begin{equation}\label{eq:eta-est}
|\Gamma_{\xi}(\theta)|\le K|\xi|\quad\text{for all $\theta\in[0,2\pi)$}
\end{equation}
for a constant $K=K(\nu_1,\nu_2)>0$. We refer to~\cite{BBS} and~\cite{GLP} for an exhaustive treatment of the function $\eta_\xi$. We have the following result. 

\begin{proposition}\label{prop:cell-formula}
For all $\xi\in\R^2$ and $\rho\in (0,1)$ we have
\begin{equation}\label{eq:point-conv}
\psi(\xi)\coloneqq \lim_{\alpha\to 0}2\alpha\Psi(\xi,\alpha,\rho)=\lim_{\alpha\to 0}\alpha\int_{B_\rho}\C\mathcal I^\alpha_\rho\eta_\xi(x):\mathcal I^\alpha_\rho\eta_\xi(x)\,\de x= \frac{1}{2}\int_0^{2\pi}\C \Gamma_\xi(\theta):\Gamma_\xi(\theta)\,\de \theta.
\end{equation}
In particular, $\psi(\xi) \ge c |\xi|^2$, see~\eqref{eq:psi-prop} below. Moreover, if $(\rho_\alpha)_{\alpha\in (0,1)}\subset (0,1)$ is such that 
$$\alpha\log\rho_\alpha\to 0\quad\text{as $\alpha\to 0$},$$
then
\begin{equation}\label{eq:unif-conv}
\lim_{\alpha\to 0}\sup_{\xi\in\mathbb S^1}\left|2\alpha\Psi(\xi,\alpha,\rho_\alpha)-\psi(\xi)\right|=0.
\end{equation}
\end{proposition}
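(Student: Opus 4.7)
The plan is to establish matching upper and lower bounds for $2\alpha\Psi(\xi,\alpha,\rho)$ that converge to the same explicit quantity. For the upper bound I would use $\eta_\xi$ itself as a competitor: by~\eqref{eq:eta-sys} it belongs to $\mathcal{A}(\xi,\rho)$ (cf.~the argument in Remark~\ref{rem:zeta}(i) adapted from $\zeta$ to $\eta_\xi$), so~\eqref{eq:ad-psi} immediately gives
$$2\alpha\Psi(\xi,\alpha,\rho) \le \alpha\int_{B_\rho}\C\mathcal{I}^\alpha_\rho\eta_\xi:\mathcal{I}^\alpha_\rho\eta_\xi\,\de x,$$
which explains the second identity in~\eqref{eq:point-conv} once the corresponding lower bound is proved.

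For the matching lower bound (the content of Lemma~\ref{lem:lower-est}) the idea is that $\eta_\xi$ is, in a nonlocal sense, the energy minimizer. Concretely, for every $\beta\in\mathcal{A}(\xi,\rho)$ the curl formula for Riesz potentials with finite horizon yields $\Curl\mathcal{I}^\alpha_\rho\beta = \xi Q^{1-\alpha}_\rho = \Curl\mathcal{I}^\alpha_\rho\eta_\xi$, so $\mathcal{I}^\alpha_\rho\beta-\mathcal{I}^\alpha_\rho\eta_\xi$ is curl-free and hence a gradient on the simply connected set $B_\rho$. On the other hand, from $\Div\C\eta_\xi=0$ in~\eqref{eq:eta-sys} and Proposition~\ref{prop:Psd} one inherits $\Div\C\mathcal{I}^\alpha_\rho\eta_\xi=0$. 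Testing the quadratic form against $\mathcal{I}^\alpha_\rho\beta-\mathcal{I}^\alpha_\rho\eta_\xi$ multiplied by a cutoff $\phi\in C_c^\infty(B_\rho)$, expanding the square and integrating by parts, one expects to obtain
$$\tfrac{1}{2}\int_{B_\rho}\C\mathcal{I}^\alpha_\rho\beta:\mathcal{I}^\alpha_\rho\beta\,\de x \ge \tfrac{1}{2}\int_{B_{\rho'}}\C\mathcal{I}^\alpha_\rho\eta_\xi:\mathcal{I}^\alpha_\rho\eta_\xi\,\de x - R(\alpha,\rho',\rho)$$
for every $\rho'<\rho$, with a remainder $R$ arising from the cutoff. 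The goal is then to show that $2\alpha R(\alpha,\rho',\rho)\to 0$ as $\alpha\to 0$ and $\rho'\to\rho$. This is the most delicate point because $\eta_\xi$ carries a $1/|x|$ singularity at the origin and the nonlocal operator $\mathcal{I}^\alpha_\rho$ sees data in $B_{2\rho}$, so $\beta$ cannot be freely modified. I expect this boundary-layer control to be the main obstacle of the proof, resolved by exploiting the smoothness of $Q^{1-\alpha}_\rho$ away from $0$ recorded in~\eqref{eq:Qsd-p} together with the scaling $\alpha\gamma_\alpha\to 2\pi$ of Remark~\ref{rem:gamma-a}.

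It then remains to compute the asymptotic value of $\alpha\int_{B_\rho}\C\mathcal{I}^\alpha_\rho\eta_\xi:\mathcal{I}^\alpha_\rho\eta_\xi\,\de x$ explicitly, as in Lemmas~\ref{prop:Ia-eta-con}--\ref{lemma 4.7}. Using the $\rho$-rescaling $Q^{1-\alpha}_\rho(x)=\rho^{\alpha-2}Q^{1-\alpha}_1(x/\rho)$, which follows from the definition~\eqref{eq:Qsd} by a change of variable, together with the $(-1)$-homogeneity $\eta_\xi(\rho x)=\rho^{-1}\eta_\xi(x)$ from~\eqref{eq:eta-decay}, one checks $\mathcal{I}^\alpha_\rho\eta_\xi(\rho x)=\rho^{\alpha-1}\mathcal{I}^\alpha_1\eta_\xi(x)$, so that
$$\alpha\int_{B_\rho}\C\mathcal{I}^\alpha_\rho\eta_\xi:\mathcal{I}^\alpha_\rho\eta_\xi\,\de x = \alpha\rho^{2\alpha}\int_{B_1}\C\mathcal{I}^\alpha_1\eta_\xi:\mathcal{I}^\alpha_1\eta_\xi\,\de x,$$
and the hypothesis $\alpha\log\rho_\alpha\to 0$ gives $\rho^{2\alpha}\to 1$ uniformly. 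Passing to polar coordinates $x=r\omega$, using that $\mathcal{I}^\alpha_1\eta_\xi(r\omega)$ concentrates, for small $\alpha$, on the angular profile $\Gamma_\xi(\theta_\omega)$ away from the origin, while the logarithmic divergence $\int_0^1 r^{2\alpha-1}\,\de r = 1/(2\alpha)$ near the origin produces precisely the cancellation needed against the $\alpha$ prefactor, yields the boundary integral $\tfrac{1}{2}\int_0^{2\pi}\C\Gamma_\xi:\Gamma_\xi\,\de\theta$ claimed in~\eqref{eq:point-conv}. Since every estimate depends on $\xi$ only through the uniform bound $|\Gamma_\xi|\le K|\xi|$ of~\eqref{eq:eta-est}, all convergences are uniform in $\xi\in\mathbb{S}^1$, giving~\eqref{eq:unif-conv}.
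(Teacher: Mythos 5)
Your outline captures the main ingredients correctly: the upper bound via the competitor $\eta_\xi\in\mathcal A(\xi,\rho)$, the orthogonality coming from $\Div\C\mathcal I^\alpha_\rho\eta_\xi=0$ tested against gradients of functions in $H^1_0(B_\rho;\R^2)$, and the radial rescaling that reduces the explicit computation of the limit to the angular integral $\tfrac{1}{2}\int_0^{2\pi}\C\Gamma_\xi:\Gamma_\xi\,\de\theta$. Your rescaling to $B_1$ via $Q^{1-\alpha}_\rho(x)=\rho^{\alpha-2}Q^{1-\alpha}_1(x/\rho)$ and $\mathcal I^\alpha_\rho\eta_\xi(\rho x)=\rho^{\alpha-1}\mathcal I^\alpha_1\eta_\xi(x)$ is correct and is an equivalent reformulation of the scaling identity~\eqref{eq:Ia-eta-scal2} used in the paper; the polar-coordinate computation and the role of Lemmas~\ref{prop:Ia-eta-con}--\ref{lemma 4.7} then proceed as you describe, and the uniformity in $\xi\in\mathbb S^1$ is handled exactly as you indicate (linearity of $\xi\mapsto\eta_\xi$ plus the bound~\eqref{eq:eta-est}).

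The genuine gap is in your route to the matching lower bound, i.e.\ the content of Lemma~\ref{lem:lower-est}. You propose to cut off the curl-free difference $\nabla v_1=\hat\beta-\mathcal I^\alpha_\rho\eta_\xi$ in a boundary layer $B_\rho\setminus B_{\rho'}$ and send $\rho'\to\rho$ so that ``$2\alpha R(\alpha,\rho',\rho)\to 0$.'' This cannot work as stated: there is no a priori control on the energy of the minimizer $\hat\beta$ in a fixed outer annulus, since the total energy is of order $1/\alpha$, so even after multiplication by $2\alpha$ the cutoff error need not vanish; nor does it improve by sending $\rho'\to\rho$. The paper's resolution is a different and essential idea: decompose $B_\rho\setminus\{0\}$ into dyadic annuli $C_k=B_{\rho 2^{-k+1}}\setminus B_{\rho 2^{-k}}$, select the first annulus $C_{\overline k}$ on which the $\hat\beta$-energy is dominated by the $\mathcal I^\alpha_\rho\eta_\xi$-energy (see~\eqref{eq:over-k}), and cut off \emph{there}. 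The argument closes because the $\mathcal I^\alpha_\rho\eta_\xi$-energy on any dyadic annulus is bounded by $C|\xi|^2$ \emph{uniformly in $\alpha$, $\rho$, and $\overline k$} (a consequence of the $|x|^{-(1-\alpha)}$ decay~\eqref{eq:Ia-eta-est2}, cf.~\eqref{eq:Ck-est}); by the choice of $\overline k$ this gives the same bound for $\hat\beta$ on $C_{\overline k}$, and then Korn's and Poincaré's inequalities on $C_{\overline k}$ control the cutoff error by an $\alpha$-independent $C|\xi|^2$. This additive, $\alpha$-independent error is exactly what survives multiplication by $2\alpha$ and yields~\eqref{eq:lower-est}. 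The smoothness of $Q^{1-\alpha}_\rho$ away from the origin and the normalization $\alpha\gamma_\alpha\to 2\pi$, which you invoke, do not by themselves supply this control; the annulus-selection trick is the missing step.
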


\begin{remark}\label{rem:Ia-scaling} 
Thanks to Proposition~\ref{prop:cell-formula}, we derive that our limit energy $\psi$ is independent of $\rho\in (0,1)$. Moreover, in view of~\eqref{eq:point-conv}, $\psi$ coincides with the limit energy defined for the core-radius model in~\cite[Equation~(36)]{GLP}. This identity is the reason why in Section~\ref{s:main} we defined the energies $\mathcal F_\alpha$ and $\psi$ by using $2\alpha$ instead of $\alpha$. 
\end{remark}

By~\eqref{eq:point-conv}, it follows that 
\begin{equation}\label{eq:psi-prop}
\psi(\lambda\xi)=\lambda^2\psi(\xi)\quad\text{for all $\xi\in\R^2$ and $\lambda \in \R$},\qquad \inf_{\xi\in\mathbb S^1}\psi(\xi)=\min_{\xi\in\mathbb S^1}\psi(\xi)=c>0.
\end{equation}
Indeed, if $\psi(\xi)=0$ for some $\xi\in\mathbb S^1$, then, in view of~\eqref{eq:point-conv} and (C3), we find $\Gamma_\xi^{\sym}=0$ on $[0,2\pi)$. Hence, $\eta_\xi^{\sym}=0$ in $\R^2\setminus\{0\}$, thanks to~\eqref{eq:eta-decay}. Since $\Curl\eta_\xi=0$ in $\R^2\setminus\{0\}$, by applying Korn's inequality locally in $\R^2\setminus\{0\}$, we derive that $\eta_\xi$ is constant, which contradicts $\Curl\eta_\xi=\xi\delta_0$ in $\R^2$. 
This implies~\eqref{eq:psi-prop}, and thus~\eqref{eq:psi-prop-neu} holds. If $(\rho_\alpha)_{\alpha\in (0,1)}\subset (0,1)$ is such that 
$$\alpha\log\rho_\alpha\to 0\quad\text{as $\alpha\to 0$},$$
then by~\eqref{eq:unif-conv} and~\eqref{eq:psi-prop} we derive
\begin{equation}\label{eq:unif-conv2}
\lim_{\alpha\to 0}2\alpha\Psi(\xi,\alpha,\rho_\alpha)=\psi(\xi)\quad\text{for all $\xi\in\R^2$}.
\end{equation}
The proof of Proposition~\ref{prop:cell-formula} requires several intermediate results. We state the relevant estimates and show how they imply Proposition~\ref{prop:cell-formula}. Their proofs are deferred to Sections~\ref{auxi-proof}--\ref{auxi-proof3}. First of all, along the lines of~\cite[Lemma~5]{GLP}, we derive the following estimates for the cell formula. 

\begin{lemma}\label{lem:lower-est} 
Let $\xi\in\R^2$, $\alpha\in \left(0,\frac{1}{2}\right)$, and $\rho\in (0,1)$ be fixed. There exists a constant $C>0$, independent of $\xi$, $\alpha$, and $\rho$, such that 
\begin{align}\label{eq:lower-est}
\Psi(\xi,\alpha,\rho)\le \frac{1}{2}\int_{B_\rho}\C \mathcal I^\alpha_\rho\eta_\xi(x):\mathcal I^\alpha_\rho\eta_\xi(x)\,\de x\le\Psi (\xi,\alpha,\rho)+C|\xi|^2.
\end{align}
\end{lemma}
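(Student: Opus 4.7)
The first inequality is immediate: from~\eqref{eq:eta-decay}--\eqref{eq:eta-est} we have $\eta_\xi\in L^p_{\loc}(\R^2;\R^{2\times 2})$ for every $p<2$, and $\Curl\eta_\xi=\xi\delta_0$ in $\mathcal D'(\R^2;\R^2)$ by~\eqref{eq:eta-sys}, so the restriction of $\eta_\xi$ to $B_{2\rho}$ is an admissible competitor in~\eqref{eq:ad-psi}. For the second inequality, fix any $\beta\in\mathcal A(\xi,\rho)$ with finite cell energy (otherwise the bound is trivial), set $K:=Q^{1-\alpha}_\rho$, and decompose $\beta=\eta_\xi+\gamma$ on $B_{2\rho}$. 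Since $\Curl\gamma=0$ in $\mathcal D'(B_{2\rho};\R^2)$ and $B_{2\rho}$ is simply connected, $\gamma=\nabla u$ for some $u\in W^{1,1}(B_{2\rho};\R^2)$; after a routine approximation we may assume $u$ is smooth. Expanding the square,
\begin{equation*}
\tfrac12\int_{B_\rho}\C(K*\beta):(K*\beta)\de x=\tfrac12\int_{B_\rho}\C(K*\eta_\xi):(K*\eta_\xi)\de x+\mathcal R+\mathcal E_\gamma,
\end{equation*}
where $\mathcal R:=\int_{B_\rho}\C(K*\eta_\xi):(K*\gamma)\de x$ and $\mathcal E_\gamma:=\tfrac12\int_{B_\rho}\C(K*\gamma):(K*\gamma)\de x\ge 0$. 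Since $\mathcal E_\gamma\ge 0$, it suffices to prove $|\mathcal R|\le C|\xi|^2+\mathcal E_\gamma$.

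The key is an integration by parts in $\mathcal R$ exploiting $\Div\C\eta_\xi=0$. Because $K$ is supported in $B_\rho$, for $x\in B_\rho$ the convolution $(K*\gamma)(x)$ only samples $\gamma$ in $B_{2\rho}$, and~\eqref{eq:nablasd-Isd} (extended to Sobolev maps) gives $K*\gamma=\nabla v$ on $B_\rho$ with $v:=K*u$ (after extending $u$ by zero outside $B_{2\rho}$; the would-be jump on $\partial B_{2\rho}$ does not enter the values of $v$ on $B_\rho$, since $K(x-y)=0$ for $x\in B_\rho$ and $|y|=2\rho$). Convolving $\Div\C\eta_\xi=0$ with $K$ yields $\Div\sigma=0$ in $\R^2$ for $\sigma:=\C(K*\eta_\xi)$, and $\sigma$ is smooth near $\partial B_\rho$ because $\eta_\xi$ and $K$ are smooth away from $0$. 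Integration by parts on $B_\rho$ then gives
\begin{equation*}
\mathcal R=\int_{\partial B_\rho}(\sigma\nu)\cdot v\de\mathcal H^1.
\end{equation*}
From $\Div\sigma=0$ one has $\int_{\partial B_\rho}\sigma\nu\de\mathcal H^1=0$, and a further integration by parts combined with the symmetry $\sigma=\sigma^T$ (from~(C1)) produces $\int_{\partial B_\rho}(\sigma\nu)\cdot Sx\de\mathcal H^1=0$ for every $S\in\R^{2\times 2}_{\skw}$. Hence we may replace $v$ by $v-Sx-c$ in the boundary integral for any $S\in\R^{2\times 2}_{\skw}$ and $c\in\R^2$. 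Choosing $S$ as the Korn skew of $v$ in $B_\rho$ and $c$ as the mean of $v-Sx$, Korn's inequality, Poincar\'e's inequality, the scaled trace inequality, and~(C3) together yield
\begin{equation*}
\|v-Sx-c\|_{L^2(\partial B_\rho)}\le C\rho^{1/2}\|\nabla v-S\|_{L^2(B_\rho)}\le C\rho^{1/2}\sqrt{\mathcal E_\gamma}.
\end{equation*}

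The main obstacle is the matching $L^2$-bound on $\sigma\nu$ on $\partial B_\rho$: a crude estimate using $\|K\|_{L^1}$ would introduce a factor of $1/\alpha$ (since $\gamma_\alpha\sim 2\pi/\alpha$ as $\alpha\to 0$ by Remark~\ref{rem:gamma-a}), which would ruin the uniform constant. To avoid this, I exploit the self-similarity $K(x)=\rho^{\alpha-2}Q^{1-\alpha}_1(x/\rho)$ together with the $(-1)$-homogeneity of $\eta_\xi$ granted by~\eqref{eq:eta-decay} to obtain $(K*\eta_\xi)(x)=\rho^{\alpha-1}(Q^{1-\alpha}_1*\eta_\xi)(x/\rho)$, reducing matters to the uniform-in-$\alpha$ bound $|Q^{1-\alpha}_1*\eta_\xi|\le C|\xi|$ on $\mathbb S^1$. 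This is proved by splitting the convolution into two pieces: where the kernel variable stays bounded away from $0$, in which case~\eqref{eq:Qsd} and Remark~\ref{rem:gamma-a} give $Q^{1-\alpha}_1\le C\alpha$ pointwise while $1/|y|$ is locally integrable, and where the kernel variable is close to $0$, in which case $|\eta_\xi|$ is bounded on the relevant annulus and the kernel contributes only via the uniform bound $\|Q^{1-\alpha}_1\|_{L^1(\R^2)}\le C$ (itself a consequence of~\eqref{eq:Qsd} and Remark~\ref{rem:gamma-a}). Consequently $\|\sigma\nu\|_{L^2(\partial B_\rho)}\le C|\xi|\rho^{\alpha-1/2}$, and combining with the earlier estimate, using $\rho\in(0,1)$, and applying Young's inequality gives $|\mathcal R|\le C|\xi|\rho^\alpha\sqrt{\mathcal E_\gamma}\le C|\xi|\sqrt{\mathcal E_\gamma}\le C|\xi|^2+\mathcal E_\gamma$, which completes the proof.
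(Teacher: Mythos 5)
Your proof is correct in its main thrust but takes a genuinely different route from the paper's. The paper passes to the equivalent $L^2$-formulation $\hat\Psi(\xi,\alpha,\rho)$, takes its minimizer $\hat\beta$, performs a dyadic annular decomposition of $B_\rho$ to locate an annulus $C_{\overline k}$ where $\hat\beta$ and $\mathcal I^\alpha_\rho\eta_\xi$ have comparable (and hence $O(|\xi|^2)$) energy, and then glues $\hat\beta$ on an inner ball to $\mathcal I^\alpha_\rho\eta_\xi$ on the outer region via a cutoff to build an $H^1_0(B_\rho)$-corrector. You instead expand the quadratic form directly around $\eta_\xi$ and estimate the cross term $\mathcal R$ by a boundary-layer argument on $\partial B_\rho$. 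Both proofs exploit the same two structural facts -- the orthogonality from $\Div(\C\eta_\xi)=0$ and a Korn-type inequality -- but the paper's $H^1_0$-corrector makes the boundary term vanish, whereas you keep it and pair the scaled-trace bound $\|v-Sx-c\|_{L^2(\partial B_\rho)}\lesssim\rho^{1/2}\sqrt{\mathcal E_\gamma}$ against the pointwise bound $\|\sigma\nu\|_{L^2(\partial B_\rho)}\lesssim|\xi|\rho^{\alpha-1/2}$. Your scaling identity $\mathcal I^\alpha_\rho\eta_\xi(x)=\rho^{\alpha-1}\mathcal I^\alpha_1\eta_\xi(x/\rho)$ for $|x|=\rho$ and the uniform bound $|\mathcal I^\alpha_1\eta_\xi|\le C|\xi|$ on $\mathbb S^1$ are both correct (the latter is precisely~\eqref{eq:Ia-eta-est}, whose proof works for $\rho=1$). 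Your version makes the provenance of the $C|\xi|^2$ error term more transparent -- it is literally a boundary term of size $\rho^\alpha\le 1$ -- while the paper's version sidesteps all boundary-regularity issues at the cost of the dyadic search for $\overline k$.

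Two steps need more care than the write-up gives them. First, ``after a routine approximation we may assume $u$ is smooth'' is not routine: approximating $u$ in $W^{1,1}(B_{2\rho})$ by smooth $u_n$ does \emph{not} give $L^2$-convergence of $(K*\nabla u_n)^{\mathrm{sym}}$, so the energies need not converge, and the reduction to smooth $u$ is not justified. The fix is that you do not need it: the finite-energy assumption already gives $(\nabla v)^{\mathrm{sym}}=(K*\gamma)^{\mathrm{sym}}\in L^2(B_\rho)$ directly, and one can apply the $W^{1,1}$-version of Korn's inequality (i.e.\ Ne\v cas-type: $v\in W^{1,1}$ with $(\nabla v)^{\mathrm{sym}}\in L^2$ implies $\nabla v-S\in L^2$ for some $S\in\R^{2\times 2}_{\skw}$, with a $\rho$-independent constant on balls by scaling). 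Note this is strictly stronger than the $L^2$-hypothesis version recorded in Proposition~\ref{prop:inc-Korn}, so it should be cited explicitly; alternatively, invoking $\Psi=\hat\Psi$ as the paper does puts you in the $L^2$-framework from the start and avoids the issue. Second, the identity $\mathcal R=\int_{\partial B_\rho}(\sigma\nu)\cdot v\,\de\mathcal H^1$ and the vanishing of $\int_{\partial B_\rho}\sigma\nu$ and $\int_{\partial B_\rho}(\sigma\nu)\cdot Sx$ involve the singular origin of $\sigma$ and must be obtained by integrating by parts on $B_\rho\setminus B_\varepsilon$ and sending $\varepsilon\to 0$; the decay $|\sigma(x)|\lesssim|\xi|\,|x|^{\alpha-1}$ makes the inner boundary terms $O(\varepsilon^\alpha)$, so the argument closes, but this should be stated.
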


Note that we need a restriction on the range of $\alpha$ to ensure that the constant in the statement is independent of $\alpha$. In view of Lemma~\ref{lem:lower-est}, in order to prove~\eqref{eq:point-conv}, it suffices to show that for all $\rho\in (0,1)$ there exists
$$
\lim_{\alpha\to 0}\alpha\int_{B_\rho}\C\mathcal I^\alpha_\rho\eta_\xi(x):\mathcal I^\alpha_\rho\eta_\xi(x)\,\de x= \frac{1}{2}\int_0^{2\pi}\C \Gamma_\xi(\theta):\Gamma_\xi(\theta)\,\de \theta.
$$
In view of~\eqref{eq:eta-decay}--\eqref{eq:eta-est}, we get that $\eta_\xi$ satisfies~\eqref{eq:Ia-hyp}. Therefore, its Riesz potential $\mathcal I^\alpha\eta_\xi$ is well-defined by Proposition~\ref{prop:Riesz}. We first prove that for all $\xi\in\R^2$ and $\rho\in (0,1)$
\begin{align*}
\lim_{\alpha\to 0}\alpha\int_{B_\rho}\C \mathcal I^\alpha\eta_\xi(x):\mathcal I^\alpha\eta_\xi(x)\,\de x=\frac{1}{2}\int_0^{2\pi}\C \Gamma_\xi(\theta):\Gamma_\xi(\theta)\,\de \theta, 
\end{align*}
and then we use some uniform estimates to pass from $\mathcal I^\alpha\eta_\xi$ to $\mathcal I^\alpha_\rho\eta_\xi$. 

\begin{lemma}\label{prop:Ia-eta-con}
Let $\xi\in\R^2$ and $\rho\in (0,1)$ be fixed. We have
\begin{align}\label{eq:Ia-eta-core-point}
\lim_{\alpha\to 0}\alpha\int_{B_\rho}\C \mathcal I^\alpha\eta_\xi(x):\mathcal I^\alpha\eta_\xi(x)\,\de x=\frac{1}{2}\int_0^{2\pi}\C \Gamma_\xi(\theta):\Gamma_\xi(\theta)\,\de \theta.
\end{align}
Moreover, if $(\rho_\alpha)_{\alpha\in (0,1)}\subset (0,1)$ is such that 
$$\alpha\log\rho_\alpha\to 0\quad\text{as $\alpha\to 0$},$$
then
\begin{equation}\label{eq:Ia-eta-core-unif}
\lim_{\alpha\to 0}\sup_{\xi\in\mathbb S^1}\left|\alpha\int_{B_{\rho_\alpha}}\C \mathcal I^\alpha\eta_\xi(x):\mathcal I^\alpha\eta_\xi(x)\,\de x-\frac{1}{2}\int_0^{2\pi}\C \Gamma_\xi(\theta):\Gamma_\xi(\theta)\,\de \theta \right|=0.
\end{equation}
\end{lemma}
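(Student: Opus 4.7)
The plan is to exploit the $(-1)$-homogeneity of $\eta_\xi$ to reduce the bulk integral to a one-dimensional angular integral on $\mathbb S^1$, and then to analyze the pointwise behavior of $\mathcal I^\alpha\eta_\xi$ on the unit circle as $\alpha\to 0$.

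\emph{Step 1 (Reduction by homogeneity).} By~\eqref{eq:eta-decay}, $\eta_\xi(\lambda x)=\lambda^{-1}\eta_\xi(x)$ for $\lambda>0$; the change of variables $y\mapsto\lambda y$ in the definition of $\mathcal I^\alpha$ immediately yields $\mathcal I^\alpha\eta_\xi(\lambda x)=\lambda^{\alpha-1}\mathcal I^\alpha\eta_\xi(x)$. Writing $e_\theta\coloneqq(\cos\theta,\sin\theta)$ and $G^\alpha_\xi(\theta)\coloneqq \mathcal I^\alpha\eta_\xi(e_\theta)$, passage to polar coordinates gives
\begin{equation*}
\alpha\int_{B_\rho}\C\mathcal I^\alpha\eta_\xi(x):\mathcal I^\alpha\eta_\xi(x)\,\de x
=\alpha\int_0^\rho r^{2\alpha-1}\,\de r\int_0^{2\pi}\C G^\alpha_\xi(\theta):G^\alpha_\xi(\theta)\,\de\theta
=\frac{\rho^{2\alpha}}{2}\int_0^{2\pi}\C G^\alpha_\xi(\theta):G^\alpha_\xi(\theta)\,\de\theta.
\end{equation*}

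\emph{Step 2 (Uniform limit of $G^\alpha_\xi$).} The main task is to show that $G^\alpha_\xi(\theta)\to\Gamma_\xi(\theta)$ as $\alpha\to 0$, uniformly in $(\theta,\xi)\in[0,2\pi)\times\mathbb S^1$, together with a uniform bound $|G^\alpha_\xi(\theta)|\le C$. For fixed $\delta\in(0,1/4)$ and $R>2$, I split the integral defining $\gamma_\alpha G^\alpha_\xi(\theta)$ over $B_\delta(e_\theta)$, $B_R\setminus B_\delta(e_\theta)$, and $\R^2\setminus B_R$. Using the Taylor expansion $\eta_\xi(y)=\Gamma_\xi(\theta)+O(|y-e_\theta|)$ on $B_\delta(e_\theta)$ (valid since $\eta_\xi$ is smooth on $\R^2\setminus\{0\}$ with $|\nabla\eta_\xi|\le C|\xi|$ uniformly on a neighborhood of $\mathbb S^1$, by~\eqref{eq:eta-decay}--\eqref{eq:eta-est}), the constant piece contributes
\begin{equation*}
\Gamma_\xi(\theta)\cdot\frac{1}{\gamma_\alpha}\int_{B_\delta(e_\theta)}|e_\theta-y|^{\alpha-2}\,\de y=\Gamma_\xi(\theta)\cdot\frac{2\pi\delta^\alpha}{\alpha\gamma_\alpha}\longrightarrow\Gamma_\xi(\theta)
\end{equation*}
as $\alpha\to 0$, by the identity $\alpha\gamma_\alpha\to 2\pi$ of Remark~\ref{rem:gamma-a}, while the Lipschitz remainder is bounded by $C|\xi|\gamma_\alpha^{-1}\int_0^\delta r^\alpha\,\de r=O(\alpha\delta)$. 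On the middle annulus, the lower bound $|e_\theta-y|\ge\delta$ combined with $|\eta_\xi(y)|\le K|\xi|/|y|$ yields a contribution of order $\alpha R\,\delta^{\alpha-2}$. On the far field, $|e_\theta-y|\ge |y|/2$ gives a contribution of order $\alpha R^{\alpha-1}$. Letting $\alpha\to 0$ first, then $R\to\infty$, then $\delta\to 0$, delivers both the claimed uniform convergence and the uniform bound; linearity of $\xi\mapsto\Gamma_\xi$ together with~\eqref{eq:eta-est} guarantees that all the constants above are uniform in $\xi\in\mathbb S^1$.

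\emph{Step 3 (Conclusion).} By Step 2 and the uniform $L^\infty$-bound on $G^\alpha_\xi$, dominated convergence yields $\int_0^{2\pi}\C G^\alpha_\xi:G^\alpha_\xi\,\de\theta\to\int_0^{2\pi}\C\Gamma_\xi:\Gamma_\xi\,\de\theta$, uniformly in $\xi\in\mathbb S^1$. Combined with $\rho^{2\alpha}\to 1$ for fixed $\rho\in(0,1)$ via Step 1, this proves~\eqref{eq:Ia-eta-core-point}. For~\eqref{eq:Ia-eta-core-unif}, the assumption $\alpha\log\rho_\alpha\to 0$ implies $\rho_\alpha^{2\alpha}=e^{2\alpha\log\rho_\alpha}\to 1$, and since all estimates above are uniform in $\xi\in\mathbb S^1$, the supremum in~\eqref{eq:Ia-eta-core-unif} vanishes. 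The delicate point is Step 2: it hinges precisely on $\alpha\gamma_\alpha\to 2\pi$, which converts the $(2-\alpha)$-singular Riesz kernel into an asymptotic Dirac mass of total mass one on the inner ball and allows the recovery of $\Gamma_\xi(\theta)$ exactly as the leading term, with no surplus prefactor.
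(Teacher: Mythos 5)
Your proposal follows essentially the same strategy as the paper: use the $(-1)$-homogeneity of $\eta_\xi$ to reduce to the angular integral via polar coordinates (this is exactly~\eqref{eq:Ia-eta-scal} and~\eqref{help1}), establish uniform pointwise convergence $\mathcal I^\alpha\eta_\xi\to\eta_\xi$ on $\mathbb S^1$ by a near/far decomposition of the Riesz integral (this is Lemma~\ref{lemma 4.6}), and conclude by dominated convergence with the factor $\rho^{2\alpha}\to1$. The only differences are minor technical choices in Step~2. The paper cuts out an explicit ball $B_\sigma$ around the origin to isolate the $1/|y|$-singularity of $\eta_\xi$, whereas you absorb the origin into a middle region $B_R\setminus B_\delta(e_\theta)$ and rely on local integrability of $1/|y|$; both work. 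You also appeal to Lipschitz continuity of $\eta_\xi$ near $\mathbb S^1$ rather than mere continuity; this requires a bound on $\nabla\eta_\xi$ near the unit circle, which the paper does not state explicitly but which indeed follows from the explicit form~\eqref{eq:eta-decay} and smoothness of $\Gamma_\xi$, so this is harmless, though the paper's argument via uniform continuity of the two basis fields $\eta_{e_1},\eta_{e_2}$ and linearity in $\xi$ is marginally more economical. In both routes the asymptotic identity $\alpha\gamma_\alpha\to 2\pi$ from Remark~\ref{rem:gamma-a} is the mechanism by which the near-singularity integral becomes an asymptotic averaging operator of unit mass, and you have correctly identified that as the crux.
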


Next, we control the difference of $\mathcal I^\alpha_\varrho\eta_\xi(\omega)$ and $\mathcal I^\alpha\eta_\xi(\omega)$ as $\alpha\to 0$, uniformly in $(\xi,\varrho, \omega)\in\mathbb S^1\times (1,\infty)\times\mathbb S^1$. Later, this will be applied for $\varrho = \frac{\rho}{|x|}$, for $\rho\in (0,1)$ and $x\in B_\rho \setminus \{0\}$.

\begin{lemma}\label{lemma 4.7}
Let $\xi\in\R^2$, $\alpha\in \left(0,\frac{1}{2}\right)$, $\varrho\in (1,\infty)$, and $\omega\in\mathbb S^1$ be fixed. There exists a constant $C>0$, independent of $\xi$, $\alpha$, $\varrho$, and $\omega$, such that 
\begin{align}\label{eq:Iad-Ia-est}
|\mathcal I^\alpha_{\varrho}\eta_\xi(\omega)-\mathcal I^\alpha\eta_\xi(\omega)|\le C|\xi|\alpha.
\end{align}
\end{lemma}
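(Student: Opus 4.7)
The plan is to estimate the difference directly via a pointwise bound on the kernel discrepancy, combined with the explicit asymptotic behavior of $\eta_\xi$. First I would recall that the classical Riesz potential has the convolution representation $\mathcal I^\alpha f = \frac{1}{\gamma_\alpha|\cdot|^{2-\alpha}}*f$; together with $\mathcal I^\alpha_\varrho f = Q^{1-\alpha}_\varrho * f$ and the change of variables $z=\omega-y$, this will yield
\begin{equation*}
\mathcal I^\alpha_\varrho \eta_\xi(\omega) - \mathcal I^\alpha \eta_\xi(\omega) = \int_{\R^2} D^\alpha_\varrho(z)\, \eta_\xi(\omega - z) \,\de z,\quad D^\alpha_\varrho(z):=Q^{1-\alpha}_\varrho(z)-\frac{1}{\gamma_\alpha|z|^{2-\alpha}}.
\end{equation*}
Using~\eqref{eq:Qsd} and the identity $\int_{t}^\infty r^{\alpha-3}\,\de r = \frac{1}{(2-\alpha)t^{2-\alpha}}$, I would rewrite $D^\alpha_\varrho(z)=\frac{2-\alpha}{\gamma_\alpha}\int_{|z|}^\infty\frac{\overline w_\varrho(r)-1}{r^{3-\alpha}}\,\de r$; exploiting $\overline w_\varrho\equiv 1$ on $[0,\varrho/2]$, $0\le\overline w_\varrho\le 1$, and $\supp\overline w_\varrho\subset[0,\varrho)$, this should yield the pointwise bound
\begin{equation*}
|D^\alpha_\varrho(z)|\le \frac{1}{\gamma_\alpha}\min\bigl\{2^{2-\alpha}\varrho^{\alpha-2},\,|z|^{\alpha-2}\bigr\}.
\end{equation*}

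The estimate on the difference will then be obtained by combining this with the a priori bound $|\eta_\xi(y)|\le K|\xi|/|y|$ from~\eqref{eq:eta-decay}--\eqref{eq:eta-est}, splitting $\R^2 = B_{\varrho/2}\cup(\R^2\setminus B_{\varrho/2})$. On $B_{\varrho/2}$, the constant bound on $|D^\alpha_\varrho|$ combined with $\int_{B_{\varrho/2}}|\omega-z|^{-1}\,\de z\le 2\pi(1+\varrho/2)$ (which follows from the substitution $y=z-\omega$ together with $\int_{B_R}|y|^{-1}\,\de y=2\pi R$) will yield a contribution of order $|\xi|\varrho^{\alpha-1}/\gamma_\alpha\le C|\xi|/\gamma_\alpha$, using $\varrho>1$. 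For the tail $\{|z|>\varrho/2\}$ I would further split at $|z|=2$: when $\varrho/2<|z|\le 2$ (only possible if $\varrho<4$), the factor $|z|^{\alpha-2}$ is bounded by $(\varrho/2)^{\alpha-2}$ and $\int_{B_2}|\omega-z|^{-1}\,\de z$ is finite; when $|z|>2$, one has $|\omega-z|\ge|z|/2$ and the integrand is dominated by $2K|\xi||z|^{\alpha-3}$, whose integral is controlled uniformly for $\alpha\in(0,1/2)$. Each piece will thus be bounded by $C|\xi|/\gamma_\alpha$ with $C$ independent of $\varrho>1$ and $\alpha\in(0,1/2)$.

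Summing the two contributions and using the uniform bound $1/\gamma_\alpha\le C\alpha$ on $\alpha\in(0,1/2)$, which is a consequence of $\alpha\gamma_\alpha\to 2\pi$ from Remark~\ref{rem:gamma-a}, will give the claimed estimate. The main technical subtlety is that $\eta_\xi$ decays only as $1/|y|$ at infinity, so the integrand is barely integrable in the tail; integrability is rescued precisely by the extra $|z|^{\alpha-2}$ factor inherited from the Riesz kernel, and the uniformity in $\varrho>1$ must be preserved by carefully matching the cutoff scale $\varrho/2$ with the domain decomposition.
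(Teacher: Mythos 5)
Your proof is correct and follows essentially the same strategy as the paper's: you write the difference as an integral of $\eta_\xi$ against the kernel discrepancy, exploit that the discrepancy vanishes on the scale $\varrho/2$ (via $\overline w\equiv 1$ on $[0,\tfrac12]$), use the decay $|\eta_\xi(y)|\le K|\xi|/|y|$, split into near/far regions, and finish with $1/\gamma_\alpha\le C\alpha$ on $(0,\tfrac12)$. The only cosmetic difference is in the outer decomposition — the paper splits the complement of $B_{\varrho/2}(\omega)$ at $|y|=\varrho$, while you split at the fixed threshold $|z|=2$ — but both yield the same uniform bound in $\varrho>1$ and $\alpha\in(0,\tfrac12)$.
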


The proofs of the previous three statements are deferred to Sections~\ref{auxi-proof}--\ref{auxi-proof3}, respectively. As the previous lemma is only formulated for $\omega \in \mathbb{S}^1$, we need to make use of the following 
scaling arguments. Using~\eqref{eq:eta-decay}, one can check that for all $\xi\in\R^2$ and $\alpha\in (0,1)$ we have
\begin{equation}\label{eq:Ia-eta-scal}
\mathcal I^\alpha\eta_\xi(x)=\frac{1}{|x|^{1-\alpha}}\mathcal I^\alpha\eta_\xi\left(\frac{x}{|x|}\right)\quad\text{for all $x\in\R^2\setminus\{0\}$}.
\end{equation}
Moreover, for $\rho \in (0,1)$ it holds
\begin{equation}\label{eq:Ia-eta-scal2}
\mathcal I^\alpha_\rho\eta_\xi(x)=\frac{1}{|x|^{1-\alpha}}\mathcal I^\alpha_\frac{\rho}{|x|}\eta_\xi\left(\frac{x}{|x|}\right)\quad\text{for all $x\in\R^2\setminus\{0\}$}.
\end{equation}
To see this, in view of~\eqref{barwdef},~\eqref{eq:Qsd}, and ~\eqref{eq:eta-decay}, for all $x\in \R^2\setminus\{0\}$ and $\lambda>0$ we calculate 
\begin{align*}
\mathcal I^\alpha_\rho\eta_\xi(\lambda x)&=(Q^{1-\alpha}_\rho*\eta_\xi)(\lambda x)=\int_{\R^2}\eta_\xi(y)Q^{1-\alpha}_\rho(\lambda x-y)\,\de y\\
&=\lambda^2\int_{\R^2}\eta_\xi(\lambda z)Q^{1-\alpha}_\rho(\lambda x-\lambda z)\,\de z=\lambda\int_{\R^2}\eta_\xi(z)Q^{1-\alpha}_\rho(\lambda x-\lambda z)\,\de z\\
&=\lambda\frac{2-\alpha}{\gamma_{\alpha}}\int_{\R^2}\eta_\xi(z)\int_{\lambda|x-z|}^\infty \frac{\overline w_\rho(r)}{r^{3-\alpha}}\,\de r\,\de z\\
&=\frac{1}{\lambda^{1-\alpha}}\frac{2-\alpha}{\gamma_{\alpha}}\int_{\R^2}\eta_\xi(z)\int_{|x-z|}^\infty \frac{\overline w_\rho(\lambda s)}{s^{3-\alpha}}\,\de s\,\de z\\
&=\frac{1}{\lambda^{1-\alpha}}\frac{2-\alpha}{\gamma_{\alpha}}\int_{\R^2}\eta_\xi(z)\int_{|x-z|}^\infty \frac{\overline w_\frac{\rho}{\lambda}(s)}{s^{3-\alpha}}\,\de s \,\de z=\frac{1}{\lambda^{1-\alpha}}\mathcal I^\alpha_\frac{\rho}{\lambda}\eta_\xi(x).
\end{align*}
As a final preparation, we note that there exists a constant $C>0$, independent of $\xi$, $\alpha$, $\rho$, and $\omega$, such that for all $\xi\in\R^2$, $\alpha\in (0,\frac{1}{2})$, $\rho\in (0,1)$, and $\omega\in \mathbb S^1$ it holds 
\begin{align}\label{eq:Ia-eta-est}
|\mathcal I^\alpha\eta_\xi(\omega)|&\le C|\xi|, \qquad |\mathcal I^\alpha_\rho\eta_\xi(\omega)|\le C|\xi|.
\end{align}
In fact, by using~\eqref{eq:eta-decay}--\eqref{eq:eta-est}, Proposition~\ref{prop:Riesz-comp}, and $\gamma_1=2\pi$ we derive that
\begin{align*}
|\mathcal I^\alpha\eta_\xi(\omega)|&\le \frac{K|\xi|}{\gamma_{\alpha}}\int_{\R^2}\frac{1}{|y||\omega-y|^{2-\alpha}}\,\de y=\frac{2\pi K|\xi|}{\gamma_{1+\alpha}}\quad\text{for all $\xi\in\R^2$, $\alpha\in (0,1)$, and $\omega\in\mathbb S^1$}.
\end{align*}
This yields the first part of~\eqref{eq:Ia-eta-est}. A similar argument, using~\eqref{eq:Qsd-est}, yields the second part of~\eqref{eq:Ia-eta-est}. In particular, by~\eqref{eq:Ia-eta-scal} and~\eqref{eq:Ia-eta-scal2}, we derive that for all $\xi\in\R^2$, $\alpha\in (0,\frac{1}{2} )$, $\rho \in (0,1)$, and $x\in \R^2$ it holds 
\begin{align}\label{eq:Ia-eta-est2}
|\mathcal I^\alpha\eta_\xi(x)|&\le \frac{C|\xi|}{|x|^{1-\alpha}}, \qquad |\mathcal I^\alpha_\rho\eta_\xi(x)|\le \frac{C|\xi|}{|x|^{1-\alpha}}.
\end{align}

We can now finally prove Proposition~\ref{prop:cell-formula}.

\begin{proof}[Proof of Proposition~\ref{prop:cell-formula}]
Let $\xi\in\R^2$ be fixed. By~\eqref{eq:Ia-eta-scal2}, for all $\alpha\in (0,1)$ and $\rho\in (0,1)$ we can write
\begin{align*}
\int_{B_\rho}\C \mathcal I^\alpha_\rho\eta_\xi(x):\mathcal I^\alpha_\rho\eta_\xi(x)\,\de x&=\int_{B_\rho}\frac{1}{|x|^{2-2\alpha}}\C \mathcal I^\alpha_\frac{\rho}{|x|}\eta_\xi\left(\frac{x}{|x|}\right):\mathcal I^\alpha_\frac{\rho}{|x|}\eta_\xi\left(\frac{x}{|x|}\right)\,\de x.
\end{align*}
By~\eqref{eq:Iad-Ia-est},~\eqref{eq:Ia-eta-est}, and $(\rm C3)$ for all $\alpha\in (0,1)$, $\rho\in (0,1)$, and $x\in B_\rho\setminus\{0\}$ we have
\begin{align*}
\left|\C \mathcal I^\alpha_\frac{\rho}{|x|}\eta_\xi\left(\frac{x}{|x|}\right):\mathcal I^\alpha_\frac{\rho}{|x|}\eta_\xi\left(\frac{x}{|x|}\right)-\C \mathcal I^\alpha\eta_\xi\left(\frac{x}{|x|}\right):\mathcal I^\alpha\eta_\xi\left(\frac{x}{|x|}\right)\right|\le 2\nu_2C^2|\xi|^2\alpha,
\end{align*}
for a constant $C>0$ independent of $\xi$, $\alpha$, $\rho$, and $x$. Therefore, using~\eqref{eq:Ia-eta-scal}--\eqref{eq:Ia-eta-scal2}, for all $\alpha\in (0,1)$, and $\rho\in (0,1)$ we have
\begin{align*}
&\left|\alpha\int_{B_\rho}\C \mathcal I^\alpha_\rho\eta_\xi(x):\mathcal I^\alpha_\rho\eta_\xi(x)\,\de x- \frac{1}{2}\int_0^{2\pi}\C \Gamma_\xi(\theta):\Gamma_\xi(\theta)\,\de \theta \right|\\
&\le \left|\alpha\int_{B_\rho}\C \mathcal I^\alpha\eta_\xi(x):\mathcal I^\alpha\eta_\xi(x)\,\de x- \frac{1}{2}\int_0^{2\pi}\C \Gamma_\xi(\theta):\Gamma_\xi(\theta)\,\de \theta \right|+2\nu_2C^2|\xi|^2\alpha^2\int_{B_\rho}\frac{1}{|x|^{2-2\alpha}}\,\de x\\
&= \left|\alpha\int_{B_\rho}\C \mathcal I^\alpha\eta_\xi(x):\mathcal I^\alpha\eta_\xi(x)\,\de x-\frac{1}{2}\int_0^{2\pi}\C \Gamma_\xi(\theta):\Gamma_\xi(\theta)\,\de \theta \right|+2\pi \nu_2C^2\rho^{2\alpha}|\xi|^2\alpha.
\end{align*}
This implies both~\eqref{eq:point-conv} and~\eqref{eq:unif-conv}, thanks to Lemma~\ref{lem:lower-est} and Lemma~\ref{prop:Ia-eta-con}.
\end{proof}

\begin{remark}\label{rem:classic-Riesz}

(i) Without giving details, we mention that the cell formula can also be defined by introducing an additional parameter $r>0$ as
\begin{align*}
&\mathcal A(\xi,\rho,r)\coloneqq \left\{\beta\in L^1(B_{r+\rho};\R^{2\times 2}): \Curl\beta=\xi\delta_0\text{ in $\mathcal D'(B_{r+\rho};\R^2)$}\right\},\\
&\Psi(\xi,\alpha,\rho,r)\coloneqq\inf_{\beta\in\mathcal A(\xi,\rho,r)}\frac{1}{2}\int_{B_r}\C\mathcal I^\alpha_\rho\beta(x):\mathcal I^\alpha_\rho\beta(x)\,\de x.
\end{align*}
In this case, we still have $\psi(\xi)=\lim_{\alpha\to 0}2\alpha \Psi(\xi,\alpha,\rho,r)$ for all $\rho \in (0,1)$ and $r>0$. 

(ii) An analogous result as in Proposition~\ref{prop:cell-formula} can be shown for the classical Riesz potential $\mathcal I^\alpha$. More precisely, given 
\begin{align}\label{New equa}
&\mathcal A^{\rm Riesz}(\xi)\coloneqq \left\{\beta\in L^1_{\loc}(\R^2;\R^2): \Curl\beta=\xi\delta_0\text{ in $\mathcal D'(\R^2;\R^2)$}\right\}, \notag \\
&\Psi^{\rm Riesz}(\xi,\alpha,r)\coloneqq\inf_{\beta\in\mathcal A^{\rm Riesz}(\xi)}\frac{1}{2}\int_{B_r}\C\mathcal I^\alpha\beta(x):\mathcal I^\alpha\beta(x)\,\de x,
\end{align}
we have $\psi(\xi)=\lim_{\alpha\to 0}2\alpha \Psi^{\rm Riesz}(\xi,\alpha,r)$ for all $r>0$. Formally, this follows by performing the arguments of Proposition~\ref{prop:cell-formula} in the case $\rho=\infty$. This also explains why in this case the condition $\Curl\beta=\xi\delta_0$ is required on the \emph{entire} $\R^2$.
 \end{remark}

\subsection{Proof of Lemma~\ref{lem:lower-est}}\label{auxi-proof}

We start by recalling the Korn's inequality for incompatible fields of~\cite{GLP}, which will be used in the following.

\begin{proposition}[{\cite[Theorem~11]{GLP}}]\label{prop:inc-Korn}
Let $\Omega\subset\R^2$ be an open, bounded, simply connected set with Lipschitz boundary. There exists a constant $C=C(\Omega)>0$ with the following property: for all $\beta\in L^2(\Omega;\R^{2\times 2})$ with $\mu\coloneqq\Curl\beta\in\mathcal M(\Omega;\R^2)$, there is a skew-symmetric matrix $S\in \R^{2\times 2}_{\skw}$ satisfying
$$\|\beta-S\|_{L^2(\Omega)}\le C(\|\beta^{\sym}\|_{L^2(\Omega)}+|\mu|(\Omega)).$$
\end{proposition}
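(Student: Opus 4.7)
My plan centers on the two-dimensional identity $\Curl(aJ)=-\nabla a$ for scalar $a$, which lets me reduce the Korn-type estimate to a scalar Poincar\'e-type inequality. Since any $2\times 2$ skew-symmetric matrix is a scalar multiple of $J$, there exists $a\in L^2(\Omega)$ with $\beta^{\skw}=aJ$. Combining this identity with $\Curl\beta=\mu$ yields
\begin{equation*}
\nabla a=\Curl\beta^{\sym}-\mu\qquad\text{in $\mathcal{D}'(\Omega;\R^2)$.}
\end{equation*}
Taking $S=\bar a\,J$ for a suitable constant $\bar a\in\R$, the claim reduces to the scalar Poincar\'e-type bound
\begin{equation*}
\|a-\bar a\|_{L^2(\Omega)}\le C\bigl(\|\beta^{\sym}\|_{L^2(\Omega)}+|\mu|(\Omega)\bigr),
\end{equation*}
so the task becomes controlling $a$ in $L^2(\Omega)/\R$ from the distributional identity above.

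To establish this scalar estimate, I would first regularise: mollifying $\mu$ on a slight Lipschitz enlargement of $\Omega$ (to accommodate the convolution near $\partial\Omega$) produces smooth $\mu_\varepsilon$ with $|\mu_\varepsilon|(\Omega)\le|\mu|(\Omega)+o(1)$, and correspondingly $\beta$ is approximated by smooth $\beta_\varepsilon$ with $\Curl\beta_\varepsilon=\mu_\varepsilon$ and $\beta_\varepsilon\to\beta$ in $L^2(\Omega)$. For the regularised problem, I would split the contributions to $a_\varepsilon$ coming from the two summands of the right-hand side: the $\Curl\beta_\varepsilon^{\sym}$ piece lies in $H^{-1}(\Omega;\R^2)$ with norm bounded by $\|\beta_\varepsilon^{\sym}\|_{L^2}$, so by Ne\v{c}as' lemma on the simply connected Lipschitz domain $\Omega$ it yields an $L^2/\R$ contribution to $a_\varepsilon$ controlled by $\|\beta^{\sym}\|_{L^2}$; the $\mu_\varepsilon$ piece is handled via its scalar Newtonian potential, whose gradient lies in $L^1(\Omega)$ with norm $\le C|\mu_\varepsilon|(\Omega)$, and the critical two-dimensional Sobolev embedding $W^{1,1}(\Omega)\hookrightarrow L^2(\Omega)$ converts this into the desired $L^2$ bound by $|\mu|(\Omega)+o(1)$. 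Summing the two contributions and passing to the limit by weak-$L^2$ compactness together with lower semicontinuity of both terms on the right-hand side completes the proof.

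The main obstacle is that the two summands in $\nabla a=\Curl\beta^{\sym}-\mu$ are not separately curl-free, so one cannot naively build $a$ as the sum of two independent scalar potentials of the two data. The splitting has to be performed only after projecting onto the curl-free subspace of $L^2$, carefully mixing an $H^{-1}$-to-$L^2$ estimate (for the symmetric-part contribution) with an $L^1$-to-$L^2$ estimate stemming from the two-dimensional critical Sobolev embedding (for the measure contribution). The simple-connectedness of $\Omega$ is essential to guarantee that a global scalar potential $a$ exists, the Lipschitz regularity of $\partial\Omega$ supports the extension and mollification arguments, and the inequality is intrinsically two-dimensional, since $W^{1,1}\hookrightarrow L^{d/(d-1)}$ meets $L^2$ only when $d=2$.
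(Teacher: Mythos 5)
Your reduction to the scalar Poincar\'e-type estimate is correct: with $\beta^{\skw}=aJ$ and the identity $\Curl(aJ)=-\nabla a$, the matrix inequality does reduce to proving $\|a-\bar a\|_{L^2(\Omega)}\le C\bigl(\|\beta^{\sym}\|_{L^2(\Omega)}+|\mu|(\Omega)\bigr)$ given $\nabla a=\Curl\beta^{\sym}-\mu$, and your observation that the two summands on the right are not separately curl-free is exactly the right thing to worry about. The gap is in the claimed $L^1$-to-$L^2$ gain for the measure contribution. After the Helmholtz--Hodge projection onto gradients that the split requires, the scalar potential $p_\mu$ attached to $\mu_\varepsilon$ solves $\Delta p_{\mu}=\div\mu_\varepsilon$, so $\nabla p_\mu=\nabla\Delta^{-1}\div\,\mu_\varepsilon$ is, up to a harmonic correction, a second-order Riesz transform of $\mu_\varepsilon$. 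This is a Calder\'on--Zygmund operator, bounded on $L^p$ for $1<p<\infty$ but \emph{not} from $L^1$ (or $\mathcal M$) into $L^1$; it only maps into weak-$L^1$. Hence the asserted bound $\|\nabla p_\mu\|_{L^1(\Omega)}\le C|\mu_\varepsilon|(\Omega)$ is false and the embedding $W^{1,1}(\Omega)\hookrightarrow L^2(\Omega)$ has nothing to act on. The componentwise Newtonian potential $N*\mu_\varepsilon$, whose gradient genuinely lies in $L^1$ with the right bound, is a $2\times 2$ matrix field whose rows are not gradients of scalars, and passing from it to a scalar potential reintroduces exactly the Riesz-transform step that fails.

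The obstruction is structural: a Dirac mass in $\R^2$ is not in $H^{-1}(\Omega)$, so $\|\nabla a\|_{H^{-1}}$ cannot be bounded term-by-term by $\|\Curl\beta^{\sym}\|_{H^{-1}}+|\mu|(\Omega)$, and the fact that $\Curl\beta^{\sym}-\mu$ is \emph{exactly} a gradient has to be used quantitatively, which estimating the two Hodge-projected pieces independently cannot do. The missing ingredient is a Bourgain--Brezis-type theorem: for every mean-zero $\phi\in L^2(\Omega)$ there exists $\Phi\in H^1_0(\Omega;\R^2)\cap L^\infty(\Omega;\R^2)$ with $\div\Phi=\phi$ and $\|\Phi\|_{H^1_0}+\|\Phi\|_{L^\infty}\le C\|\phi\|_{L^2}$. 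Pairing $\nabla a=\Curl\beta^{\sym}-\mu$ against such a $\Phi$ bounds $\bigl|\int_\Omega a\,\phi\bigr|$ by $C\bigl(\|\beta^{\sym}\|_{L^2}+|\mu|(\Omega)\bigr)\|\phi\|_{L^2}$, and the estimate follows by duality. The $L^\infty$ control on $\Phi$ does \emph{not} come from the critical Sobolev embedding (indeed $H^1(\R^2)\not\hookrightarrow L^\infty$) and is genuinely deeper; this is the substance of the result cited from~\cite{GLP}, which the present paper invokes without reproducing the proof.
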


For the proof of Lemma~\ref{lem:lower-est}, we need a further preparation. By Lemma~\ref{lem:curl-Iadxi}, for all $\beta\in \mathcal A(\xi,\rho)$ we have
$$\hat\beta\coloneqq \mathcal I^\alpha_\rho\beta\in L^1(B_\rho;\R^{2\times 2}),\qquad \Curl\hat\beta=\xi Q^{1-\alpha}_\rho\text{ in $\mathcal D'(B_\rho;\R^2)$}.$$
Therefore, we can also consider the set
$$\hat{\mathcal A}(\xi,\alpha,\rho)\coloneqq \left\{\hat\beta\in L^2(B_\rho;\R^{2\times 2}): \Curl\hat\beta=\xi Q^{1-\alpha}_\rho\text{ in $\mathcal D'(B_\rho;\R^2)$}\right\},$$
and the infimum problem
\begin{equation}\label{eq:ad-hatpsi}
 \hat\Psi (\xi,\alpha,\rho)\coloneqq\inf_{\hat\beta\in\hat{\mathcal A}(\xi,\alpha,\rho)}\frac{1}{2}\int_{B_\rho}\C\hat\beta(x):\hat\beta(x)\,\de x.
\end{equation}
Comparing with~\eqref{eq:ad-psi}, we directly get the relation 
\begin{equation}\label{eq:psi-hatpsi-auch}
 \hat\Psi(\xi,\alpha,\rho)\le\Psi(\xi,\alpha,\rho).
\end{equation}
Now, we want to show that the two cell formulas actually coincide. 

\begin{lemma}
For all $\xi\in\R^2$, $\alpha\in(0,1)$, and $\rho\in (0,1)$ we have
\begin{equation}\label{eq:psi-hatpsi}
\Psi(\xi,\alpha,\rho)= \hat\Psi (\xi,\alpha,\rho)=\min_{\hat\beta\in\hat{\mathcal A}(\xi,\alpha,\rho)}\frac{1}{2}\int_{B_\rho}\C\hat\beta(x):\hat\beta(x)\,\de x.
\end{equation}
\end{lemma}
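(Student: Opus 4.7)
The inequality $\hat\Psi\le\Psi$ is recorded in~\eqref{eq:psi-hatpsi-auch}, so the proof splits into (i) showing that the infimum defining $\hat\Psi$ is attained and (ii) establishing the reverse inequality $\Psi\le\hat\Psi$.

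For (i), I would apply the direct method. The set $\hat{\mathcal A}(\xi,\alpha,\rho)$ is a non-empty affine subspace of $L^2(B_\rho;\R^{2\times 2})$: non-emptiness comes from $\mathcal I^\alpha_\rho\zeta\in\hat{\mathcal A}(\xi,\alpha,\rho)$, with $\zeta$ as in Remark~\ref{rem:zeta}(i), which also lies in $L^2$ by Lemma~\ref{lem:Iad-int}. Since $\C$ annihilates skew-symmetric matrices by (C1), I can translate any minimizing sequence $\hat\beta_n$ by matrices $S_n\in\R^{2\times 2}_{\skw}$ without changing either the curl constraint or the energy. Combining (C3) with Korn's inequality for incompatible fields (Proposition~\ref{prop:inc-Korn}), applied with $|\Curl\hat\beta_n|(B_\rho)=|\xi|\,\|Q^{1-\alpha}_\rho\|_{L^1(B_\rho)}<\infty$ by~\eqref{eq:Qsd-p}, this yields $L^2$-boundedness of $\hat\beta_n-S_n$. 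Weak $L^2$-compactness, weak continuity of $\Curl$, and convex lower semicontinuity of the energy then produce a minimizer $\hat\beta_*$.

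For (ii), fix $\hat\beta\in\hat{\mathcal A}(\xi,\alpha,\rho)$. Since $\mathcal I^\alpha_\rho\zeta$ is also admissible, the difference $\hat\beta-\mathcal I^\alpha_\rho\zeta$ is a curl-free $L^2$ field on the simply connected ball $B_\rho$, hence of the form $\nabla v$ with $v\in W^{1,2}(B_\rho;\R^2)$. I would realize $\hat\beta$ as $\mathcal I^\alpha_\rho\beta$ for a competitor $\beta=\zeta+\nabla u\in\mathcal A(\xi,\rho)$, which, via~\eqref{eq:nablasd-Isd}, amounts to finding $u$ on $B_{2\rho}$ solving $Q^{1-\alpha}_\rho*u=v$ on $B_\rho$ (modulo constants). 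I would accomplish this inversion by approximation: pick $v_n\in C^\infty(\overline{B_\rho};\R^2)$ converging to $v$ in $W^{1,2}(B_\rho;\R^2)$, extend each $v_n$ via a smooth cut-off to a Schwartz function $\tilde v_n\in\mathcal S(\R^2;\R^2)$, and set $u_n\coloneqq P^{1-\alpha}_\rho\tilde v_n$. Proposition~\ref{prop:Psd}(ii) gives $Q^{1-\alpha}_\rho*u_n=\tilde v_n=v_n$ on $B_\rho$, so that $\beta_n\coloneqq\zeta+\nabla u_n\in\mathcal A(\xi,\rho)$ satisfies $\mathcal I^\alpha_\rho\beta_n=\mathcal I^\alpha_\rho\zeta+\nabla v_n$ on $B_\rho$. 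Letting $n\to\infty$ in
\begin{equation*}
\Psi(\xi,\alpha,\rho)\le \frac{1}{2}\int_{B_\rho}\C(\mathcal I^\alpha_\rho\zeta+\nabla v_n):(\mathcal I^\alpha_\rho\zeta+\nabla v_n)\,\de x
\end{equation*}
and then taking the infimum over $\hat\beta$ concludes the proof.

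The main difficulty lies in formally inverting the convolution with $Q^{1-\alpha}_\rho$ on the bounded domain $B_\rho$: since $P^{1-\alpha}_\rho$ is only defined on $\mathcal S(\R^2)$, it cannot be applied directly to $v\in W^{1,2}(B_\rho;\R^2)$. The approximation above bypasses this issue without requiring any uniform estimate on $u_n$; only the identity $Q^{1-\alpha}_\rho*u_n=v_n$ together with $W^{1,2}$-convergence of $v_n$ enters the energy bound. The finite horizon of $Q^{1-\alpha}_\rho$ is decisive here: it ensures that the convolution on $B_\rho$ depends only on $u_n|_{B_{2\rho}}$, so that the non-locality of $P^{1-\alpha}_\rho$ outside $B_{2\rho}$ is harmless.
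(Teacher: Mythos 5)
Your proposal is correct and takes essentially the same approach as the paper's proof: the direct method with the incompatible Korn inequality (Proposition~\ref{prop:inc-Korn}) for existence, and the operator $P^{1-\alpha}_\rho$ applied to smooth approximations of the potential $v$ of $\hat\beta-\mathcal I^\alpha_\rho\zeta$ to show $\Psi\le\hat\Psi$. The only cosmetic difference is that you approximate $v$ by $v_n\in C^\infty(\overline{B_\rho})$ and then extend by a cut-off, whereas the paper directly picks $\Phi_n\in C^\infty_c(\R^2;\R^2)$ converging to $v$ in $H^1(B_\rho)$ and feeds $\Phi_n$ into $P^{1-\alpha}_\rho$; this is an equivalent route.
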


\begin{proof}
We first show that the minimum in ~\eqref{eq:ad-hatpsi} is attained. By~\eqref{eq:psi-hatpsi-auch} and $\Psi(\xi,\alpha,\rho) < \infty$, see Remark~\ref{rem:zeta}(i), we can consider a minimizing sequence $(\hat\beta_n)_n\subset\hat{\mathcal A}(\xi,\alpha,\rho)$ for $\hat{\Psi}(\xi,\alpha,\rho)$. Since by~\eqref{eq:Qsd-est} 
$$\|\Curl\hat\beta_n\|_{L^1(B_\rho)}=|\xi|\int_{B_\rho}Q^{1-\alpha}_\rho(x)\,\de x\le \frac{2\pi\rho^\alpha|\xi|}{\alpha\gamma_\alpha},$$
by employing Proposition~\ref{prop:inc-Korn} and (C1)--(C3) there exist a constant $C>0$ independent of $n$, and a sequence $(S_n)_n\subset \mathbb R^{2\times 2}_{\skw}$ such that 
$${\|\hat\beta_n-S_n\|^2_{L^2(B_\rho)} \le C\left(\int_{B_\rho}\C\hat\beta_n(x):\hat\beta_n(x)\,\de x+\|\Curl\hat\beta_n\|_{L^1(B_\rho)}^2\right)\le C.}$$
Hence, there exists $\hat\beta_\infty\in L^2(B_\rho;\R^{2\times 2})$ such that, up to a not relabeled subsequence,
$$\hat\beta_n-S_n\rightharpoonup\hat\beta_\infty\quad\text{in $L^2(B_\rho;\R^{2\times 2})$ as $n\to\infty$}.$$
Moreover, for all $\Phi\in C_c^\infty(B_\rho;\R^2)$ we have
$$\langle \Curl\hat\beta_\infty,\Phi\rangle_{\mathcal D'(B_\rho)}=\lim_{n\to\infty}\langle\Curl\hat\beta_n,\Phi\rangle_{\mathcal D'(B_\rho)}=\xi\cdot\int_{B_\rho}\Phi(x)Q^{1-\alpha}_\rho(x)\,\de x,$$
which gives that $\hat\beta_\infty\in \hat{\mathcal A}(\xi,\alpha,\rho)$. Finally, we have
\begin{align*}
\frac{1}{2}\int_{B_\rho}\C\hat\beta_\infty(x):\hat\beta_\infty(x)\,\de x&\le \liminf_{n \to\infty}\frac{1}{2}\int_{B_\rho}\C(\hat\beta_n(x)-S_n):(\hat\beta_n(x)-S_n)\,\de x\\
&=\lim_{n\to\infty}\frac{1}{2}\int_{B_\rho}\C\hat\beta_n(x):\hat\beta_n(x)\,\de x=\inf_{\hat\beta\in\hat{\mathcal A}(\xi,\alpha,\rho)}\frac{1}{2}\int_{B_\rho}\C\hat\beta(x):\hat\beta(x)\,\de x. 
\end{align*}
Hence, the minimum in the minimization problem~\eqref{eq:ad-hatpsi} is attained. 

We now show the first identity in~\eqref{eq:psi-hatpsi}. Let $\hat\beta\in \hat{\mathcal A}(\xi,\alpha,\rho)$ and $\zeta$ be the function defined in Remark~\ref{rem:zeta}(i) with $\mu\coloneqq\xi\delta_0$. Lemma~\ref{lem:curl-Iadxi} and Remark~\ref{rem:zeta}(i) imply 
$$\Curl(\hat\beta-\mathcal I^\alpha_\rho\zeta)=0\quad\text{in $\mathcal D'(B_\rho;\R^2)$},\qquad\hat\beta-\mathcal I^\alpha_\rho\zeta\in L^2(B_\rho;\R^{2\times 2}).$$
Thus, there exists $v\in H^1(B_\rho;\R^2)$ such that 
\begin{equation*}
\hat\beta-\mathcal I^\alpha_\rho\zeta=\nabla v\quad\text{in $B_\rho$}.
\end{equation*}
Let $(\Phi_n)_n\subset C_c^\infty(\R^2;\R^2)$ be such that $\Phi_n\to v$ in $H^1(B_\rho;\R^2)$ as $n\to\infty$. We consider the linear operator $P_\rho^{1-\alpha}\colon\mathcal S(\R^2;\R^2)\to\mathcal S(\R^2;\R^2)$ given by Proposition~\ref{prop:Psd}. We define
$$\beta_n\coloneqq \zeta+\nabla (P_\rho^{1-\alpha}\Phi_n)\quad\text{in $\R^2$}.$$
Hence, $\beta_n\in \mathcal A(\xi,\rho)$ for all $n\in\N$. By Proposition~\ref{prop:Psd} and~\eqref{eq:ad-psi} we get
\begin{align*}
\Psi(\xi,\alpha,\rho)&\le \liminf_{n\to\infty}\frac{1}{2}\int_{B_\rho}\C\mathcal I^\alpha_\rho\beta_n(x):\mathcal I^\alpha_\rho\beta_n(x)\,\de x\\
&=\lim_{n\to\infty}\frac{1}{2}\int_{B_\rho}\C(\mathcal I^\alpha_\rho\zeta(x)+\nabla\Phi_n(x)):(\mathcal I^\alpha_\rho\zeta(x)+\nabla\Phi_n(x))\,\de x\\
&=\frac{1}{2}\int_{B_\rho}\C(\mathcal I^\alpha_\rho\zeta(x)+\nabla v(x)):(\mathcal I^\alpha_\rho\zeta(x)+\nabla v(x))\,\de x=\frac{1}{2}\int_{B_\rho}\C\hat\beta(x):\hat\beta(x)\,\de x.
\end{align*}
This along with~\eqref{eq:psi-hatpsi-auch} implies the first identity in~\eqref{eq:psi-hatpsi}.
\end{proof}

\begin{remark}\label{important remark}
Since the energy in~\eqref{eq:ad-hatpsi} is quadratic, we immediately get 
$$\Psi(\lambda\xi,\alpha,\rho) =\lambda^2\Psi (\xi,\alpha,\rho) \quad\text{for all $\xi\in\R^2$ and $\lambda \in \R$}. $$
\end{remark} 

\begin{proof}[Proof of Lemma~\ref{lem:lower-est}]
We fix $\xi\in\R^2$, $\alpha\in (0,\frac{1}{2})$, and $\rho\in (0,1)$. The first inequality in~\eqref{eq:lower-est} is trivially achieved by observing that $\eta_\xi\in\mathcal A(\xi,\rho)$, see~\eqref{eq:eta-sys}. 

Let us prove the second inequality. To this end, let $\hat\beta\in\hat{\mathcal A}(\xi,\alpha,\rho)$ be a minimizer of~\eqref{eq:ad-hatpsi}. We define the annular sets
$$
C_k\coloneqq B_{\rho 2^{-k+1}}\setminus B_{\rho 2^{-k}}\quad\text{for all $k\in\N$}.
$$
Notice that $\bigcup_{k\in\N} C_k= B_\rho\setminus\{0\}$. We define 
\begin{equation}\label{eq:over-k}
\overline k\coloneqq\min\left\{k\in\N:\frac{1}{2}\int_{C_k}\C \hat \beta(x):\hat \beta(x)\,\de x\le \frac{1}{2}\int_{C_k}\C\mathcal I^\alpha_\rho\eta_\xi(x):\mathcal I^\alpha_\rho\eta_\xi(x)\,\de x\right\}. 
\end{equation}
Without restriction, we can assume that the set is nonempty and thus $\overline{k}$ exists, as otherwise we would have
\begin{align*}
\frac{1}{2}\int_{B_\rho}\C\mathcal I^\alpha_\rho\eta_\xi(x):\mathcal I^\alpha_\rho\eta_\xi(x)\,\de x&\le \frac{1}{2}\int_{B_\rho}\C \hat\beta(x):\hat\beta(x)\,\de x = \hat\Psi (\xi,\alpha,\rho) =\Psi(\xi,\alpha,\rho),
\end{align*}
which gives~\eqref{eq:lower-est}. By~\eqref{eq:Ia-eta-est2} there exists a constant $C>0$ (independent of $\xi$, $\alpha$, and $\rho$) such that 
\begin{align}
 \int_{C_{\overline k}} |\mathcal I^\alpha_\rho\eta_\xi(x)|^2 \,\de x & 
\le C^2|\xi|^2\int_{C_{\overline k}} \frac{1}{|x|^{2-2\alpha}} \,\de x = 
 2\pi C^2|\xi|^2\int_{\rho2^{-\overline k}}^{\rho2^{-\overline k+1}}\frac{1}{r^{1-2\alpha}}\,\de r \nonumber\\
&=2\pi C^2|\xi|^2\frac{\rho^{2\alpha}}{2^{2\alpha\overline k}}\frac{2^{2\alpha}-1}{2\alpha} \le C|\xi|^2.\label{eq:Ck-est}
\end{align}
Since $\Curl (\hat\beta-\mathcal I^\alpha_\rho\eta_\xi)=0$ in $\mathcal D'(B_\rho;\R^2)$ by Lemma~\ref{lem:curl-Iadxi} and $\hat\beta-\mathcal I^\alpha_\rho\eta_\xi\in L^2(B_\rho;\R^{2\times 2})$ in view of~\eqref{eq:Ia-eta-est2}, there exists a function $v_1\in H^1(B_\rho;\R^2)$ such that 
\begin{align}\label{dont remove label}
\hat\beta-\mathcal I^\alpha_\rho\eta_\xi=\nabla v_1\quad\text{in $B_\rho$}.
\end{align}
Thanks to~\eqref{eq:over-k},~\eqref{eq:Ck-est}, and (C1)--(C3) we can find a constant $C>0$ (independent of $\xi$, $\alpha$, and $\rho$) such that 
$$\int_{C_{\overline k}}|(\nabla v_1(x))^{\sym}|^2\,\de x\le C|\xi|^2.$$
By Poincarè's and Korn's inequalities, there exist $C>0$ (independent of $\xi$, $\alpha$, and $\rho$), $S\in\R^{2\times 2}_{\skw}$, and $m\in \R^2$ such that 
\begin{align}\label{dont remove label2}
\int_{C_{\overline k}}|\nabla v_1(x)-S|^2\,\de x\le C|\xi|^2,\qquad \int_{C_{\overline k}}|v_1(x)-Sx-m|^2\,\de x\le \rho^2 2^{-2\overline k}C|\xi|^2.
\end{align}
Let $\phi\in C_c^\infty(\R^2)$ be such that 
\begin{align}
0\le \phi\le 1\text{ in $\R^2$},\quad|\nabla\phi|\le \frac{C}{\rho 2^{-\overline k}}\text{ in $\R^2$},\quad\phi=1\text{ in $B_{\rho2^{-\overline k}}$},\quad\phi=0\text{ in $\R^2\setminus B_{\rho2^{-\overline k+1}}$}.\label{eq:over-phi2}
\end{align}
We define the function
$$
v_2(x)\coloneqq \phi(x)(v_1(x)-Sx-m)\quad\text{for $x\in B_\rho$}.
$$
By construction $v_2\in H^1_0(B_\rho;\R^2)$. Thanks to~\eqref{eq:over-k}--\eqref{eq:over-phi2}, we can find a constant $C>0$ (independent of $\xi$, $\alpha$, and $\rho$) such that
\begin{align}
&\frac{1}{2}\int_{B_\rho}\C(\mathcal I^\alpha_\rho\eta_\xi(x)+\nabla v_2(x)):(\mathcal I^\alpha_\rho\eta_\xi(x)+\nabla v_2(x))\,\de x\nonumber\\
&=\frac{1}{2}\int_{B_{\rho 2^{-\overline k}}}\C\hat\beta(x):\hat\beta(x)\,\de x+\frac{1}{2}\int_{B_\rho\setminus B_{\rho2^{-\overline k+1}}}\C\mathcal I^\alpha_\rho\eta_\xi(x):\mathcal I^\alpha_\rho\eta_\xi(x)\,\de x\nonumber\\
&\quad+\frac{1}{2}\int_{C_{\overline k}}\C(\mathcal I^\alpha_\rho\eta_\xi(x)+\nabla v_2(x)):(\mathcal I^\alpha_\rho\eta_\xi(x)+\nabla v_2(x))\,\de x\nonumber\\
&\le \frac{1}{2}\int_{B_{\rho }}\C\hat\beta(x):\hat\beta(x)\,\de x +\frac{1}{2}\int_{C_{\overline k}}\C(\mathcal I^\alpha_\rho\eta_\xi(x)+\nabla v_2(x)):(\mathcal I^\alpha_\rho\eta_\xi(x)+\nabla v_2(x))\,\de x\nonumber\\
&\le\Psi (\xi,\alpha,\rho)+ \nu_2\int_{C_{\overline k}}|\mathcal I^\alpha_\rho\eta_\xi(x)|^2\,\de x +\nu_2\int_{C_{\overline k}}|\nabla v_2(x)|^2\,\de x\nonumber\\
&\le\Psi (\xi,\alpha,\rho)+C \nu_2 |\xi|^2+2\nu_2\int_{C_{\overline k}}|\nabla\phi(x)|^2|v_1(x)-Sx-m|^2\,\de x+2\nu_2\int_{C_{\overline k}}|\phi(x)|^2|\nabla v_1(x)-S|^2\,\de x\nonumber\\
&\le\Psi (\xi,\alpha,\rho)+C|\xi|^2\label{eq:lower-est-1}.
\end{align}
Finally, since $\Div\C\eta_\xi=0$ in $\mathcal D'(\R^2;\R^2)$, see~\eqref{eq:eta-sys}, for all $\Phi\in C_c^\infty(B_\rho;\R^2)$ we have
\begin{align*}
\int_{B_\rho}\C \mathcal I^\alpha_\rho\eta_\xi(x):\nabla \Phi(x)\,\de x=\int_{B_{2\rho}}\C\eta_\xi(x):\mathcal I^\alpha_\rho\nabla \Phi(x)\,\de x=\int_{B_{2\rho}}\C\eta_\xi(x):\nabla \mathcal I^\alpha_\rho\Phi(x)\,\de x=0,
\end{align*}
thanks to Fubini's theorem, Proposition~\ref{prop:Psd}, and the fact that $\mathcal I^\alpha_\rho\Phi\in C_c^\infty(B_{2\rho};\R^2)$, see~\eqref{eq:Qsd-p}. Since $v_2\in H^1_0(B_\rho;\R^2)$, by a density argument we conclude that
\begin{align*}
\int_{B_\rho}\C \mathcal I^\alpha_\rho\eta_\xi(x):\nabla v_2(x)\,\de x=0,
\end{align*}
which gives
\begin{equation}\label{eq:lower-est-2}
\frac{1}{2}\int_{B_\rho}\C \mathcal I^\alpha_\rho\eta_\xi(x):\mathcal I^\alpha_\rho\eta_\xi(x)\,\de x\le \frac{1}{2}\int_{B_\rho}\C(\mathcal I^\alpha_\rho\eta_\xi(x)+\nabla v_2(x)):(\mathcal I^\alpha_\rho\eta_\xi(x)+\nabla v_2(x))\,\de x.
\end{equation}
By combining~\eqref{eq:lower-est-1} and~\eqref{eq:lower-est-2} we derive~\eqref{eq:lower-est}.
\end{proof} 
 
\subsection{Proof of Lemma~\ref{prop:Ia-eta-con}}\label{auxi-proof2}

In order to prove Lemma~\ref{prop:Ia-eta-con}, we need the following auxiliary lemma, which states that for all $\omega\in\mathbb S^1$ the Riesz potential $\mathcal I^\alpha\eta_\xi(\omega)$ converges to $\eta_\xi(\omega)$ as $\alpha\to 0$, uniformly with respect to $\xi\in\mathbb S^1$.

\begin{lemma}\label{lemma 4.6}
Let $\omega\in\mathbb S^1$ be fixed. We have
\begin{align}\label{eq:Ia-eta-unif}
\lim_{\alpha\to 0}\sup_{\xi\in\mathbb S^1}|\mathcal I^\alpha\eta_\xi(\omega)-\eta_\xi(\omega)|=0.
\end{align}
 In particular, for all $\xi\in\R^2$ we derive
\begin{align}\label{eq:Ia-eta-point}
\lim_{\alpha\to 0}\mathcal I^\alpha\eta_\xi(\omega)=\eta_\xi(\omega).
\end{align}
\end{lemma}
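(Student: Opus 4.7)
My plan is to reduce the uniform statement~\eqref{eq:Ia-eta-unif} to the pointwise convergence~\eqref{eq:Ia-eta-point}, and then prove the latter by a standard approximate-identity argument adapted to the Riesz kernel. The reduction is based on the fact, recalled right after~\eqref{eq:eta-est}, that $\Gamma_\xi$ is linear in $\xi$; hence by~\eqref{eq:eta-decay} the map $\xi\mapsto \eta_\xi$ is linear, and so is $\xi\mapsto \mathcal I^\alpha\eta_\xi(\omega)-\eta_\xi(\omega)$. A linear map from $\R^2$ to $\R^{2\times 2}$ has operator norm comparable to its values on a basis, so for all $\alpha\in(0,1)$
\[
\sup_{\xi\in\mathbb S^1}\bigl|\mathcal I^\alpha\eta_\xi(\omega)-\eta_\xi(\omega)\bigr|\le 2\max_{i\in\{1,2\}}\bigl|\mathcal I^\alpha\eta_{e_i}(\omega)-\eta_{e_i}(\omega)\bigr|,
\]
where $\{e_1,e_2\}$ is the canonical basis of $\R^2$. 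It thus suffices to establish~\eqref{eq:Ia-eta-point}.

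To prove~\eqref{eq:Ia-eta-point}, I would fix $r\in(0,\tfrac12)$ and split the defining integral as
\[
\mathcal I^\alpha\eta_\xi(\omega) = \frac{1}{\gamma_\alpha}\int_{B_r(\omega)}\frac{\eta_\xi(y)}{|\omega-y|^{2-\alpha}}\,\de y + \frac{1}{\gamma_\alpha}\int_{\R^2\setminus B_r(\omega)}\frac{\eta_\xi(y)}{|\omega-y|^{2-\alpha}}\,\de y,
\]
treating the two pieces as near-field and far-field contributions. In the near-field, since $\eta_\xi\in C^\infty(\R^2\setminus\{0\})$ and $B_r(\omega)\subset\R^2\setminus\{0\}$ (because $|\omega|=1>r$), I would Taylor-expand $\eta_\xi(y)=\eta_\xi(\omega)+(\nabla\eta_\xi(\omega))(y-\omega)+O(|y-\omega|^2)$. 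The constant term contributes $\eta_\xi(\omega)\,\tfrac{2\pi r^\alpha}{\alpha\gamma_\alpha}$, which converges to $\eta_\xi(\omega)$ thanks to the identity $\alpha\gamma_\alpha\to 2\pi$ from Remark~\ref{rem:gamma-a}; the linear term vanishes by reflection symmetry of the kernel $|\omega-y|^{\alpha-2}$ about $\omega$; the quadratic remainder is of order $r^{2+\alpha}/\gamma_\alpha=o(1)$. For the far-field integral, using the pointwise bound $|\eta_\xi(y)|\le K|\xi|/|y|$ from~\eqref{eq:eta-decay}--\eqref{eq:eta-est} and splitting $\R^2\setminus B_r(\omega)$ into $B_{1/2}(0)$, the annular region $\{\tfrac12\le|y|\le 2\}\setminus B_r(\omega)$, and the exterior $\{|y|>2\}$, each piece is bounded by a constant depending only on $r$ (and uniformly in $\alpha\in(0,\tfrac12)$); dividing by $\gamma_\alpha\to\infty$ then kills this contribution.

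The main obstacle I anticipate is not conceptual but careful bookkeeping of the various regions: $\eta_\xi$ lies in no $L^p(\R^2)$, since it has a $1/|y|$ singularity at the origin and the same decay at infinity, so one cannot invoke a standard continuity property of $\mathcal I^\alpha$ in $\alpha$. The argument must instead rely on the explicit radial structure~\eqref{eq:eta-decay} together with the crucial cancellation $\alpha\gamma_\alpha\to 2\pi$, which rescales the logarithmically divergent mass of $|\cdot|^{\alpha-2}$ on $B_r$ precisely to unity and makes the Riesz kernel behave as an approximate identity on the near-field ball. Once these pieces are in place, the proof is essentially textbook.
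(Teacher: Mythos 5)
Your proof is correct and uses essentially the same approach as the paper: split the integral into a near-field ball around $\omega$ (where the crucial scaling $\alpha\gamma_\alpha\to 2\pi$ makes the Riesz kernel act as an approximate identity) and a far-field region (killed by $\gamma_\alpha\to\infty$ using the $K|\xi|/|y|$ bound on $\eta_\xi$). The only cosmetic differences are that you reduce the uniform claim to the basis vectors $e_1,e_2$ at the outset via linearity of $\xi\mapsto\mathcal I^\alpha\eta_\xi(\omega)-\eta_\xi(\omega)$ (a clean simplification; the paper instead carries the uniform modulus of continuity through and derives the pointwise statement afterward), and that you use a Taylor expansion with symmetry cancellation on the near-field where the paper only invokes continuity of $\eta_\xi$ near $\omega$—slightly more than needed, but valid since $\eta_\xi$ is smooth on $\mathbb R^2\setminus\{0\}$.
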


\begin{proof} 
Let $\omega\in\mathbb S^1$ be fixed and let $\varepsilon >0$. Since $\eta_{e_i}$ is continuous around $\omega$ for each $i\in\{1,2\}$, there is $\sigma=\sigma(\varepsilon)\in (0,\frac12)$ such that 
$$|\eta_{e_i}(y)-\eta_{e_i}(\omega)|<\frac{\varepsilon}{2}\quad\text{for all $y\in B_\sigma(\omega)$ and $i\in\{1,2\}$}.$$
For all $\xi\in\mathbb S^1$ we can find $\theta\in [0,2\pi)$ such that 
$$\xi=\cos\theta e_1+\sin\theta e_2.$$
Since the system $\eqref{eq:eta-sys}$ is linear in $\xi$, we derive for each $\xi \in \mathbb S^1$ 
\begin{align}\label{stetigkeit}
|\eta_\xi(y)-\eta_\xi(\omega)|\le |\cos\theta||\eta_{e_1}(y)-\eta_{e_1}(\omega)|+|\sin\theta||\eta_{e_2}(y)-\eta_{e_2}(\omega)|<\varepsilon\quad\text{for all $y\in B_\sigma(\omega)$}.
\end{align}
We fix $\xi\in\mathbb S^1$ and $\alpha\in (0,1)$, and we define
\begin{align*}
\mathcal I^\alpha\eta_\xi (\omega)&=g_1^\alpha(\omega) +g_2^\alpha(\omega) +g_3^\alpha(\omega)\\ 
& \coloneqq\frac{1}{\gamma_{\alpha}}\int_{B_\sigma(\omega)}\frac{\eta_\xi(y)}{|\omega-y|^{2-\alpha}}\,\de y+\frac{1}{\gamma_{\alpha}}\int_{B_\sigma}\frac{\eta_\xi(y)}{|\omega-y|^{2-\alpha}}\,\de y+\frac{1}{\gamma_{\alpha}}\int_{\R^2\setminus (B_\sigma(\omega)\cup B_\sigma)}\frac{\eta_\xi(y)}{|\omega-y|^{2-\alpha}}\,\de y.
\end{align*}
Since
$$|\omega-y|\ge 1-\sigma\quad\text{for all $y\in B_\sigma$},$$
by~\eqref{eq:eta-decay}--\eqref{eq:eta-est} we derive
$$|g_2^\alpha(\omega)|\le \frac{K}{\gamma_{\alpha}(1-\sigma)^{2-\alpha}}\int_{B_\sigma}\frac{1}{|y|}\,\de y=\frac{2\pi \sigma K}{\gamma_{\alpha}(1-\sigma)^{2-\alpha}}.$$
Moreover, thanks to 
$$|\omega-y|\ge\sigma=\sigma|y+\omega-y|\ge \sigma|y|-\sigma|\omega-y|\quad\text{for all $y\in \R^2\setminus B_\sigma(\omega)$},$$
together with~\eqref{eq:eta-decay}--\eqref{eq:eta-est} we get
$$|g_3^\alpha(\omega)|\le \frac{(1+\sigma)^{2-\alpha}K}{\gamma_{\alpha} \sigma^{2-\alpha}}\int_{\R^2\setminus B_\sigma}\frac{1}{|y|^{3-\alpha}}\,\de y=\frac{2\pi(1+\sigma)^{2-\alpha}K}{(1-\alpha)\gamma_{\alpha} \sigma^{3-2\alpha}}.$$
Finally, we have
$$\eta_\xi(\omega)=\frac{1}{\gamma_{\alpha}}\int_{B_\sigma(\omega)}\frac{\eta_\xi(\omega)}{|\omega-y|^{2-\alpha}}\,\de y+\eta_\xi(\omega)\left(1-\frac{2\pi \sigma^\alpha}{\alpha\gamma_{\alpha}}\right).$$
Therefore, again by~\eqref{eq:eta-decay}--\eqref{eq:eta-est} and by~\eqref{stetigkeit} we get 
\begin{align*}
|g_1^\alpha(\omega) -\eta_\xi(\omega)|&\le |\eta_\xi(\omega)|\left|1-\frac{2\pi \sigma^\alpha}{\alpha\gamma_{\alpha}}\right|+\frac{1}{\gamma_{\alpha}}\int_{B_\sigma(\omega)}\frac{|\eta_\xi(y)-\eta_\xi(\omega)|}{|\omega-y|^{2-\alpha}}\,\de y\le K\left|1-\frac{2\pi \sigma^\alpha}{\alpha\gamma_{\alpha}}\right|+\varepsilon\frac{2\pi \sigma^\alpha}{\alpha\gamma_{\alpha}},
\end{align*}
which yields
$$\sup_{\xi\in\mathbb S^1}|\mathcal I^\alpha\eta_\xi(\omega)-\eta_\xi(\omega)|\le K\left|1-\frac{2\pi \sigma^\alpha}{\alpha\gamma_{\alpha}}\right|+\varepsilon\frac{2\pi \sigma^\alpha}{\alpha\gamma_{\alpha}}+\frac{2\pi \sigma K}{\gamma_{\alpha}(1-\sigma)^{2-\alpha}}+\frac{2\pi(1+\sigma)^{2-\alpha}K}{(1-\alpha)\gamma_{\alpha} \sigma^{3-2\alpha}}.$$
Thanks to Remark~\ref{rem:gamma-a}, we derive
$$\limsup_{\alpha\to 0}\sup_{\xi\in\mathbb S^1}|\mathcal I^\alpha\eta_\xi(\omega)-\eta_\xi(\omega)|\le \varepsilon.$$
As $\varepsilon >0$ was arbitrary, we have~\eqref{eq:Ia-eta-unif}. Let us now show~\eqref{eq:Ia-eta-point}. If $\xi=0$, then $\eta_\xi=\mathcal I^\alpha\eta_\xi=0$. On the other hand, if $\xi\in\R^2\setminus\{0\}$, by~\eqref{eq:Ia-eta-unif} and the fact that the system $\eqref{eq:eta-sys}$ is linear in $\xi$, we get
$$|\mathcal I^\alpha\eta_\xi(\omega)-\eta_\xi(\omega)|=|\xi|\left|\mathcal I^\alpha\eta_{\frac{\xi}{|\xi|}}(\omega)-\eta_{\frac{\xi}{|\xi|}}(\omega)\right|\to 0\quad\text{as $\alpha\to 0$}.$$
This proves~\eqref{eq:Ia-eta-point}.
\end{proof}

\begin{proof}[Proof of Lemma~\ref{prop:Ia-eta-con}]
We fix $\xi\in\R^2$ and $\rho\in (0,1)$. By a change of variables, Fubini's theorem, and~\eqref{eq:Ia-eta-scal}, for all $\alpha\in (0,1)$, we have
\begin{align}\label{help1}
\alpha\int_{B_\rho}\C \mathcal I^\alpha\eta_\xi(x):\mathcal I^\alpha\eta_\xi(x)\,\de x&=\alpha\int_{B_\rho}\frac{1}{|x|^{2-2\alpha}}\C \mathcal I^\alpha\eta_\xi\left(\frac{x}{|x|}\right):\mathcal I^\alpha\eta_\xi\left(\frac{x}{|x|}\right)\,\de x \notag\\
&=\alpha\int_0^\rho\frac{1}{r^{1-2\alpha}}\,\de r\int_0^{2\pi}\C \mathcal I^\alpha\eta_\xi(\cos\theta,\sin\theta):\mathcal I^\alpha\eta_\xi(\cos\theta,\sin\theta)\,\de \theta.
\end{align}
By~\eqref{eq:eta-decay},~\eqref{eq:Ia-eta-est},~\eqref{eq:Ia-eta-point}, and the dominated convergence theorem we conclude that 
\begin{align}\label{eq:angular-term}
&\lim_{\alpha\to 0}\int_0^{2\pi}\C \mathcal I^\alpha\eta_\xi(\cos\theta,\sin\theta):\mathcal I^\alpha\eta_\xi(\cos\theta,\sin\theta)\,\de\theta\nonumber\\
&=\int_0^{2\pi}\C \eta_\xi(\cos\theta,\sin\theta):\eta_\xi(\cos\theta,\sin\theta)\,\de \theta=\int_0^{2\pi}\C \Gamma_\xi(\theta):\Gamma_\xi(\theta)\,\de \theta.
\end{align}
Moreover, 
\begin{align}\label{help2}
\lim_{\alpha\to 0}\alpha\int_0^\rho\frac{1}{r^{1-2\alpha}}\de r=\lim_{\alpha\to 0}\frac{\rho^{2\alpha}}{2}=\frac{1}{2}.
\end{align} This along with~\eqref{help1}--\eqref{eq:angular-term} shows~\eqref{eq:Ia-eta-core-point}. Finally, let $(\rho_\alpha)_{\alpha\in (0,1)}\subset (0,1)$ be such that 
$\alpha\log\rho_\alpha\to 0$ as $\alpha\to 0$, i.e., $\rho_\alpha^\alpha \to 1$ as $\alpha\to 0$. We fix $\xi\in\mathbb S^1$ and $\alpha\in (0,1)$. Then, by~\eqref{eq:eta-decay}--\eqref{eq:eta-est},~\eqref{eq:Ia-eta-est},~\eqref{help1}, and~\eqref{help2} we have
\begin{align*}
&\left|\alpha\int_{B_{\rho_\alpha}}\C \mathcal I^\alpha\eta_\xi(x):\mathcal I^\alpha\eta_\xi(x)\,\de x-\frac{1}{2}\int_0^{2\pi}\C \Gamma_\xi(\theta):\Gamma_\xi(\theta)\,\de \theta\right|\\
&=\left|\frac{\rho_\alpha^{2\alpha}}{2}\int_0^{2\pi}\C \mathcal I^\alpha\eta_\xi(\cos\theta,\sin\theta):\mathcal I^\alpha\eta_\xi(\cos\theta,\sin\theta)\,\de \theta-\frac{1}{2}\int_0^{2\pi}\mathbb C\eta_\xi(\cos\theta,\sin\theta):\eta_\xi(\cos\theta,\sin\theta)\,\de \theta\right|\\
&\le \rho_\alpha^{2\alpha}\nu_2\max\{C,K\}\int_0^{2\pi}|\mathcal I^\alpha\eta_\xi(\cos\theta,\sin\theta)-\eta_\xi(\cos\theta,\sin\theta)|\,\de \theta+(1-\rho_\alpha^{2\alpha})\pi \nu_2K^2\\
&\le \rho_\alpha^{2\alpha}\nu_2\max\{C,K\}\int_0^{2\pi}\sup_{\xi\in\mathbb S^1}|\mathcal I^\alpha\eta_\xi(\cos\theta,\sin\theta)-\eta_\xi(\cos\theta,\sin\theta)|\,\de \theta+ (1-\rho_\alpha^{2\alpha})\pi \nu_2K^2. 
\end{align*}
Therefore, by~\eqref{eq:Ia-eta-est},~\eqref{eq:Ia-eta-unif}, and the dominated convergence theorem we obtain~\eqref{eq:Ia-eta-core-unif}.
\end{proof} 


\subsection{Proof of Lemma~\ref{lemma 4.7}}\label{auxi-proof3}

This short subsection is devoted to the proof of Lemma~\ref{lemma 4.7}. 

\begin{proof}[Proof of Lemma~\ref{lemma 4.7}]
Let $\xi\in\R^2$, $\alpha\in \left(0,\frac{1}{2}\right)$, $\varrho\in (1,\infty)$, and $\omega\in \mathbb S^1$ be fixed. By~\eqref{eq:Qsd} and~\eqref{eq:eta-decay}--\eqref{eq:eta-est} we have
\begin{align*}
|\mathcal I^\alpha_\varrho\eta_\xi(\omega)-\mathcal I^\alpha\eta_\xi(\omega)|&=\frac{2-\alpha}{\gamma_{\alpha}}\left|\int_{\R^2}\eta_\xi(y)\int_{|\omega-y|}^\infty \frac{1-\overline w_\varrho(r)}{r^{3-\alpha}}\,\de r\,\de y\right|\\
&\le\frac{(2-\alpha)K|\xi|}{\gamma_{\alpha}}\int_{\R^2}\frac{1}{|y|}\int_{|\omega-y|}^\infty \frac{1-\overline w_\varrho(r)}{r^{3-\alpha}}\,\de r\,\de y.
\end{align*}
Since $1-\overline w_\varrho(r)=0$ for all $r\in [0,\frac{\varrho}{2} ]$, see~\eqref{barwdef}, we derive
 \begin{align}\label{eq:Iad-w1}
|\mathcal I^\alpha_\varrho\eta_\xi(\omega)-\mathcal I^\alpha\eta_\xi(\omega)|&\le\frac{(2-\alpha)K|\xi|}{\gamma_{\alpha}}\int_{\R^2}\frac{1}{|y|}\int_{\max\{|\omega-y|,\frac{\varrho}{2}\}}^\infty \frac{1}{r^{3-\alpha}}\,\de r\,\de y\notag\\
&=\frac{K|\xi|}{\gamma_{\alpha}}\int_{\R^2}\frac{1}{|y|}\frac{1}{\max\{|\omega-y|,\frac{\varrho}{2}\}^{2-\alpha}}\,\de y\notag\\
&=\frac{2^{2-\alpha}K|\xi|}{\gamma_{\alpha}\varrho^{2-\alpha}}\int_{B_\frac{\varrho}{2}(\omega)}\frac{1}{|y|}\,\de y+\frac{K|\xi|}{\gamma_{\alpha}}\int_{\R^2\setminus B_\frac{\varrho}{2}(\omega)}\frac{1}{|y||\omega-y|^{2-\alpha}}\,\de y.
\end{align}
As $\varrho\in (1,\infty)$, we have $B_\frac{\varrho}{2}(\omega)\subset B_{\frac{3\varrho}{2}}$. Therefore,
\begin{align}\label{eq: do nooot remove!}
\frac{2^{2-\alpha}K|\xi|}{\gamma_{\alpha}\varrho^{2-\alpha}}\int_{B_\frac{\varrho}{2}(\omega)}\frac{1}{|y|}\,\de y&\le \frac{2^{2-\alpha}K|\xi|}{\gamma_{\alpha}\varrho^{2-\alpha}}\int_{B_\frac{3\varrho}{2}}\frac{1}{|y|}\,\de y=\frac{2^{2-\alpha}3\pi K|\xi|}{\gamma_{\alpha}\varrho^{1-\alpha}}\le \frac{12\pi K}{\alpha\gamma_{\alpha}}|\xi|\alpha.
\end{align}
Moreover, we have
$$|\omega-y|\ge \frac{\varrho}{2}=\frac{\varrho}{2}|y+\omega-y|\ge \frac{\varrho}{2}|y|-\frac{\varrho}{2}|\omega-y|\quad\text{for all $y\in \R^2\setminus B_\frac{\varrho}{2}(\omega)$},$$
which gives
$$|\omega-y|\ge\frac{\varrho}{2+\varrho}|y|\ge \frac{1}{3}|y|\quad\text{for all $y\in\R^2\setminus B_\frac{\varrho}{2}(\omega)$}.$$
Hence,
\begin{align}\label{eq:Iad-w2}
&\frac{K|\xi|}{\gamma_{\alpha}}\int_{\R^2\setminus B_\frac{\varrho}{2}(\omega)}\frac{1}{|y||\omega-y|^{2-\alpha}}\,\de y\nonumber\\
&=\frac{K|\xi|}{\gamma_{\alpha}}\int_{B_\varrho\setminus B_\frac{\varrho}{2}(\omega)}\frac{1}{|y||\omega-y|^{2-\alpha}}\,\de y+\frac{K|\xi|}{\gamma_{\alpha}}\int_{\R^2\setminus (B_\varrho\cup B_\frac{\varrho}{2}(\omega))}\frac{1}{|y||\omega-y|^{2-\alpha}}\,\de y\nonumber\\
&\le \frac{2^{2-\alpha}K|\xi|}{\gamma_{\alpha}\varrho^{2-\alpha}}\int_{B_\varrho}\frac{1}{|y|}\,\de y+\frac{3^{2-\alpha}K|\xi|}{\gamma_{\alpha}}\int_{\R^2\setminus B_\varrho}\frac{1}{|y|^{3-\alpha}}\,\de y\le \frac{8\pi K}{\alpha\gamma_{\alpha}}|\xi|\alpha+\frac{18\pi K}{(1-\alpha)\alpha\gamma_{\alpha}}|\xi|\alpha.
\end{align}
By~\eqref{eq:Iad-w1}--\eqref{eq:Iad-w2} and Remark~\ref{rem:gamma-a} we derive~\eqref{eq:Iad-Ia-est}. This concludes the proof. 
\end{proof}


\section{The critical regime}\label{s:proofs}

In this section we prove our $\Gamma$-limit result in the critical regime ($N_\alpha\sim \frac{1}{\alpha}$ as $\alpha\to 0$), that is Theorem~\ref{t:critical}. Recall~\eqref{Falphacrit} and~\eqref{eq:F-limit}. Let $(\alpha_j)_j$ and $(\rho_j)_j$ be two sequences satisfying~\eqref{all convergences}, and let $N_j = \frac{1}{2\alpha_j}$, see~\eqref{eq:a-da-j-1}. 
 
The proof of Theorem~\ref{t:critical} is divided into three parts. First, we show a compactness result, which justifies the topology for the $\Gamma$-limit. Then we prove the $\Gamma$-liminf inequality and finally the $\Gamma$-limsup inequality.

\begin{proof}[Proof of Theorem~\ref{t:critical}(i)]
By~\eqref{eq:Fa-bound} we deduce that $\mu_j\in\mathcal X_{\alpha_j}$ and $\beta_j\in\mathcal A_{\alpha_j}(\mu_j)$ for all $j\in\N$. In particular,
\begin{equation*}
\mu_j=\sum_{i=1}^{M_j}\xi_{i,j}\delta_{x_{i,j}},\qquad\Curl\beta_j=\mu_j\quad\text{in $\mathcal D'(\Omega_{\rho_j};\R^2)$},
\end{equation*}
where $\xi_{i,j}\in \mathbb S\setminus\{0\}$, $B_{\rho_{j}}(x_{i,j})\subset\Omega$, and $|x_{i,j}-x_{k,j}|\ge 2\rho_{j}$ for $i\neq k$. We first show that the sequence $(2\alpha_j\mu_j)_j\subset\mathcal M(\Omega;\R^2)$ is uniformly bounded. For all $j\in\N$, we have
\begin{align}
C&\ge 2\alpha_j^2\int_\Omega \C\mathcal I^{\alpha_j}_{\rho_j}\beta_j(x):\mathcal I^{\alpha_j}_{\rho_j}\beta_j(x)\,\de x\nonumber\\
&\ge 2\alpha_j^2\sum_{i=1}^{M_j}\int_{B_{\rho_j}} \C\mathcal I^{\alpha_j}_{\rho_j}\beta_j(x+x_{i,j}):\mathcal I^{\alpha_j}_{\rho_j}\beta_j(x+x_{i,j})\,\de x.\label{eq:com1}
\end{align}
 Since the functions $\beta_j\in L^1(B_{2\rho_j}(x_{i,j});\R^{2\times 2})$ and $\Curl\beta_j=\xi_{i,j}\delta_{x_{i,j}}$ in $\mathcal D'(B_{2\rho_j}(x_{i,j}))$, we derive 
\begin{align}
 2\alpha_j^2\sum_{i=1}^{M_j}\int_{B_{\rho_j}} \C\mathcal I^{\alpha_j}_{\rho_j}\beta_j(x+x_{i,j}):\mathcal I^{\alpha_j}_{\rho_j}\beta_j(x+x_{i,j})\,\de x&\ge 4\alpha_j^2\sum_{i=1}^{M_j}\Psi(\xi_{i,j},\alpha_j,\rho_j)\nonumber\\
&= 4\alpha_j^2\sum_{i=1}^{M_j}|\xi_{i,j}|^2\Psi\left(\frac{\xi_{i,j}}{|\xi_{i,j}|},\alpha_j,\rho_j\right),\label{eq:beta-admissible}
\end{align}
 where $\Psi$ is the function defined in~\eqref{eq:ad-psi} and we used Remark~\ref{important remark}. Let us set 
$$\overline{c}\coloneqq \inf_{\xi\in\mathbb S^1} \psi(\xi),$$
where $\psi$ is the function defined in~\eqref{eq:psi}. We have $\overline{c}>0$ by~\eqref{eq:psi-prop-neu}. By Proposition~\ref{prop:cell-formula}, see~\eqref{eq:unif-conv}, we can find $j_0\in \N$ such that 
\begin{equation}
2\alpha_j\Psi\left(\frac{\xi_{i,j}}{|\xi_{i,j}|},\alpha_j,\rho_j\right)\ge \frac{\overline{c}}{2}\quad\text{for all $j\ge j_0$}.
\end{equation}
Moreover, since $\xi_{i,j}\in\mathbb S\setminus \lbrace 0 \rbrace $, there is $c>0$ such that 
\begin{equation}\label{eq:com4}
|\xi_{i,j}|\ge c\quad\text{for all $i\in\{1,\dots,M_j\}$ and $j\in\N$}.
\end{equation}
By combining~\eqref{eq:com1}--\eqref{eq:com4} we can find $C>0$ such that
\begin{equation}\label{eq:com5}
2\alpha_j|\mu_j|(\Omega)= 2\alpha_j\sum_{i=1}^{M_j}|\xi_{i,j}|\le \frac{1}{c}2\alpha_j\sum_{i=1}^{M_j}|\xi_{i,j}|^2\le C\quad\text{for all $j\in\N$}.
\end{equation}
Hence, there is $\mu\in\mathcal M(\Omega;\R^2)$ such that, up to a not relabeled subsequence,
\begin{equation*}
2\alpha_j\mu_j\xrightharpoonup{*}\mu\quad\text{in $\mathcal M(\Omega;\R^2)$}\text{ as $j\to\infty$}.
\end{equation*}

We now show that there is a sequence of skew-symmetric matrices $(S_j)_j\subset\R^{2\times 2}_{\skw}$ such that the sequence $(2\alpha_j(\mathcal I^{\alpha_j}_{\rho_j}\beta_j-S_j))_j$ is uniformly bounded in $L^2(\Omega;\R^{2\times 2})$. By Lemma~\ref{lem:curl-Iadxi} and Proposition~\ref{prop:inc-Korn}, there exists a constant $C>0$ (independent of $j$) and a sequence $(S_j)_j\subset \R^{2\times 2}_{\skw}$ satisfying
\begin{equation*}
\|\mathcal I^{\alpha_j}_{\rho_j}\beta_j -S_j\|_{L^2(\Omega)}\le C(\|\mathcal I^{\alpha_j}_{\rho_j}\beta_j^{\sym}\|_{L^2(\Omega)}+\|\Curl \mathcal I^{\alpha_j}_{\rho_j}\beta_j\|_{L^1(\Omega)})\quad\text{for all $j\in \N$}.
\end{equation*}
Here, we used $(\mathcal I^{\alpha_j}_{\rho_j}\beta_j)^{\sym} = \mathcal I^{\alpha_j}_{\rho_j}\beta_j^{\sym}$, which follows directly from Definition~\ref{rieszhor}. By (C1)--(C3) we have
\begin{align*}
\|2\alpha_j\mathcal I^{\alpha_j}_{\rho_j}\beta_j^{\sym}\|_{L^2(\Omega)}^2\le \frac{4\alpha_j^2}{\nu_1}\int_\Omega \C\mathcal I^{\alpha_j}_{\rho_j}\beta_j(x):\mathcal I^{\alpha_j}_{\rho_j}\beta_j(x)\,\de x\le C, 
\end{align*}
and by~\eqref{eq:com5} along with Lemma~\ref{lem:curl-Iadxi} and~\eqref{eq:Qsd-est} it holds that
\begin{align*}
\|2\alpha_j\Curl \mathcal I^{\alpha_j}_{\rho_j}\beta_j\|_{L^1(\Omega)}&=2\alpha_j\sum_{i=1}^{M_j}|\xi_{i,j}|\int_{B_{\rho_j}(x_j)} Q^{1-{\alpha_j}}_{\rho_j}(x-x_{i,j})\,\de x\\
&\le\frac{2\alpha_j}{\gamma_{{\alpha_j}}}\sum_{i=1}^{M_j}|\xi_{i,j}|\int_{B_{\rho_j}}\frac{1}{|x|^{2-\alpha_j}}\,\de x=\frac{2\pi\rho_j^{\alpha_j}}{\alpha_j\gamma_{{\alpha_j}}}2\alpha_j\sum_{i=1}^{M_j}|\xi_{i,j}|\le C,
\end{align*}
where in the last step we also used Remark~\ref{rem:gamma-a}. 
Hence, we can find a constant $C>0$ such that
\begin{equation*}
\|2\alpha_j(\mathcal I^{\alpha_j}_{\rho_j}\beta_j -S_j)\|_{L^2(\Omega)}\le C\quad\text{for all $j\in \N$}.
\end{equation*}
Therefore, there is $\beta\in L^2(\Omega;\R^{2\times2})$ such that, up to a not relabeled subsequence,
\begin{equation*}
2\alpha_j(\mathcal I^{\alpha_j}_{\rho_j}\beta_j -S_j)\rightharpoonup\beta\quad\text{in $L^2(\Omega;\R^{2\times 2})$}\text{ as $j\to\infty$}.
\end{equation*}
It remains to prove that $\mu\in H^{-1}(\Omega;\R^2)$ and $\Curl\beta=\mu$ in $\mathcal D'(\Omega;\R^2)$. To this aim, we fix $\Phi\in C_c^\infty(\Omega;\R^2)$, and we observe that
\begin{align*}
\langle \Curl\beta,\Phi\rangle_{\mathcal D'(\Omega)}&=\lim_{j\to\infty}2\alpha_j\langle \Curl (\mathcal I^{\alpha_j}_{\rho_j}\beta_j-S_j),\Phi\rangle_{\mathcal D'(\Omega)}=\lim_{j\to\infty}2\alpha_j\langle \Curl \mathcal I^{\alpha_j}_{\rho_j} \beta_j,\Phi\rangle_{\mathcal D'(\Omega)}.
\end{align*} 
By Lemma~\ref{lem:curl-Iadxi} we have
\begin{align*}
2\alpha_j\langle \Curl \mathcal I^{\alpha_j}_{\rho_j} \beta_j,\Phi\rangle_{\mathcal D'(\Omega)}&=2\alpha_j\sum_{i=1}^{M_j}\xi_{i,j}\cdot\int_{\R^2}\Phi(x)Q^{1-{\alpha_j}}_{\rho_j}(x-x_{i,j})\,\de x\\
&=2\alpha_j\sum_{i=1}^{M_j}\xi_{i,j}\cdot\mathcal I^{\alpha_j}_{\rho_j}\Phi(x_{i,j})=\int_{\R^2}\mathcal I^{\alpha_j}_{\rho_j}\Phi(x)\cdot\de (2\alpha_j\mu_j).
\end{align*}
Since $\rho_j\to 0$ as $j\to\infty$, we derive that $\mathcal I^{\alpha_j}_{\rho_j}\Phi\in C_c^\infty(\Omega;\R^2)$ for $j$ sufficiently large, see~\eqref{eq:Qsd-p}. Moreover, $\mathcal I^{\alpha_j}_{\rho_j}\Phi\to \Phi$ in $C(\overline\Omega;\R^2)$ as $j\to\infty$, see Corollary~\ref{coro:Iad-conv}. Therefore,
\begin{align*}
\langle \Curl\beta,\Phi\rangle_{\mathcal D'(\Omega)}&=\lim_{j\to\infty}\int_\Omega\mathcal I^{\alpha_j}_{\rho_j}\Phi(x)\cdot\de(2\alpha_j\mu_j)=\int_\Omega\Phi(x)\cdot\de\mu,
\end{align*}
which gives $\Curl\beta=\mu$ in $\mathcal D'(\Omega;\R^2)$. Finally, by~\eqref{curlicurl} notice that
$${\langle \mu,\Phi\rangle_{\mathcal D'(\Omega)}=\langle \Curl\beta,\Phi\rangle_{\mathcal D'(\Omega)}=-\int_\Omega\beta(x)J:\nabla\Phi(x)\,\de x\quad\text{for all $\Phi\in C_c^\infty(\Omega;\R^2)$}.}$$
In particular, since $\beta\in L^2(\Omega;\R^{2\times 2})$ we find that $\mu\in H^{-1}(\Omega;\R^2)$. This concludes the proof. 
\end{proof}

\begin{remark}\label{rem:classic-Riesz2}
In the proof of the compactness argument, it is important to consider the Riesz potential with finite horizon, for this allows us to localize the dislocation energy. More precisely, in~\eqref{eq:beta-admissible} we used that $\beta_j$ is admissible for the asymptotic energy $\Psi(\xi_{i,j},\alpha_j,\rho_j)$, since $\Curl\beta_j=\xi_{i,j}\delta_{x_{i,j}}$ in $B_{2\rho_j}(x_{i,j})$. On the contrary, when we are dealing with the classical Riesz potential, a field $\beta$ should satisfy $\Curl\beta=\xi_{i,j}\delta_{x_{i,j}}$ (in the distributional sense) on the entire $\R^2$, see~\eqref{New equa}. This, however, is not compatible with the fact that $\Curl\beta_j=\sum_{i=1}^{M_j}\xi_{i,j}\delta_{x_{i,j}}$ in $\R^2$.
\end{remark}

We now prove the $\Gamma$-liminf inequality.

\begin{proof}[Proof of Theorem~\ref{t:critical}(ii)]
Without loss of generality, we may assume that 
\begin{align}\label{eq:Mj-bound0}
\exists\lim_{j\to\infty}\mathcal F_{\alpha_j}(\mu_j,\beta_j)<\infty, \qquad C\coloneqq\sup_j\mathcal F_{\alpha_j}(\mu_j,\beta_j)<\infty.
\end{align}
In particular, this yields $\mu_j=\sum_{i=1}^{M_j}\xi_{i,j}\delta_{x_{i,j}}\in\mathcal X_{\alpha_j}$ and $\beta_j\in\mathcal A_{\alpha_j}(\mu_j)$. By arguing as in the proof of Theorem~\ref{t:critical}(i), see~\eqref{eq:com4}--\eqref{eq:com5}, there is $C>0$ such that
\begin{align}\label{eq:Mj-bound}
\alpha_jM_j\le C\alpha_j\sum_{i=1}^{M_j}|\xi_{i,j}|\le C\alpha_j\sum_{i=1}^{M_j}|\xi_{i,j}|^2\le C\quad\text{for all $j\in\N$},
\end{align}
and $\Curl\beta=\mu$ in $\mathcal D'(\Omega;\R^2)$, as well as $\mu\in H^{-1}(\Omega;\R^2)$. We define
$$\Omega_j\coloneqq \bigcup_{i=1}^{M_j}B_{\rho_j}(x_{i,j})\subset\Omega\quad\text{for all $j\in\N$}.$$
By~\eqref{all convergences},~\eqref{eq:a-da-j-1}, and~\eqref{eq:Mj-bound} we derive that
$$|\Omega_j|=\pi M_j\rho_j^2\le C\frac{\rho_j^2}{\alpha_j}\to 0\quad\text{as $j\to\infty$}.$$
Therefore, we have
$$2\alpha_j(\mathcal I^{\alpha_j}_{\rho_j}\beta_j-S_j)\chi_{\Omega\setminus\Omega_j}\rightharpoonup \beta \quad\text{in $L^2(\Omega;\R^{2\times 2})$ as $j\to\infty$},$$
which yields
\begin{align}\label{eq:liminf-1}
\liminf_{j\to\infty}2\alpha_j^2\int_{\Omega\setminus\Omega_j}\C\mathcal I^{\alpha_j}_{\rho_j}\beta_j(x):\mathcal I^{\alpha_j}_{\rho_j}\beta_j(x)\,\de x\ge \frac{1}{2}\int_\Omega\C\beta(x):\beta(x)\,\de x.
\end{align}
Moreover, by~\eqref{eq:ad-psi},~\eqref{eq:varphi},~\eqref{eq:Mj-bound0}--\eqref{eq:Mj-bound} as well as~\eqref{eq:psi-prop} and Remark~\ref{important remark} we get 
\begin{align*}
2\alpha_j^2\int_{\Omega_j}\C\mathcal I^{\alpha_j}_{\rho_j}\beta_j(x):\mathcal I^{\alpha_j}_{\rho_j}\beta_j(x)\,\de x &=2\alpha_j^2\sum_{i=1}^{M_j}\int_{B_{\rho_j}(x_{i,j})} \hspace{-0.4cm}\C\mathcal I^{\alpha_j}_{\rho_j}\beta_j(x):\mathcal I^{\alpha_j}_{\rho_j}\beta_j(x)\,\de x \ge 4\alpha_j^2\sum_{i=1}^{M_j}\Psi(\xi_{i,j},\alpha_j,\rho_j)\\
&\ge 2\alpha_j\sum_{i=1}^{M_j}\psi(\xi_{i,j})-2\alpha_j\sum_{i=1}^{M_j}|\xi_{i,j}|^2\sup_{\xi\in\mathbb S^1}|2\alpha_j\Psi(\xi,\alpha_j,\rho_j)-\psi(\xi)|\\
&\ge 2\alpha_j\sum_{i=1}^{M_j}\varphi(\xi_{i,j})-C\sup_{\xi\in\mathbb S^1}|2\alpha_j\Psi(\xi,\alpha_j,\rho_j)-\psi(\xi)|\\
&=\int_\Omega \varphi(x) \, \de(2\alpha_j\mu_j) -C\sup_{\xi\in\mathbb S^1}|2\alpha_j\Psi(\xi,\alpha_j,\rho_j)-\psi(\xi)|.
\end{align*}
Since $\varphi$ is positively $1$-homogeneous and convex, see~\eqref{eq:varphi} and Remark~\ref{a new remark}(i), by Reshetnyak's lower semicontinuity theorem and Proposition~\ref{prop:cell-formula} we derive
\begin{align*}
\liminf_{j\to\infty}2\alpha_j^2\int_{\Omega_j}\C\mathcal I^{\alpha_j}_{\rho_j}\beta_j(x):\mathcal I^{\alpha_j}_{\rho_j}\beta_j(x)\,\de x\ge \liminf_{j\to\infty} \int_\Omega\varphi\left(\frac{\de(2\alpha_j\mu_j)}{\de|2\alpha_j\mu_j|}\right) \de|2\alpha_j\mu_j| \ge \int_\Omega\varphi\left(\frac{\de\mu}{\de|\mu|}\right)\,\de|\mu|.
\end{align*}
This combined with~\eqref{eq:liminf-1} concludes the proof of~\eqref{eq:gamma-liminf}.
\end{proof}

\begin{remark}\label{remark:forsub}
Due to relation~\eqref{eq:a-da-j-1}, the prefactor $2\alpha_j$ in~\eqref{eq:con-cri-1}--\eqref{eq:con-cri-2} can be replaced by both $1/N_j$ or $\sqrt{2\alpha_j}/\sqrt{N_j}$ without any change. In the same way, the prefactor $(2\alpha_j)^2$ in~\eqref{Falphacrit} can be replaced by $2\alpha_j / N_j$. By inspection of the previous proofs, we see that the arguments immediately imply Theorem~\ref{t:subcritical}(i),(ii), except for the property $\Curl\beta=0$ in $\mathcal D'(\Omega;\R^2)$.
\end{remark} 

To prove the $\Gamma$-limsup inequality, we need to approximate diffusive measures $\mu$ by  suitable sums of Dirac deltas. This is done in the following lemma, whose proof can be found in~\cite[Lemma~14]{GLP}.

\begin{lemma}\label{lem:mu-appr}
Let $\Omega$ be an open, bounded, simply connected set with Lipschitz boundary. Let $(N_j)_j\subset\N$ be such that $N_j\to \infty$ as $j\to\infty$. Let $\xi\coloneqq \sum_{i=1}^M\lambda_k\xi_k\in\R^2$ be such that $\lambda_k\ge 0$ and $\xi_k\in\mathbb S$. Let us define
\begin{equation}\label{eq:rj}
\Lambda\coloneqq \sum_{k=1}^M\lambda_k,\qquad r_j\coloneqq \frac{1}{2\sqrt{\Lambda N_j}}\quad\text{for all $j$}.
\end{equation}
There exists a sequence of measures $(\mu_j)_j\subset\mathcal M(\R^2;\R^2)$ such that 
\begin{align}\label{eq:muj-kj}
\mu_j= \sum_{k=1}^M\xi_k\mu_{k,j}\quad\text{for all $j$},\qquad \mu_{k,j}\coloneqq \sum_{i=1}^{M_{k,j}}\delta_{x_{i,k,j}}\quad\text{for all $k,j$},
\end{align}
with
$$
B_{r_j}(x_{i,k,j})\subset\Omega\quad\text{for all $i,k,j$},\qquad |x_{i,k,j}-x_{i',k',j}|\ge 2r_j\quad\text{if $(i',k')\neq (i,k)$, for all $j$},
$$
and satisfying
\begin{align}
&\frac{1}{N_j}|\mu_{k,j}|\xrightharpoonup{*} \lambda_k\chi_\Omega\,\de x\quad\text{in $\mathcal M(\R^2)$ as $j\to\infty$ for all $k=1,\ldots,M$},\label{eq:mukj}\\
&\frac{1}{N_j}\mu_j \xrightharpoonup{*}\xi\chi_\Omega\,\de x\quad\text{in $\mathcal M(\R^2;\R^2)$ as $j\to\infty$}.\label{eq:muj}
\end{align}
\end{lemma}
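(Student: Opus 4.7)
\emph{Strategy.} I would approximate the target absolutely continuous measures $\lambda_k\chi_\Omega\,\de x$ by empirical measures supported on a regular lattice of scale $r_j$. Because $(2r_j)^2 = 1/(\Lambda N_j)$, a tiling of $\R^2$ by closed squares of side $2r_j$ places approximately $\Lambda N_j$ squares per unit area and hence roughly $\Lambda N_j|\Omega|$ squares inside $\Omega$. Since the total number of Dirac masses I want to produce, $\sum_k M_{k,j} \approx \Lambda N_j|\Omega|$, matches the number of such squares, I can aim to place exactly one atom per square (of at most one type $k$), located at its centre. This choice automatically yields the separation $|x_{i,k,j}-x_{i',k',j}|\ge 2r_j$ for $(i,k)\ne(i',k')$ and, after discarding the squares within distance $r_j$ of $\partial\Omega$, the containment $B_{r_j}(x_{i,k,j})\subset\Omega$.

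\emph{Spreading the labels.} To obtain weak-$*$ convergence to a \emph{uniform} density $\lambda_k\chi_\Omega\,\de x$, the squares labelled with index $k$ must be well distributed across $\Omega$. I introduce a mesoscale $R_j$ with $r_j\ll R_j\ll 1$ (for instance $R_j = \sqrt{r_j}$) and subdivide $\Omega$ into disjoint super-squares $Q_{\ell,j}$ of side $R_j$, each containing $\lfloor R_j^2 \Lambda N_j\rfloor$ micro-squares of side $2r_j$. Inside each $Q_{\ell,j}$, I reserve $\lfloor \lambda_k R_j^2 N_j\rfloor$ micro-squares to carry a Dirac mass of type $k$; since $\sum_{k=1}^M \lambda_k R_j^2 N_j = \Lambda R_j^2 N_j$, the different labels can be assigned pairwise disjointly within each super-square. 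The measure $\mu_{k,j}$ is then defined by placing a Dirac delta at the centre of every micro-square carrying label $k$, and $\mu_j$ is assembled via~\eqref{eq:muj-kj}.

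\emph{Weak-$*$ convergence.} For $\phi\in C_c(\R^2)$, uniform continuity of $\phi$ allows me to replace $\phi$ by its value $\phi(z_{\ell,j})$ at the centre of $Q_{\ell,j}$ on each super-square, with an error of order $\omega_\phi(R_j)\to 0$. Counting $\lfloor\lambda_k R_j^2 N_j\rfloor$ atoms per super-square, each carrying weight $1/N_j$, produces
\[
\frac{1}{N_j}\langle\mu_{k,j},\phi\rangle = \sum_{\ell}\lambda_k R_j^2\,\phi(z_{\ell,j}) + o(1),
\]
which is a Riemann sum for $\lambda_k\int_\Omega\phi\,\de x$ and hence~\eqref{eq:mukj} holds. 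Property~\eqref{eq:muj} then follows by linearity, since $\frac{1}{N_j}\mu_j = \sum_{k=1}^M \xi_k\,\frac{1}{N_j}\mu_{k,j}\xrightharpoonup{*}\sum_{k=1}^M\lambda_k\xi_k\chi_\Omega\,\de x = \xi\chi_\Omega\,\de x$.

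\emph{Main obstacle.} The only genuine difficulty is the combinatorial bookkeeping: the numbers $\lambda_k R_j^2 N_j$ and $R_j^2\Lambda N_j$ are not integers, and the micro-squares meeting $\partial\Omega$ or lying in its $r_j$-neighbourhood must be excluded to guarantee $B_{r_j}(x_{i,k,j})\subset\Omega$. Both corrections affect only a set of area $O(R_j + r_j)$ inside $\Omega$, and the rounding loses at most $O(|\Omega|/R_j^2)$ atoms in total; dividing by $N_j$ and recalling $r_j, R_j\to 0$, these errors contribute $o(1)$ to $\frac{1}{N_j}\mu_{k,j}$ in the total variation norm and thus do not affect the weak-$*$ limit.
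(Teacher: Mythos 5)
Your multi-scale lattice construction — micro-squares of side $2r_j$ to enforce the separation and containment constraints, meso-squares of side $R_j$ with $r_j\ll R_j\ll 1$ to distribute the $M$ labels uniformly, and a Riemann-sum argument for the weak-$*$ convergence — is correct and is essentially the strategy used in \cite[Lemma~14]{GLP}, to which the paper defers for the proof. You also directly obtain the convergence against $C_c(\R^2)$ test functions, which the paper notes in Remark~\ref{rem:mu-appr}(ii) requires a small extra observation (namely $|\partial\Omega|=0$) beyond the literal statement in \cite[Lemma~14]{GLP}.
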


\begin{remark}\label{rem:mu-appr}
(i) For the sequel, it is convenient to use the following notation. Denoting the centers $(x_{i,k,j})_{i,k}$ by $(x_{i,j})_i$ with associated $(\xi_{i,j})_i \subset \lbrace \xi_1 ,\ldots, \xi_M \rbrace$ (see~\eqref{eq:muj-kj}), we can write 
$$
\mu_j=\sum_{k=1}^M\xi_k\left(\sum_{i=1}^{M_{k,j}}\delta_{x_{i,k,j}}\right)=\sum_{i=1}^{M_j}\xi_{i,j}\delta_{x_{i,j}},\qquad \text{where } \ M_j\coloneqq\sum_{k=1}^M M_{k,j}.
$$ 

(ii) The statement of~\cite[Lemma~14]{GLP} only says that
$$
\frac{1}{N_j}|\mu_{k,j}|\xrightharpoonup{*} \lambda_k\,\de x\quad\text{in $\mathcal M(\Omega)$ as $j\to\infty$ for all $k$},\qquad \frac{1}{N_j}\mu_j \xrightharpoonup{*}\xi\,\de x\quad\text{in $\mathcal M(\Omega;\R^2)$ as $j\to\infty$}.
$$
However, a careful inspection of the construction in its proof together with $|\partial\Omega|=0$ yield that actually~\eqref{eq:mukj}--\eqref{eq:muj} hold. 

(iii) Note that $\frac{\rho_j}{r_j}\to 0$ as $j\to\infty$ by~\eqref{all convergences}. Therefore, $\mu_j \in \mathcal X_{\alpha_j}$ for $j$ sufficiently large.

(iv) For later purposes, we note that there exists a constant $C>0$ (independent of $i$ and $j$) with
\begin{align}\label{eq inremark}
|\xi_{i,j}|\le C\quad\text{for all $i,j$},\qquad\frac{M_j}{N_j}\le C\quad\text{for all $j$}.
\end{align}
The latter follows from the boundedness of $\mu_j$ (see~\eqref{eq:muj}) along with the fact that $|\xi_{i,j}| \ge c>0$ for all $i,j$, see~\eqref{eq:com4}.
\end{remark}

We can finally prove the $\Gamma$-limsup inequality.

\begin{proof}[Proof of Theorem~\ref{t:critical}(iii)]
The proof is divided into three steps. We first assume that $\mu$ has the form $\mu = \xi\chi_A\,\de x$ for some $A \subset \Omega$ (Step 1). Then, we consider piecewise constant $\mu$ with respect to a partition of $\Omega$ (Step 2), and finally pass to the general case (Step 3). Step 1 will be further subdivided into several steps. 

{\bf Step~1.} Let $\mu\coloneqq \xi\chi_A\,\de x$, where $\xi\in\R^2$ and $A\subset\Omega$ is an open, bounded, simply connected set with Lipschitz boundary. Let $\beta\in L^2(\Omega;\R^{2\times 2})$ be such that $\Curl\beta=\mu$ in $\mathcal D'(\Omega;\R^2)$. Recalling~\eqref{eq:varphi}, we write $\xi=\sum_{k=1}^M\lambda_k\xi_k$, where $\xi_k\in\mathbb S$, $\lambda_k\ge 0$, such that 
\begin{align}\label{phiuse}
\varphi(\xi)=\sum_{k=1}^M\lambda_k\psi(\xi_k).
\end{align}
{\bf Step~1.1:} \emph{Regularization of $\beta$.} First, we regularize $\beta$ and extend it outside $\Omega$ in such a way that the condition on the curl is preserved. To this aim we fix $R>0$ such that $\Omega\subset\subset B_R$, and we consider the function $\tilde w\in H^1_0(B_R;\R^2)$ which solves
\begin{equation*}
\begin{cases}
\Delta \tilde w=\mu &\text{in $B_R$},\\
\tilde w=0 &\text{on $\partial B_R$}.
\end{cases}
\end{equation*}
Since $\mu = \xi\chi_A\in L^\infty(B_R;\R^2)$, we derive that $\tilde w\in W^{2,p}(B_R;\R^2)$ for all $p\in [1,\infty)$ by elliptic regularity. In particular, $\tilde w\in C^1(\overline B_R;\R^2)$ by Sobolev embedding theorems. We define $\tilde\beta\coloneqq \nabla \tilde wJ^T\in C(\overline B_R;\R^{2\times 2})$, and by~\eqref{curlicurl} we have
$$
\Curl(\beta-\tilde\beta)=\mu-\Delta \tilde w=\mu-\mu=0\quad\text{in $\mathcal D'(\Omega;\R^2)$}.
$$
Hence, there exists a function $v\in H^1(\Omega;\R^2)$ such that 
$$
\beta-\tilde\beta=\nabla v\quad\text{in $\Omega$}.
$$
Let $(\Phi_j)_j\subset C^\infty_c(\R^2;\R^2)$ be such that 
\begin{align}\label{tildebetaj0}
{\nabla\Phi_j\to\nabla v\quad\text{in $L^2(\Omega;\R^{2\times 2})$ as $j\to\infty$}.}
\end{align}
We consider the linear operator $P_{\rho_j}^{1-{\alpha_j}}\colon\mathcal S(\R^2;\R^2)\to \mathcal S(\R^2;\R^2)$ given by Proposition~\ref{prop:Psd}, and we define the regularized function 
\begin{align}\label{tildebetaj}
 \beta^{\rm reg}_j\coloneqq \tilde \beta+\nabla (P_{\rho_j}^{1-{\alpha_j}}\Phi_j)\quad\text{in $B_R$ for all $j\in\N$}.
\end{align}
By construction $\beta^{\rm reg}_j\in C({\overline B_R};\R^{2\times 2})$ and $\Curl\beta^{\rm reg}_j=\mu$ in $\mathcal D'(B_R;\R^2)$. 

{\bf Step~1.2:} \emph{Construction of $(\mu_j,\beta_j)$.} 
We now construct a sequence $(\mu_j,\beta_j)_j\subset\mathcal M(\R^2;\R^2)\times L^1(B_R;\R^{2\times 2})$ which satisfies $(\mu_j,\beta_j)\in\mathcal X_{\alpha_j}\times \mathcal A_{\alpha_j}(\mu_j)$ for all $j\in\N$. Let $(\mu_j)_j\subset\mathcal M(\R^2;\R^2)$ be the sequence of measures given by Lemma~\ref{lem:mu-appr} with $N_j = \frac{1}{2\alpha_j}$ for $j\in\N$ (cf.~\eqref{eq:a-da-j-1}). Using the notation of Remark~\ref{rem:mu-appr}(i), we have 
\begin{equation}\label{eq:muj-mukj}
\mu_j= \sum_{k=1}^M\xi_k\mu_{k,j}=\sum_{i=1}^{M_j}\xi_{i,j}\delta_{x_{i,j}}\quad\text{for all $j$}, 
\end{equation}
with
\begin{align}\label{in A}
B_{r_j}(x_{i,j})\subset A\quad\text{for all $i,j$},\qquad |x_{i,j}-x_{i',j}|\ge 2r_j\quad\text{if $i\neq i'$, for all $j$},
\end{align}
and, as $j\to\infty$,
\begin{align}\label{mumumu}
2\alpha_j|\mu_{k,j}|\xrightharpoonup{*} \lambda_k\chi_A\,\de x\quad\text{in $\mathcal M(\R^2)$ for all $k$},\qquad 2\alpha_j\mu_j \xrightharpoonup{*}\mu = \xi\chi_A\,\de x \quad\text{in $\mathcal M(\R^2;\R^2)$ }.
\end{align}
Next, we will modify ${\beta}^{\rm reg}_j$ to obtain a function whose curl is $\mu_j$. To this end, let $\phi\in C_c^\infty (B_1)$ be such that $0\le \phi\le 1$ in $\R^2$ and $\phi=1$ in $B_{\frac{1}{2}}$. Recalling the function $\eta_\xi$ for $\xi \in \R^2$ given in~\eqref{eq:eta-sys} and using the fact that $\rho_j < r_j$ for $j$ large enough (see Remark~\ref{rem:mu-appr}(iii)), we define
\begin{equation}\label{eq:zetaij-cr}
\zeta_{i,j}(x)\coloneqq \eta_{\xi_{i,j}}(x-x_{i,j})\phi\left(\frac{x-x_{i,j}}{r_j-\rho_j}\right)\quad\text{for all $x\in\R^2\setminus\{x_{i,j}\}$, $i\in\{1,\dots,M_j\}$, and $j\in\N$},
\end{equation}
and
\begin{equation}\label{eq:zetaj-cr}
\zeta_j(x)\coloneqq\sum_{i=1}^{M_j}\zeta_{i,j}(x)\quad\text{for all $x\in\R^2\setminus\{x_{1,j},\dots,x_{M_j,j}\}$ and $j\in\N$}.
\end{equation}
We have that $\zeta_j\in L^p_{\loc}(\R^2;\R^{2\times 2})$ for all $p\in [1,2)$, see~\eqref{eq:eta-decay}--\eqref{eq:eta-est}, and $(\supp\zeta_j)_{\rho_j}\subset A$ by~\eqref{in A}. Moreover, 
\begin{equation}\label{eq:nuj-cr}
\Curl\zeta_j=\mu_j+\nu_j\coloneqq\sum_{i=1}^{M_j}\xi_{i,j}\delta_{x_{i,j}}+\sum_{i=1}^{M_j}\eta_{\xi_{i,j}}(\,\cdot\,-x_{i,j})J\nabla\phi\left(\frac{\,\cdot\,-x_{i,j}}{r_j-\rho_j}\right)\frac{1}{r_j-\rho_j}\quad\text{in $\mathcal D'(\R^2;\R^2)$}.
\end{equation}
It turns out that the curl of the function $\frac{\beta^{\rm reg}_j}{2\alpha_j}+\zeta_j$ is $\mu_j$, up to an asymptotically vanishing term. As a final step of the construction, we remove this remainder term: let $w^{\rm rem}_j\in H^1_0(B_R;\R^2)$ be the solution to 
\begin{align}\label{hatw}
\begin{cases}
\Delta w^{\rm rem}_j=-\frac{\mu}{2\alpha_j}-\nu_j &\text{in $B_R$},\\
 w^{\rm rem}_j=0 &\text{on $\partial B_R$},
\end{cases}
\end{align}
and define the function $\beta^{\rm rem}_j\coloneqq \nabla w^{\rm rem}_jJ^T\in L^2(B_R;\R^{2\times 2})$. We set
\begin{equation}\label{betabtetabta}
\beta_j\coloneqq \frac{\beta^{\rm reg}_j}{2\alpha_j}+\zeta_j+\beta^{\rm rem}_j\quad\text{in $B_R$}.
\end{equation}
Recalling that $\Curl\beta^{\rm reg}_j=\mu$ in $\mathcal D'(B_R;\R^2)$, by construction (use~\eqref{curlicurl}) we get 
\begin{align}\label{curlrechnung}
\Curl\beta_j=\frac{\mu}{2\alpha_j}+\mu_j+\nu_j-\frac{\mu}{2\alpha_j}-\nu_j=\mu_j\quad\text{in $\mathcal D'(B_R;\R^2)$},
\end{align}
which yields $\beta_j\in\mathcal A_{\alpha_j}(\mu_j)\quad\text{for all $j\in\N$}.$ 

{\bf Step~1.3:} \emph{Convergence of recovery sequence.} 
We now show that $(\mu_j,\beta_j)_j$ satisfies~\eqref{eq:con-cri-1}--\eqref{eq:con-cri-2}. To this aim, in view of~\eqref{mumumu}, we just need to prove that
\begin{equation}\label{eq:aj-bj-con}
2\alpha_j\mathcal I^{\alpha_j}_{\rho_j}\beta_j \rightharpoonup \beta\quad\text{in $L^2(\Omega;\R^{2\times 2})$ as $j\to\infty$}.
\end{equation}
Notice that $\supp(\mathcal I^{\alpha_j}_{\rho_j}\zeta_{i,j})\subset B_{r_j}(x_{i,j})$ by~\eqref{eq:Qsd-p} and~\eqref{eq:zetaij-cr}. By Remark~\ref{rem:gamma-a}, Proposition~\ref{prop:Riesz-comp}, and~\eqref{eq:eta-decay}--\eqref{eq:eta-est} we have
\begin{equation}\label{eq:zetaij-est}
|\mathcal I^{\alpha_j}_{\rho_j}\zeta_{i,j}(x)|\le \frac{K|\xi_{i,j}|}{\gamma_{{\alpha_j}}}\int_{\R^2}\frac{1}{|y-x_{i,j}|}\frac{1}{|x-y|^{2-{\alpha_j}}}\,\de y=\frac{2\pi K|\xi_{i,j}|}{\gamma_{1+{\alpha_j}}}\frac{1}{|x-x_{i,j}|^{1-{\alpha_j}}}.
\end{equation}
Thus, by~\eqref{eq inremark} in Remark~\ref{rem:mu-appr} and~\eqref{eq:a-da-j-1} we can find a constant $C>0$ (independent of $j$) such that, for all $j\in\N$,
\begin{align}\label{eq:zetaij-est-new}
\int_\Omega|2\alpha_j\mathcal I^{\alpha_j}_{\rho_j}\zeta_j(x)|^2\,\de x=4\alpha_j^2\sum_{i=1}^{M_j}\int_{B_{r_j}(x_{i,j})}|\mathcal I^{\alpha_j}_{\rho_j}\zeta_{i,j}(x)|^2\,\de x\le Cr_j^{2\alpha_j}\alpha_jM_j \le C\alpha_jN_j \le C.
\end{align}
Moreover, we have
\begin{align*}
\int_\Omega|2\alpha_j\mathcal I^{\alpha_j}_{\rho_j}\zeta_j(x)|\,\de x=2\alpha_j\sum_{i=1}^{M_j}\int_{B_{r_j}(x_{i,j})}|\mathcal I^{\alpha_j}_{\rho_j}\zeta_{i,j}(x)|\,\de x\le C\alpha_j M_j r_j^{1+{\alpha_j}}\le C r_j\quad\text{for all $j\in\N$}.
\end{align*}
Hence, we derive that 
\begin{align}\label{eq:ls-con-1}
2\alpha_j\mathcal I^{\alpha_j}_{\rho_j}\zeta_j \rightharpoonup 0\quad\text{in $L^2(\Omega;\R^{2\times 2})$ as $j\to\infty$}.
\end{align}
As $\tilde\beta\in C(\overline B_R;\R^{2\times 2})$, by~\eqref{tildebetaj0}--\eqref{tildebetaj}, Proposition~\ref{prop:Psd}, and Corollary~\ref{coro:Iad-conv} we infer that 
\begin{align}\label{eq:ls-con-2}
\mathcal I^{\alpha_j}_{\rho_j}\beta^{\rm reg}_j=\mathcal I^{\alpha_j}_{\rho_j}\tilde \beta+\nabla\Phi_j\to \tilde\beta+\nabla v=\beta\quad\text{in $L^2(\Omega;\R^{2\times 2})$ as $j\to\infty$}.
\end{align}
Recalling the definition of $\nu_j$ in~\eqref{eq:nuj-cr}, we claim that 
\begin{equation}\label{eq:ajnuj-lim}
2\alpha_j\nu_j+\mu\xrightharpoonup{*}0\quad\text{in $L^\infty(B_R;\R^2)$ as $j\to\infty$}.
\end{equation}
First, since $\zeta_{i,j}$ have disjoint supports, there exists a constant $C>0$ (independent of $j$) such that
\begin{align}\label{nununu}
\|2\alpha_j\nu_j\|_{L^\infty(B_R)}\le C\frac{2\alpha_j}{(r_j-\rho_j)^2}\quad\text{for all $j\in\N$},
\end{align}
where we used~\eqref{eq:eta-decay}--\eqref{eq:eta-est}. Moreover, by Remark~\ref{rem:mu-appr}(iii),~\eqref{eq:a-da-j-1}, and~\eqref{eq:rj} we have
$${
\lim_{j\to\infty}\frac{2\alpha_j}{(r_j-\rho_j)^2}=4\Lambda. }
$$
 Hence, $(2\alpha_j\nu_j+\mu)_j\subset L^\infty(B_R;\R^2)$ is uniformly bounded. Therefore, to prove~\eqref{eq:ajnuj-lim}, it is enough to observe that for all $\Phi\in C_c^\infty (B_R;\R^2)$ we have
$$\langle 2\alpha_j\nu_j+\mu,\Phi\rangle_{\mathcal D'(B_R)}\to 0\quad\text{as $j\to\infty$}.$$
Since $2\alpha_j\mu_j\xrightharpoonup{*}\mu$ in $\mathcal M(\R^2;\R^2)$, we just need to prove that 
$2\alpha_j\langle \nu_j+\mu_j,\Phi\rangle_{\mathcal D'(B_R)}\to 0$ as $j\to\infty$ for all $\Phi\in C_c^\infty (B_R;\R^2)$. We recall $ \nu_j+\mu_j = \Curl \zeta_j$ and, as $j \to \infty$, using~\eqref{curlicurl} and~\eqref{eq:eta-decay}--\eqref{eq:eta-est}, we estimate 
\begin{align*}
2\alpha_j\left|\langle \Curl \zeta_j,\Phi\rangle_{\mathcal D'(B_R)}\right|&=2\alpha_j\left|\int_{B_R}\zeta_j(x)J\colon\nabla\Phi(x)\,\de x\right|\\
&\le C\alpha_j \sum_{i=1}^{M_j} \int_{B_{r_j-\rho_j}}\left|\eta_{\xi_{i,j}}(y)\right|\,\de y\le C\alpha_jM_j(r_j-\rho_j)\le C(r_j-\rho_j)\to 0,
\end{align*}
where we also used~\eqref{eq:a-da-j-1} and~\eqref{eq inremark}. 
This gives~\eqref{eq:ajnuj-lim}. Therefore, by the compact embedding of $L^2(B_R;\R^2)$ into $H^{-1}(B_R;\R^2)$ we conclude that
$2\alpha_j\nu_j+\mu\to 0$ in $H^{-1}(B_R;\R^2)$ as $j\to\infty$. For $ w^{\rm rem}_j$ introduced in~\eqref{hatw}, this implies that 
$$2\alpha_j w^{\rm rem}_j\to 0\quad\text{in $H^1_0(B_R;\R^2)$ as $j\to\infty$}.$$
Recall the definition $\beta^{\rm rem}_j = \nabla w_j^{\rm rem}J^T$. By Remark~\ref{rem:gamma-a},~\eqref{eq:Qsd-est}, and Young's convolution inequality, as $j\to\infty$, we have 
\begin{equation}\label{eq:ls-con-3}
\|2\alpha_j\mathcal I^{\alpha_j}_{\rho_j}\beta^{\rm rem}_j\|_{L^2(\Omega)}\le \|Q_{\rho_j}^{1-{\alpha_j}}\|_{L^1(\R^2)}\|2\alpha_j\beta^{\rm rem}_j\|_{L^2(B_R)}\le\frac{2\pi\rho_j^{{\alpha_j}}}{\alpha_j\gamma_{{\alpha_j}}}\|2\alpha_j\nabla w^{\rm rem}_j\|_{L^2(B_R)}\to 0.
\end{equation}
This together with~\eqref{betabtetabta},~\eqref{eq:ls-con-1}, and~\eqref{eq:ls-con-2} gives~\eqref{eq:aj-bj-con}. 

{\bf Step~1.4:} \emph{Convergence of energies.} It remains to prove~\eqref{eq:gamma-limsup}. As in the proof of the $\Gamma$-liminf inequality, we define
$$
\Omega_j\coloneqq \bigcup_{i=1}^{M_j}B_{\rho_j}(x_{i,j})\subset\Omega\quad\text{for all $j\in\N$},
$$
and by~\eqref{all convergences},~\eqref{eq:a-da-j-1}, and~\eqref{eq inremark} we derive that
\begin{equation}\label{eq:Omegaj-con}
|\Omega_j|=\pi M_j\rho_j^2\le C N_j\rho_j^2 \to 0\quad\text{as $j\to\infty$}.
\end{equation}
We split the energy as 
\begin{align*}
\mathcal F_{\alpha_j}(\mu_j,\beta_j)&=2\alpha_j^2\int_{\Omega\setminus \Omega_j}\C\mathcal I^{\alpha_j}_{\rho_j}\beta_j(x):\mathcal I^{\alpha_j}_{\rho_j}\beta_j(x)\,\de x+2\alpha_j^2\int_{\Omega_j}\C\mathcal I^{\alpha_j}_{\rho_j}\beta_j(x):\mathcal I^{\alpha_j}_{\rho_j}\beta_j(x)\,\de x,
\end{align*}
and we study the two terms separately. 

By~\eqref{eq:ls-con-2},~\eqref{eq:ls-con-3}, and~\eqref{eq:Omegaj-con} we derive that
\begin{align*}
{
\mathcal I^{\alpha_j}_{\rho_j}\beta^{\rm reg}_j\chi_{\Omega_j}\to 0 \ \ \text{and} \ \ 2\alpha_j\mathcal I^{\alpha_j}_{\rho_j}\beta^{\rm rem}_j\chi_{\Omega_j}\to 0 \quad\text{in $L^2(\Omega;\R^{2\times 2})$ as $j\to\infty$},}
\end{align*}
which by~\eqref{betabtetabta} gives
\begin{align}\label{LLL1}
\limsup_{j\to\infty}2\alpha_j^2\int_{\Omega_j}\C\mathcal I^{\alpha_j}_{\rho_j}\beta_j(x):\mathcal I^{\alpha_j}_{\rho_j}\beta_j(x)\,\de x=\limsup_{j\to\infty} 2\alpha_j^2\int_{\Omega_j}\C\mathcal I^{\alpha_j}_{\rho_j}\zeta_j(x):\mathcal I^{\alpha_j}_{\rho_j}\zeta_j(x)\,\de x.
\end{align}
Since $\zeta_{i,j}=\eta_{\xi_{i,j}}(\,\cdot\,-x_{i,j})$ in $B_{(r_j-\rho_j)/2}(x_{i,j})$ and $\rho_j/r_j \to 0$, see Remark~\ref{rem:mu-appr}(iii), we infer that, for $j$ sufficiently large, 
$$
\mathcal I^{\alpha_j}_{\rho_j}\zeta_{i,j}=\mathcal I^{\alpha_j}_{\rho_j}\eta_{\xi_{i,j}}(\,\cdot\,-x_{i,j})\quad\text{in $B_{\rho_j}(x_{i,j})$}.
$$
Together with Lemma~\ref{lem:lower-est},~\eqref{eq:a-da-j-1},~\eqref{eq:unif-conv2},~\eqref{eq inremark},~\eqref{phiuse}, and~\eqref{eq:muj-mukj}--\eqref{mumumu}, we conclude that
\begin{align}\label{never-remove!}
&\limsup_{j\to\infty} 2\alpha_j^2\int_{\Omega_j}\C\mathcal I^{\alpha_j}_{\rho_j}\zeta_j(x):\mathcal I^{\alpha_j}_{\rho_j}\zeta_j(x)\,\de x\notag\\
&=\limsup_{j\to\infty} 2\alpha_j^2\sum_{i=1}^{M_j}\int_{B_{\rho_j}}\C\mathcal I^{\alpha_j}_{\rho_j}\eta_{\xi_{i,j}}(y):\mathcal I^{\alpha_j}_{\rho_j}\eta_{\xi_{i,j}}(y)\,\de y\notag\\
&\le \limsup_{j\to\infty}\left(4\alpha_j^2\sum_{i=1}^{M_j}\Psi(\xi_{i,j},\alpha_j,\rho_j)+4C\alpha_j^2\sum_{i=1}^{M_j}|\xi_{i,j}|^2\right)\\
&=\lim_{j\to\infty}\sum_{k=1}^M2\alpha_j|\mu_{k,j}|(\Omega)2\alpha_j\Psi(\xi_k,\alpha_j,\rho_j)=\sum_{k=1}^M\lambda_k|A| \psi(\xi_k)=|A|\varphi(\xi)=\int_\Omega\varphi\left(\frac{\de \mu}{\de|\mu|}\right)\de|\mu|. \notag
\end{align}
Here, we also used that $\sum_{i=1}^{M_j}\Psi(\xi_{i,j},\alpha_j,\rho_j) = \sum_{k=1}^M |\mu_{k,j}|(\Omega)\Psi (\xi_k,\alpha_j,\rho_j)$ by Remark~\ref{rem:mu-appr}(i). Next, by~\eqref{all convergences},~\eqref{eq:a-da-j-1},~\eqref{eq inremark}, and~\eqref{eq:zetaij-est}, as $j\to\infty$ we have 
\begin{align*}
\int_{\Omega \setminus \Omega_j}|2\alpha_j\mathcal I^{\alpha_j}_{\rho_j}\zeta_j(x)|^2\,\de x&=4\alpha_j^2\sum_{i=1}^{M_j}\int_{B_{r_j}(x_{i,j})\setminus B_{\rho_j}(x_{i,j})}|\mathcal I^{\alpha_j}_{\rho_j}\zeta_{i,j}(x)|^2\,\de x\\
&\le \frac{8\pi^3 K^2}{\gamma_{1+{\alpha_j}}^2} 2 \alpha_j\sum_{i=1}^{M_j}|\xi_{i,j}|^2(r_j^{2\alpha_j}-\rho_j^{2\alpha_j})\to 0,
\end{align*}
where in the last step we used $\rho_j^{\alpha_j} \to 1$ as $j \to \infty$. Hence, together with~\eqref{betabtetabta},~\eqref{eq:ls-con-2},~\eqref{eq:ls-con-3}, and~\eqref{eq:Omegaj-con} we derive that
\begin{align*}
&2\alpha_j\mathcal I^{\alpha_j}_{\rho_j}\beta_j\chi_{\Omega\setminus\Omega_j}\to \beta\quad\text{in $L^2(\Omega;\R^{2\times 2})$ as $j\to\infty$},
\end{align*}
which gives
\begin{align}\label{LLL2}
\lim_{j\to\infty}2\alpha_j^2\int_{\Omega\setminus \Omega_j}\C\mathcal I^{\alpha_j}_{\rho_j}\beta_j(x):\mathcal I^{\alpha_j}_{\rho_j}\beta_j(x)\,\de x=\frac{1}{2}\int_\Omega\C\beta(x):\beta(x)\,\de x.
\end{align}
Combining~\eqref{LLL1}--\eqref{LLL2} yields~\eqref{eq:gamma-limsup}, and concludes the proof in the case $\mu=\xi\chi_A\,\de x$ and $\beta\in L^2(\Omega;\R^{2\times 2})$.

{\bf Step~2.} Let $\beta\in L^2(\Omega;\R^{2\times 2})$ be such that 
$${\Curl\beta=\mu\coloneqq \sum_{l=1}^L\mu^l,\qquad \text{ where } \ \mu^l\coloneqq \xi^l\chi_{A^l} \,\de x \quad\text{for all $l\in\{1,\dots,L\}$},}$$
where $(\xi^l)_l\in\R^2$, $(A^l)_l$ are open, bounded, simply connected sets with Lipschitz boundary, and $\{A^l\}_{l=1}^L$ is a partition of $\Omega$. By arguing as in Step~1, see in particular~\eqref{tildebetaj} and~\eqref{eq:ls-con-2}, we can find $(\beta^{\rm reg}_j)_j\subset C(\overline B_R;\R^{2\times 2})$ such that 
\begin{align*}
\Curl\beta^{\rm reg}_j=\mu \quad\text{in $\mathcal D'(B_R;\R^2)$},\qquad \mathcal I^{\alpha_j}_{\rho_j}\beta^{\rm reg}_j\to \beta\quad\text{in $L^2(\Omega;\R^{2\times 2})$ as $j\to\infty$}.
\end{align*}
Moreover, for all $l\in\{1,\dots,L\}$ there exist $(\mu_j^l)_j\subset \mathcal M(\R^2;\R^2)$, $(\nu_j^l)_j\subset L^\infty(\R^2;\R^2)$, $(\zeta_j^l)_j\subset L^1_{\loc}(\R^2;\R^{2\times 2})$, and $((\beta^{\rm rem}_j)^l)_j\subset L^2(B_R;\R^{2\times 2})$ such that 
\begin{align}\label{eq: zeta supp}
&(\supp\zeta_j^l)_{\rho_j}\subset A^l,\qquad\text{for all $j\in\N$}
\end{align}
and, see~\eqref{eq:nuj-cr}--\eqref{hatw},
\begin{align*}
&\Curl\zeta_j^l=\mu_j^l+\nu_j^l\quad\text{in $\mathcal D'(\R^2;\R^2)$}, \qquad \Curl(\beta^{\rm rem}_j)^l=-\frac{\xi^l\chi_{A^l}\,\de x}{2\alpha_j}-\nu_j^l\quad\text{in $\mathcal D'(B_R;\R^2)$ for all $j\in\N$}
\end{align*}
as well as (see~\eqref{mumumu},~\eqref{eq:ls-con-1}, and~\eqref{eq:ls-con-3}) 
\begin{align*}
2\alpha_j\mu^l_j \xrightharpoonup{*} \xi^l\chi_A^l\,\de x \quad\text{in $\mathcal M(\R^2;\R^2)$}, \qquad 2\alpha_j\mathcal I^{\alpha_j}_{\rho_j} \big( \zeta_j^l + (\beta^{\rm rem}_j)^l\big) \rightharpoonup 0\quad \text{in $L^2(\Omega;\R^{2\times 2})$ as $j\to\infty$}.
\end{align*}
We define
$$\mu_j\coloneqq \sum_{l=1}^L \mu_j^l\qquad\text{ and } \qquad\beta_j\coloneqq \frac{\beta^{\rm reg}_j}{2\alpha_j}+\sum_{l=1}^L(\zeta_j^l+(\beta^{\rm rem}_j)^l)\quad\text{for all $j\in\N$}.$$
Then, we have that $(\mu_j,\beta_j)\in\mathcal X_{\alpha_j}\times\mathcal A_{\alpha_j}(\mu_j)$ for all $j\in\N$ and
$$2\alpha_j\mu_j\xrightharpoonup{*}\mu\quad\text{in $\mathcal M(\Omega;\R^2)$ as $j\to\infty$},\qquad 2\alpha_j\mathcal I^{\alpha_j}_{\rho_j}\beta_j\rightharpoonup\beta\quad\text{in $L^2(\Omega;\R^{2\times 2})$ as $j\to\infty$}.$$
Moreover, as in Step 1, in view of~\eqref{eq: zeta supp}, for all $l\in\{1,\dots,L\}$ we derive
$$\limsup_{j\to\infty}2\alpha_j^2\int_{A^l}\C\mathcal I^{\alpha_j}_{\rho_j}\beta_j(x):\mathcal I^{\alpha_j}_{\rho_j}\beta_j(x)\,\de x\le \frac{1}{2}\int_{A^l}\C\beta(x):\beta(x)\,\de x+\int_{A^l}\varphi\left(\frac{\de\mu}{\de|\mu|}\right)\,\de|\mu|.$$
Therefore, since $|\Omega\setminus \bigcup_{l=1}^L A^l|=0$, we conclude
$$\limsup_{j\to\infty}2\alpha_j\int_\Omega\C\mathcal I^{\alpha_j}_{\rho_j}\beta_j(x):\mathcal I^{\alpha_j}_{\rho_j}\beta_j(x)\,\de x\le \frac{1}{2}\int_\Omega\C\beta(x):\beta(x)\,\de x+\int_\Omega\varphi\left(\frac{\de\mu}{\de|\mu|}\right)\,\de|\mu|.$$

{\bf Step~3.} Let $\mu\in\mathcal M(\Omega;\R^2)\cap H^{-1}(\Omega;\R^2)$ and $\beta\in L^2(\Omega;\R^{2\times 2})$ be such that $\Curl\beta=\mu$ in $\mathcal D'(\Omega;\R^2)$. As shown in~\cite[Proof of Theorem~12, Step 3 of $\Gamma$-limsup]{GLP}, we can find a sequence $(\mu^n,\beta^n)_n\subset (\mathcal M(\Omega;\R^2)\cap H^{-1}(\Omega;\R^2))\times L^2(\Omega;\R^{2\times 2})$ such that 
\begin{align}
&\mu^n\xrightharpoonup{*} \mu\quad\text{in $\mathcal M(\Omega;\R^2)$ as $n\to\infty$},& &\beta^n\to \beta\quad\text{in $L^2(\Omega;\R^{2\times 2})$ as $n\to\infty$},\label{eq:mun-appr1}\\
&|\mu^n|(\Omega)\to |\mu|(\Omega)\quad\text{as $n\to\infty$},& &\Curl\beta^n=\mu^n\quad\text{in $\mathcal D'(\Omega;\R^2)$ for all $n\in\N$},\label{eq:mun-appr2} 
\end{align}
where $\mu^n$ is as in Step~2 for all $n\in\N$. In view of~\eqref{eq:F-limit} and~\eqref{eq:mun-appr1}--\eqref{eq:mun-appr2}, by Reshetnyak's continuity theorem it follows that 
\begin{equation*}
\lim_{n\to\infty}\mathcal F(\mu^n,\beta^n)=\mathcal F(\mu,\beta).
\end{equation*}
By Step~2, for all $n\in\N$ there exists a sequence $(\mu_j^n,\beta_j^n)_j$ with $(\mu_j^n,\beta_j^n)\in \mathcal X_{\alpha_j}\times \mathcal A_{\alpha_j}(\mu_j^n)$ for all $j\in\N$ such that, as $j \to \infty$,
\begin{align*}
&2\alpha_j\mu_j^n\xrightharpoonup{*} \mu^n\ \text{in $\mathcal M(\Omega;\R^2)$},& &2\alpha_j\mathcal I^{\alpha_j}_{\rho_j}\beta_j^n\rightharpoonup \beta^n\ \text{in $L^2(\Omega;\R^{2\times 2})$}, & & \limsup_{j\to\infty}\mathcal F_{\alpha_j}(\mu_j^n,\beta_j^n)\le \mathcal F(\mu^n,\beta^n).
\end{align*}
Thus, to obtain~\eqref{eq:gamma-limsup} it is enough to use a standard diagonal argument.
\end{proof}

\section{The subcritical and supercritical regimes}\label{s:proofs2}

This section is devoted to the proofs of the $\Gamma$-limit of $\mathcal F_\alpha$ in the subcritical and supercritical regime. Since the proofs are similar to those of the critical regime, we only highlight the essential differences.

\subsection{The subcritical regime}

In the subcritical regime $(N_\alpha\ll \frac{1}{\alpha}$ as $\alpha\to 0$), we fix three sequences $(\alpha_j)_j$, $(\rho_j)_j$, and $(N_j)_j$ satisfying~\eqref{all convergences} and~\eqref{eq:a-da-j-4}. We recall the functionals defined in~\eqref{Falphasub} and~\eqref{subcritlimit}. 

\begin{proof}[Proof of Theorem~\ref{t:subcritical}]
{\bf Compactness and $\Gamma$-liminf inequality.} The proofs are similar to the ones of Theorem~\ref{t:critical}(i),(ii), see Remark~\ref{remark:forsub}. We only need to check that the convergences in~\eqref{eq:con-sub-1}--\eqref{eq:con-sub-2} imply $\Curl\beta=0$ in $\mathcal D'(\Omega;\R^2)$. To this end, we fix $\Phi\in C_c^\infty(\Omega;\R^2)$. Then, by~\eqref{eq:con-sub-2}, 
\begin{align*}
\langle \Curl\beta,\Phi\rangle_{\mathcal D'(\Omega)}&=\lim_{j\to\infty}\frac{\sqrt{2\alpha_j}}{\sqrt{N_j}}\langle \Curl (\mathcal I^{\alpha_j}_{\rho_j}\beta_j-S_j),\Phi\rangle_{\mathcal D'(\Omega)}=\lim_{j\to\infty}\frac{\sqrt{2\alpha_j}}{\sqrt{N_j}}\langle \Curl \mathcal I^{\alpha_j}_{\rho_j} \beta_j,\Phi\rangle_{\mathcal D'(\Omega)},
\end{align*} 
and by Lemma~\ref{lem:curl-Iadxi}, see particularly~\eqref{lastli}, we have
\begin{align*}
\frac{\sqrt{2\alpha_j}}{\sqrt{N_j}}\langle \Curl \mathcal I^{\alpha_j}_{\rho_j} \beta_j,\Phi\rangle_{\mathcal D'(\Omega)}=\sqrt{2\alpha_jN_j}\int_{\R^2}\mathcal I^{\alpha_j}_{\rho_j}\Phi(x)\cdot\de\frac{1}{N_j}\mu_j.
\end{align*}
For $j$ sufficiently large, we derive that $\mathcal I^{\alpha_j}_{\rho_j}\Phi\in C_c^\infty(\Omega;\R^2)$ and $\mathcal I^{\alpha_j}_{\rho_j}\Phi\to \Phi$ in $C(\overline\Omega;\R^2)$ as $j\to\infty$, see Corollary~\ref{coro:Iad-conv}. Therefore, by~\eqref{eq:a-da-j-4} and~\eqref{eq:con-sub-1} 
\begin{align*}
\langle \Curl\beta,\Phi\rangle_{\mathcal D'(\Omega)}&=\lim_{j\to\infty}\sqrt{2\alpha_jN_j}\int_\Omega\mathcal I^{\alpha_j}_{\rho_j}\Phi(x)\cdot\de\frac{1}{N_j}\mu_j=\lim_{j\to\infty}\sqrt{2\alpha_jN_j}\int_\Omega\Phi(x)\cdot\,\de \mu=0,
\end{align*}
which gives $\Curl\beta=0$ in $\mathcal D'(\Omega;\R^2)$. 

{\bf $\Gamma$-limsup inequality.} As in the proof of Theorem~\ref{t:critical}(iii), we proceed in three steps. We briefly explain only the first one, as the other two are analogous to the critical regime with the only difference that in Step~3 we approximate only the measure $\mu$, given that $\beta$ is independent of $\mu$.

Let $\beta\in L^2(\Omega;\R^{2\times 2})$ with $\Curl\beta=0$ in $\mathcal D'(\Omega;\R^2)$, and let $\mu\coloneqq \xi\chi_A\,\de x$, where $\xi\in\R^2$ and $A\subset\Omega$ is an open, bounded, simply connected set with Lipschitz boundary. We write $\xi=\sum_{k=1}^M\lambda_k\xi_k$, where $\xi_k\in\mathbb S$ and $\lambda_k\ge 0$, such that $\varphi(\xi)=\sum_{k=1}^M\lambda_k\psi(\xi_k)$, see~\eqref{eq:varphi}. 

Since $\Curl\beta=0$ in $\mathcal D'(\Omega;\R^2)$, there exists $v\in H^1(\Omega;\R^2)$ such that 
$$
\beta=\nabla v\quad\text{in $\Omega$}.
$$
Let $(\Phi_j)_j\subset C^\infty_c(\R^2;\R^2)$ be such that $\nabla\Phi_j\to\nabla v$ in $L^2(\Omega;\R^2)$ as $j\to\infty$, and consider the linear operator $P_{\rho_j}^{1-{\alpha_j}}\colon\mathcal S(\R^2;\R^2)\to \mathcal S(\R^2;\R^2)$ given by Proposition~\ref{prop:Psd}. We define
$$
\beta^{\rm reg}_j\coloneqq \nabla (P_{\rho_j}^{1-{\alpha_j}}\Phi_j)\quad\text{in $\R^2$ for all $j\in\N$},
$$
and by construction $\beta^{\rm reg}_j\in \mathcal S(\R^2;\R^{2\times 2})$ with $\Curl\beta^{\rm reg}_j=0$ in $\mathcal D'(\R^2;\R^2)$. 

Let $(\mu_j)_j\subset\mathcal M(\R^2;\R^2)$ be the sequence of measures given by Lemma~\ref{lem:mu-appr} associated to $(N_j)_j$. We consider the functions $\zeta_{i,j}$, $\zeta_j$, and $\nu_j$ defined in~\eqref{eq:zetaij-cr},~\eqref{eq:zetaj-cr}, and~\eqref{eq:nuj-cr}, respectively. Let $R>0$ be such that $\Omega\subset\subset B_R$. We consider the solution $ w^{\rm rem}_j\in H^1_0(B_R;\R^2)$ to 
\begin{equation*}
\begin{cases}
\Delta w^{\rm rem}_j=-\nu_j &\text{in $B_R$},\\
 w^{\rm rem}_j=0 &\text{on $\partial B_R$},
\end{cases}
\end{equation*}
and we set $\beta^{\rm rem}_j\coloneqq \nabla w_j^{\rm rem}J^T\in L^2(B_R;\R^{2\times 2})$. We define
\begin{equation*}
\beta_j\coloneqq \frac{\sqrt{N_j}}{\sqrt{2\alpha_j}}\beta^{\rm reg}_j+\zeta_j+\beta^{\rm rem}_j\quad\text{in $B_R$}.
\end{equation*}
Arguing as in~\eqref{curlrechnung}, we find by construction that $\beta_j\in\mathcal A_{\alpha_j}(\mu_j)$ for all $j\in\N$. Moreover, by proceeding along the lines of~\eqref{eq:ls-con-1},~\eqref{eq:ls-con-2}, and~\eqref{eq:ls-con-3}, the sequence $(\mu_j,\beta_j)_j$ satisfies~\eqref{eq:con-sub-1}--\eqref{eq:con-sub-2}. Only the derivation of~\eqref{eq:ls-con-3} is slightly different and relies on the estimate (see~\eqref{nununu}) 
$$
\frac{\sqrt{2\alpha_j}}{\sqrt{N_j}}\|\nu_j\|_{L^\infty(B_R)}\le C\frac{\sqrt{2\alpha_jN_j}}{N_j(r_j-\rho_j)^2}\to 0\quad\text{as $j\to\infty$},
$$
where we used~\eqref{eq:a-da-j-4},~\eqref{eq:rj}, and Remark~\ref{rem:mu-appr}(iii). 
Then, we proceed as in Step 1 of the proof of Theorem~\ref{t:critical}(iii) to obtain~\eqref{eq:gamma-limsup-sub}.
\end{proof}

\subsection{The supercritical regime}

In the supercritical regime ($N_\alpha\gg \frac{1}{\alpha}$ as $\alpha\to 0$), we fix three sequences $(\alpha_j)_j$, $(\rho_j)_j$, and $(N_j)_j$ satisfying~\eqref{all convergences} and~\eqref{eq:a-da-j-6}. We recall the functionals defined in~\eqref{Falphasuper} and~\eqref{supercritlimit}. 

\begin{proof}[Proof of Theorem~\ref{t:supercritical}]

{\bf Compactness and $\Gamma$-liminf inequality.} By (C1)--(C3) we have
\begin{align*}
\frac{1}{N_j^2}\|\mathcal I^{\alpha_j}_{\rho_j}\beta_j^{\sym}\|_{L^2(\Omega)}^2\le \frac{1}{\nu_1 N_j^2}\int_\Omega \C\mathcal I^{\alpha_j}_{\rho_j}\beta_j(x):\mathcal I^{\alpha_j}_{\rho_j}\beta_j(x)\,\de x\le C. 
\end{align*}
Hence, there is $\beta\in L^2(\Omega;\R^{2\times2}_{\sym})$ such that, up to a not relabeled subsequence, 
\begin{equation*}
\frac{1}{N_j}\mathcal I^{\alpha_j}_{\rho_j}\beta_j^{\sym}\rightharpoonup\beta\quad\text{in $L^2(\Omega;\R^{2\times 2}_{\sym})$}\quad\text{ as $j\to\infty$}.
\end{equation*}
For the $\Gamma$-liminf inequality, it is enough to observe that the convergence in~\eqref{eq:con-sup} implies
\begin{align*}
&\liminf_{j\to\infty}\frac{1}{2 N_j^2}\int_\Omega\C\mathcal I^{\alpha_j}_{\rho_j}\beta_j(x):\mathcal I^{\alpha_j}_{\rho_j}\beta_j(x)\,\de x\\
&=\liminf_{j\to\infty}\frac{1}{2 N_j^2}\int_\Omega\C\mathcal I^{\alpha_j}_{\rho_j}\beta_j^{\sym}(x):\mathcal I^{\alpha_j}_{\rho_j}\beta_j^{\sym}(x)\,\de x\ge \frac{1}{2}\int_\Omega\C\beta(x):\beta(x)\,\de x.
\end{align*}

{\bf $\Gamma$-limsup inequality.} The proof of the $\Gamma$-limsup inequality is divided into two steps.

{\bf Step~1.} We fix $\beta\in C_c^\infty(\Omega;\R^{2\times 2}_{\sym})$ and we define $\mu\coloneqq \Curl\beta\,\de x\in\mathcal M(\Omega;\R^2)$. By arguing as in~\cite[Theorem~18]{GLP} there exists a constant $C>0$ (depending on $\|\Curl\beta\|_{L^\infty(\Omega)}$) and a sequence of measures $(\mu_j)_j\subset\mathcal M(\R^2;\R^2)$ such that 
\begin{align}
\mu_j= \sum_{i=1}^{M_j}\xi_{i,j}\delta_{x_{i,j}}\quad\text{for all $j$},\qquad |\xi_{i,j}|\le C\quad\text{for all $j$},\qquad \frac{M_j}{N_j}\le C\quad\text{for all $j$},
\label{eq:muj-est-sup}
\end{align}
with
$$
B_{r_j}(x_{i,j})\subset\Omega\quad\text{for all $i,j$},\qquad |x_{i,j}-x_{k,j}|\ge 2r_j\quad\text{if $i\neq k$, for all $j$},\qquad r_j\coloneqq \frac{C}{\sqrt{N_j}}\quad\text{for all $j$},
$$
and satisfying
\begin{align*}
&\frac{1}{N_j}\mu_j \xrightharpoonup{*}\mu\quad\text{in $\mathcal M(\R^2;\R^2)$ as $j\to\infty$}.
\end{align*}
We consider the functions $\zeta_{i,j}$, $\zeta_j$, and $\nu_j$ defined in~\eqref{eq:zetaij-cr},~\eqref{eq:zetaj-cr}, and~\eqref{eq:nuj-cr}, respectively. Moreover, let $R>0$ be such that $\Omega\subset\subset B_R$ and let $ w^{\rm rem}_j\in H^1_0(B_R;\R^2)$ be the solution to 
\begin{equation*}
\begin{cases}
\Delta w^{\rm rem}_j=-N_j\mu-\nu_j &\text{in $B_R$},\\
 w^{\rm rem}_j=0 &\text{on $\partial B_R$}.
\end{cases}
\end{equation*}
We define $\beta^{\rm rem}_j\coloneqq \nabla w_j^{\rm rem}J^T\in L^2(B_R;\R^{2\times 2})$ and we set
\begin{equation*}
\beta_j\coloneqq N_j\beta+\zeta_j+\beta^{\rm rem}_j\quad\text{in $B_R$}.
\end{equation*}
Similar to~\eqref{curlrechnung}, by construction $\beta_j\in\mathcal A_{\alpha_j}(\mu_j)$ for all $j\in\N$, and we claim that 
\begin{equation}\label{eq:aj-bj-con-sup}
\frac{1}{N_j}\mathcal I^{\alpha_j}_{\rho_j}\beta_j\to \beta\quad\text{in $L^2(\Omega;\R^{2\times 2})$ as $j\to\infty$}.
\end{equation}
By~\eqref{eq:a-da-j-6},~\eqref{eq:zetaij-est-new}, and~\eqref{eq:muj-est-sup}, we can find a constant $C>0$ (independent of $j$) such that
\begin{align}\label{eq:ls-con-sup-1}
\frac{1}{N_j^2}\int_\Omega|\mathcal I^{\alpha_j}_{\rho_j}\zeta_j(x)|^2\,\de x\le \frac{C}{(\alpha_j N_j)^2}\to 0\quad\text{as $j\to\infty$},
\end{align}
i.e., $\frac{1}{N_j}\mathcal I^{\alpha_j}_{\rho_j}\zeta_j\to 0$ in $L^2(\Omega;\R^{2\times 2})$ as $j\to\infty$. Moreover, thanks to Corollary~\ref{coro:Iad-conv}, we derive that
\begin{align}\label{eq:ls-con-sup-2}
\mathcal I^{\alpha_j}_{\rho_j}\beta\to\beta\quad\text{in $L^2(\Omega;\R^{2\times 2})$ as $j\to\infty$},
\end{align}
and, by arguing as in~\eqref{eq:ls-con-3} (note that we can identify $2\alpha_j$ with $1/N_j$ in~\eqref{hatw} and~\eqref{eq:ls-con-3}) we have $
\frac{1}{N_j}\mathcal I^{\alpha_j}_{\rho_j}\beta^{\rm rem}_j\to 0$ in $L^2(\Omega;\R^{2\times 2})$ as $j\to\infty$. By combining this with~\eqref{eq:ls-con-sup-1} and~\eqref{eq:ls-con-sup-2}, we obtain~\eqref{eq:aj-bj-con-sup}.

Finally, we use~\eqref{eq:aj-bj-con-sup} to get
\begin{align*}
&\lim_{j\to\infty}\frac{1}{2N_j^2}\int_\Omega\C\mathcal I^{\alpha_j}_{\rho_j}\beta_j(x):\mathcal I^{\alpha_j}_{\rho_j}\beta_j(x)\,\de x=\frac{1}{2}\int_\Omega\C\beta(x):\beta(x)\,\de x,
\end{align*}
which proves~\eqref{eq:gamma-limsup-sup} in the case $\beta\in C_c^\infty(\Omega;\R^{2\times 2}_{\sym})$.

{\bf Step~2.} Let $\beta \in L^2(\Omega;\R^{2\times 2}_{\sym})$. Then, there exists a sequence $(\beta^n)_n\subset C_c^\infty(\Omega;\R^{2\times 2}_{\sym})$ such that 
\begin{align*}
&\beta^n\to\beta\quad\text{in $L^2(\Omega;\R^{2\times 2}_{\sym})$ as $n\to\infty$},
\end{align*}
which implies
\begin{equation*}
\lim_{n\to\infty}\mathcal F^{\rm super}(\beta^n)=\mathcal F^{\rm super}(\beta).
\end{equation*}
By Step~1, for all $n\in\N$ there exists a sequence $(\mu_j^n,\beta_j^n)_j$ with $(\mu_j^n,\beta_j^n)\in \mathcal X_{\alpha_j}\times \mathcal A_{\alpha_j}(\mu_j^n)$ for all $j\in\N$ such that 
\begin{align*}
&\frac{1}{N_j}\mathcal I^{\alpha_j}_{\rho_j}\beta_j^n\to \beta^n\quad\text{in $L^2(\Omega;\R^{2\times 2})$ as $j\to\infty$}, \quad \quad 
\lim_{j\to\infty}\mathcal F_{\alpha_j}(\mu_j^n,\beta_j^n)= \mathcal F^{\rm super}(\beta^n).
\end{align*}
Therefore, to obtain~\eqref{eq:gamma-limsup-sup}, it is enough to use a standard diagonal argument.
\end{proof}

\section*{Acknowledgments}
The work of S.A.\ was supported by the Austrian Science Fund through the project 10.55776/P353\\59 and by the University of Naples Federico II through the FRA project ``ReSinApas''. The work of S.A., M.C., and F.S.\ is part of the MUR - PRIN 2022, project Variational Analysis of Complex Systems in Materials Science, Physics and Biology, No. 2022HKBF5C, funded by European Union NextGenerationEU. The work of M.C.\ and F.S.\ is part of the project STAR PLUS 2020 - Linea 1 (21-UNINA-EPIG-172) ``New perspectives in the Variational modeling of Continuum Mechanics'' from the University of Naples Federico II and Compagnia di San Paolo. The work of M.C.\ was supported by the INdAM-GNAMPA Project ``Pairing e div-curl lemma: estensioni a campi debolmente derivabili e differenziazione non locale'' (CUP E53C23001670001). The work of M.F.\ was funded by the DFG project FR 4083/5-1 and by the Deutsche Forschungsgemeinschaft (DFG, German Research Foundation) under Germany's Excellence Strategy EXC 2044 -390685587, Mathematics M\"unster: Dynamics--Geometry--Structure. 

\noindent Finally, S.A., M.C., and F.S. are part of the Gruppo Nazionale per l'Analisi Matematica, la Probabilit\`a e le loro Applicazioni (INdAM-GNAMPA).

\appendix

\section{Riesz potentials}

In the appendix, we collect some results regarding the Riesz potentials used in this paper. We start with the classical Riesz potential $\mathcal I^\alpha$.

\begin{proposition}\label{prop:Riesz}
Let $\alpha\in(0,2)$ and let $f\colon\R^2\to\R$ be a measurable function satisfying~\eqref{eq:Ia-hyp}. Then $\mathcal I^\alpha f(x)$ is well-defined for a.e.\ $x\in\R^2$ and $\mathcal I^\alpha f\in L^1_{\loc}(\R^2)$. 
\end{proposition}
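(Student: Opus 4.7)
The plan is a direct Tonelli/Fubini argument: I will show that for every compact $K\subset\R^2$ the double integral of $(x,y)\mapsto f(y)/|x-y|^{2-\alpha}$ over $K\times\R^2$ (in absolute value) is finite, using~\eqref{eq:Ia-hyp}. This yields both claims simultaneously, namely $\mathcal I^\alpha f\in L^1_{\loc}(\R^2)$ and the a.e.\ finiteness of the defining integral.

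\textbf{Step 1: the geometric kernel estimate.} Fix a compact set $K\subset\R^2$ and choose $R>0$ with $K\subset B_R$. I would show that there exists a constant $C_K>0$ (depending on $K$ and $\alpha$) such that
\begin{equation*}
\int_K\frac{1}{|x-y|^{2-\alpha}}\,\de x\le \frac{C_K}{(1+|y|)^{2-\alpha}}\quad\text{for all }y\in\R^2.
\end{equation*}
For $|y|\le 2R$ one uses that $K-y\subset B_{3R}$, hence by polar coordinates
$\int_K |x-y|^{-(2-\alpha)}\,\de x\le \int_{B_{3R}}|z|^{-(2-\alpha)}\,\de z=2\pi(3R)^{\alpha}/\alpha$, which is bounded by a constant times $(1+|y|)^{-(2-\alpha)}$ on $\{|y|\le 2R\}$. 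For $|y|>2R$ one has $|x-y|\ge |y|-R\ge |y|/2$ for every $x\in K$, so the integral is bounded by $|K|\,2^{2-\alpha}|y|^{-(2-\alpha)}$, which again is comparable to $(1+|y|)^{-(2-\alpha)}$. Gluing the two regimes gives the claim.

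\textbf{Step 2: apply Tonelli and conclude.} By Tonelli's theorem and Step~1,
\begin{equation*}
\int_K\int_{\R^2}\frac{|f(y)|}{|x-y|^{2-\alpha}}\,\de y\,\de x
=\int_{\R^2}|f(y)|\int_K\frac{1}{|x-y|^{2-\alpha}}\,\de x\,\de y
\le C_K\int_{\R^2}\frac{|f(y)|}{(1+|y|)^{2-\alpha}}\,\de y,
\end{equation*}
which is finite by~\eqref{eq:Ia-hyp}. Hence, for a.e.\ $x\in K$, the integrand $y\mapsto f(y)/|x-y|^{2-\alpha}$ is absolutely integrable on $\R^2$, so $\mathcal I^\alpha f(x)$ is well-defined. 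Dividing the above inequality by $\gamma_\alpha$ yields $\int_K|\mathcal I^\alpha f(x)|\,\de x<\infty$, i.e.\ $\mathcal I^\alpha f\in L^1(K)$. Exhausting $\R^2$ by an increasing sequence of compact sets then gives the a.e.\ well-definedness on all of $\R^2$ and $\mathcal I^\alpha f\in L^1_{\loc}(\R^2)$.

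The only nontrivial point is the uniform kernel bound in Step~1; once that is in hand the rest is a one-line application of Tonelli. Since $2-\alpha<2$, the singularity of $|x-y|^{-(2-\alpha)}$ is locally integrable in dimension two, and the decay factor $(1+|y|)^{-(2-\alpha)}$ matches exactly the weight in hypothesis~\eqref{eq:Ia-hyp}, which is why~\eqref{eq:Ia-hyp} is the natural sharp condition ensuring the statement.
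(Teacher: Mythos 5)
Your argument is correct. The paper's own proof does not give a derivation at all: it simply cites Mizuta's monograph (Theorem~1.1 and Section~4.2 of Chapter~2) for both the a.e.\ well-definedness of $\mathcal I^\alpha f$ and its local integrability. Your proposal instead supplies a self-contained proof via the standard Tonelli device: the uniform kernel estimate $\int_K |x-y|^{-(2-\alpha)}\,\de x\le C_K(1+|y|)^{-(2-\alpha)}$ of Step~1 is exactly right, combining the local integrability of $|z|^{-(2-\alpha)}$ near the origin (using $\alpha>0$) with its polynomial decay at infinity (using $\alpha<2$), and the resulting weight matches hypothesis~\eqref{eq:Ia-hyp} verbatim, so Tonelli finishes the job. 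The two approaches arrive at the same conclusion, but your version has the advantage of being self-contained and of making transparent why~\eqref{eq:Ia-hyp} is precisely the natural integrability condition; the paper's citation-based proof is shorter but opaque to a reader without the reference at hand.
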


\begin{proof}
Let $f\colon\R^2\to\R$ be satisfying~\eqref{eq:Ia-hyp}. The fact that $\mathcal I^\alpha f(x)$ is well-defined for a.e.\ $x\in\R^2$ is a consequence of~\cite[Theorem~1.1, Chapter~2]{Mizuta}. Moreover, in this case, $\mathcal I^\alpha f$ is locally integrable on $\R^2$, as observed in~\cite[Section~4.2]{Mizuta}. 
\end{proof}

We also recall the following composition formula for the Riesz potential.

\begin{proposition}[{\cite[Theorem~1.6, Chapter~2]{Mizuta}}]\label{prop:Riesz-comp}
Let $\alpha,\beta\in (0,2)$ be such that $\alpha+\beta\in (0,2)$. Then,
\begin{equation*}
\int_{\R^2}\frac{1}{|x-y|^{2-\alpha}}\frac{1}{|y-z|^{2-\beta}}\,\de y=\frac{\gamma_\alpha\gamma_\beta}{\gamma_{\alpha+\beta}}\frac{1}{|x-z|^{2-\alpha-\beta}}\quad\text{for all $x,z\in\R^2$ with $x\neq z$}.
\end{equation*}
\end{proposition}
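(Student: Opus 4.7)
The plan is to prove the composition identity by exploiting the translation, scaling, and rotation invariances of the integrand to reduce it to a single numerical constant, and then to identify that constant as $\gamma_\alpha \gamma_\beta / \gamma_{\alpha+\beta}$. Setting $h \coloneqq x-z$ and substituting $w = y-z$, the integral takes the form
\[
I(h) \coloneqq \int_{\R^2} \frac{\de w}{|h-w|^{2-\alpha} |w|^{2-\beta}}.
\]
A further change of variable $w \mapsto |h| w$ gives $I(h) = |h|^{\alpha+\beta-2} I(h/|h|)$, and rotational invariance of the integrand reduces the problem to computing the single number $C(\alpha,\beta) \coloneqq I(e_1)$. Note that the integrability of the integrand (away from the two poles $0$ and $e_1$, and at infinity, given the hypothesis $\alpha+\beta \in (0,2)$) has to be checked at this stage to justify the change of variables.

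To identify $C(\alpha,\beta)$, the cleanest route is via Fourier analysis on tempered distributions. First I would establish that in $\R^2$ the distributional Fourier transform of the locally integrable function $|x|^{-(2-\alpha)}$ equals $\gamma_\alpha |\xi|^{-\alpha}$ for $\alpha \in (0,2)$. This follows from the subordination identity
\[
|x|^{-(2-\alpha)} = \frac{1}{\Gamma\big(\frac{2-\alpha}{2}\big)} \int_0^\infty s^{-\alpha/2} e^{-s|x|^2} \,\de s,
\]
combined with the explicit Gaussian transform $\mathcal F (e^{-s|x|^2})(\xi) = (\pi/s) e^{-|\xi|^2/(4s)}$, Fubini, and the substitution $u = |\xi|^2/(4s)$, which after a Gamma integral produces precisely the prefactor $\pi 2^\alpha \Gamma(\alpha/2)/\Gamma((2-\alpha)/2) = \gamma_\alpha$. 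As a consequence, the Riesz potential $\mathcal I^\alpha$ is the Fourier multiplier with symbol $|\xi|^{-\alpha}$, and for $\alpha+\beta \in (0,2)$ one obtains the operator identity $\mathcal I^\alpha \circ \mathcal I^\beta = \mathcal I^{\alpha+\beta}$.

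To extract the pointwise formula from this operator identity, I would apply both sides to an arbitrary nonnegative Schwartz function $f$ and expand the convolutions. By Fubini, the left-hand side becomes
\[
\frac{1}{\gamma_\alpha \gamma_\beta} \int_{\R^2} f(z) \left( \int_{\R^2} \frac{\de y}{|x-y|^{2-\alpha} |y-z|^{2-\beta}} \right) \de z,
\]
while the right-hand side equals $\gamma_{\alpha+\beta}^{-1} \int_{\R^2} f(z) |x-z|^{-(2-\alpha-\beta)} \,\de z$. Since the inner integral on the left depends only on $x-z$ via a positive continuous function on $\R^2 \setminus \{0\}$ by the reduction above, varying $f$ among nonnegative Schwartz functions forces the pointwise equality for all $x \neq z$.

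The main obstacle is the distributional Fourier transform identity for $|x|^{-(2-\alpha)}$: the algebraic bookkeeping of Gamma-function constants is the delicate point, and one also needs to justify Fubini in the subordination integral (e.g., by first testing both sides against Schwartz functions in $\xi$). A fully self-contained alternative that bypasses Fourier analysis is to evaluate $I(e_1)$ directly by passing to polar coordinates, representing the angular integral $\int_0^{2\pi} (1 - 2r\cos\theta + r^2)^{-(2-\alpha)/2} \,\de\theta$ as a Gauss hypergeometric function, and reducing the radial integral to a Beta integral; the identity $B(p,q) = \Gamma(p)\Gamma(q)/\Gamma(p+q)$ combined with the explicit form of $\gamma_\alpha$ then yields the stated constant. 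I would favor the Fourier route for its conceptual clarity, reserving the direct computation only if a self-contained argument is needed.
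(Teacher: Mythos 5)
The paper does not prove this proposition; it is cited directly from Mizuta's monograph without argument, so there is no in-paper proof to compare against. Your Fourier-analytic route is a standard and correct way to establish the Riesz composition formula, and the bookkeeping checks out: the subordination identity $|x|^{-(2-\alpha)}=\Gamma(\tfrac{2-\alpha}{2})^{-1}\int_0^\infty s^{-\alpha/2}e^{-s|x|^2}\,\de s$ together with $\mathcal F(e^{-s|\cdot|^2})(\xi)=(\pi/s)e^{-|\xi|^2/(4s)}$ and the change of variable $u=|\xi|^2/(4s)$ produces exactly $\gamma_\alpha|\xi|^{-\alpha}$, so that $\mathcal I^\alpha$ is the multiplier $|\xi|^{-\alpha}$ and $\mathcal I^\alpha\circ\mathcal I^\beta=\mathcal I^{\alpha+\beta}$ whenever $\alpha+\beta\in(0,2)$. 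Combining this operator identity with your initial reduction (translation, rotation, and the dilation scaling $I(h)=|h|^{\alpha+\beta-2}I(h/|h|)$, all legitimate once local and asymptotic integrability is checked, which the hypothesis $\alpha+\beta<2$ guarantees) and testing against nonnegative Schwartz functions correctly forces $C(\alpha,\beta)=\gamma_\alpha\gamma_\beta/\gamma_{\alpha+\beta}$. You flag the two genuine technical points — justifying Fubini in the subordination representation and passing from the distributional multiplier identity to the pointwise convolution formula — and your proposed remedies (test against Schwartz functions, use Tonelli for the nonnegative triple integral, then invoke continuity in $z\ne x$) are exactly what is needed. One minor point worth making explicit in a write-up: you should verify that $\mathcal I^\beta f$ satisfies the integrability hypothesis~\eqref{eq:Ia-hyp} with exponent $\alpha$, which follows from the decay $|\mathcal I^\beta f(y)|\lesssim(1+|y|)^{-(2-\beta)}$ for Schwartz $f$ together with $\alpha+\beta<2$; this is where the hypothesis $\alpha+\beta\in(0,2)$ enters beyond the mere well-definedness of $\gamma_{\alpha+\beta}$.
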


Let us now consider the Riesz potential with finite horizon $\mathcal I^\alpha_\rho$. We first show that $\mathcal I^\alpha_\rho$ is well-defined. Recall~\eqref{neigh}.

\begin{lemma}\label{lem:Iad-int}
Let $\alpha\in(0,1)$, $\rho>0$, and $f\in L^1_{\loc}(\R^2)$. Then $\mathcal I^\alpha_\rho f(x)$ is well-defined for a.e.\ $x\in\R^2$ and $\mathcal I^\alpha_\rho f\in L^1_{\loc}(\R^2)$. Moreover, if $E\subseteq \R^2$ is a measurable set and $f\in L^p(E_\rho)$ for some $p\in \left[1,\frac{2}{\alpha}\right)$, then $\mathcal I^\alpha_\rho f\in L^q(E)$ for all $q\in \left[p,\frac{2p}{2-\alpha p}\right)$ and 
$$
\|\mathcal I^\alpha_\rho f\|_{L^q(E)}\le \|Q_\rho^{1-\alpha}\|_{L^\frac{pq}{pq+p-q}(\R^2)}\|f\|_{L^p(E_\rho)}.
$$
In particular, if $f\in L^p(E_\rho)$ for all $p\in [1,2)$, then $\mathcal I^\alpha_\rho f \in L^2(E)$ for all $\alpha\in(0,1)$. 
\end{lemma}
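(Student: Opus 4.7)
The plan is to view $\mathcal I^\alpha_\rho f = Q^{1-\alpha}_\rho * f$ as a standard convolution and exploit the integrability of $Q^{1-\alpha}_\rho$ recalled in~\eqref{eq:Qsd-p}. The key ingredients are Fubini's theorem (for the $L^1_{\rm loc}$ statement) and Young's convolution inequality (for the $L^p \to L^q$ bound).

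\textbf{Step 1: $\mathcal I^\alpha_\rho f$ is well-defined and locally integrable.}
Fix a compact set $K \subset \R^2$. Since $\supp(Q^{1-\alpha}_\rho) \subset B_\rho$, Tonelli's theorem yields
\begin{equation*}
\int_K \int_{\R^2} |f(y)|\, Q^{1-\alpha}_\rho(x-y)\,\de y\,\de x = \int_{K_\rho} |f(y)| \left( \int_K Q^{1-\alpha}_\rho(x-y)\,\de x \right)\de y \le \|Q^{1-\alpha}_\rho\|_{L^1(\R^2)} \|f\|_{L^1(K_\rho)},
\end{equation*}
which is finite since $Q^{1-\alpha}_\rho \in L^1(\R^2)$ by~\eqref{eq:Qsd-p} and $f \in L^1_{\loc}(\R^2)$. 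Therefore $\mathcal I^\alpha_\rho f(x)$ is well-defined for a.e.\ $x \in \R^2$, and $\mathcal I^\alpha_\rho f \in L^1(K)$ for every compact $K$.

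\textbf{Step 2: Integrability of $Q^{1-\alpha}_\rho$ in $L^r$.}
Before applying Young, I would verify that $Q^{1-\alpha}_\rho \in L^r(\R^2)$ precisely for $r \in [1, 2/(2-\alpha))$. From~\eqref{eq:Qsd}, using $0 \le \overline w_\rho \le 1$ and $\supp Q^{1-\alpha}_\rho \subset B_\rho$, one obtains
\begin{equation*}
Q^{1-\alpha}_\rho(x) \le \frac{2-\alpha}{\gamma_\alpha} \int_{|x|}^\infty \frac{1}{r^{3-\alpha}}\,\de r = \frac{1}{\gamma_\alpha}\frac{1}{|x|^{2-\alpha}} \quad\text{for } x \in B_\rho\setminus\{0\},
\end{equation*}
so that $\int_{\R^2} (Q^{1-\alpha}_\rho)^r \le C \int_0^\rho s^{1-(2-\alpha)r}\,\de s$, which is finite iff $r < 2/(2-\alpha)$.

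\textbf{Step 3: Young's convolution inequality.}
Let $f \in L^p(E_\rho)$ with $p \in [1,2/\alpha)$, extended by zero. For $q \ge p$ I set
\begin{equation*}
r \coloneqq \frac{pq}{pq+p-q}, \qquad \text{so that} \quad 1 + \frac{1}{q} = \frac{1}{r} + \frac{1}{p}, \quad r \ge 1.
\end{equation*}
For $x \in E$ one has $B_\rho(x) \subset E_\rho$, hence only values of $f$ on $E_\rho$ contribute. Young's inequality then gives
\begin{equation*}
\|\mathcal I^\alpha_\rho f\|_{L^q(E)} = \|Q^{1-\alpha}_\rho * f\|_{L^q(E)} \le \|Q^{1-\alpha}_\rho\|_{L^r(\R^2)} \|f\|_{L^p(E_\rho)}.
\end{equation*}
The condition $r < 2/(2-\alpha)$ from Step 2 rearranges to $q(2-\alpha p) < 2p$; since $p < 2/\alpha$ ensures $2-\alpha p > 0$, this becomes $q < 2p/(2-\alpha p)$, matching the stated range.

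\textbf{Step 4: The special case.}
If $f \in L^p(E_\rho)$ for all $p \in [1,2)$ and $\alpha \in (0,1)$, then $\frac{2}{1+\alpha} \in (1,2)$, so I can pick $p \in (\tfrac{2}{1+\alpha}, 2)$. Such $p$ satisfies $p < 2 < 2/\alpha$ and $2 < 2p/(2-\alpha p)$, so Step 3 applied with $q = 2$ yields $\mathcal I^\alpha_\rho f \in L^2(E)$.

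The only computational care needed is in Step 3, namely checking that the algebraic manipulation of the three exponents in Young's inequality produces exactly the stated threshold $2p/(2-\alpha p)$; the rest is standard.
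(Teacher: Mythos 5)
Your proof is correct and takes essentially the same approach as the paper: the key estimate $0 \le Q^{1-\alpha}_\rho(x) \le \frac{1}{\gamma_\alpha}|x|^{-(2-\alpha)}$, the resulting membership $Q^{1-\alpha}_\rho \in L^r(\R^2)$ for $r \in [1, 2/(2-\alpha))$, and Young's convolution inequality. The paper compresses Steps 1, 3, and 4 into the single remark that ``both statements follow from Young's convolution inequality,'' whereas you supply the details, including the Tonelli argument for local integrability and the explicit choice $p \in (2/(1+\alpha),2)$ for the $L^2$ conclusion; all of your exponent algebra checks out.
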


\begin{proof}
Let $\alpha\in (0,1)$ and $\rho>0$ be fixed. By~\eqref{eq:Qsd} and the fact that $0 \le \overline w_\rho \le 1$ we can derive the estimate 
\begin{equation}\label{eq:Qsd-est}
0\le Q^{1-\alpha}_\rho(x)\le\frac{1}{\gamma_\alpha}\frac{1}{|x|^{2-\alpha}}\quad\text{for all $x\in\R^2\setminus\{0\}$, $\alpha\in (0,2)$, and $\rho>0$}. 
\end{equation}
Hence, as $\supp (Q^{1-\alpha}_\rho)\subset B_\rho$ by~\eqref{eq:Qsd-p}, we derive
\begin{equation*}
Q^{1-\alpha}_\rho\in L^r (\R^2)\quad\text{for all $r\in \left[1,\frac{2}{2-\alpha}\right)$ and $\rho>0$},
\end{equation*}
and both statements follow from Young's convolution inequality.
\end{proof}

Similarly to classical Riesz potential $\mathcal I^\alpha$, we can prove the following asymptotic result for $\mathcal I^\alpha_\rho$.

\begin{lemma}\label{lem:Iad-con}
Let $\alpha\in (0,1)$, $\rho>0$, and $x\in\R^2$. Let $f\colon B_\rho(x)\to\R$ be continuous in $x$ and bounded. Then,
\begin{equation}\label{eq:Iad-lim}
\lim_{\alpha\to 0}\mathcal I^\alpha_\rho f(x)=f(x),\qquad|\mathcal I^\alpha_\rho f(x)|\le \frac{2\pi\rho^\alpha\|f\|_{L^\infty(B_\rho(x))}}{\alpha\gamma_\alpha}.
\end{equation}
\end{lemma}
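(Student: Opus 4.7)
Plan for the proof of Lemma~\ref{lem:Iad-con}. The strategy is to handle the two assertions separately, starting with the elementary pointwise upper bound and then establishing the limit via a standard approximate-identity argument tailored to the finite-horizon kernel $Q_\rho^{1-\alpha}$.

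For the upper bound I would simply estimate
\[
|\mathcal I^\alpha_\rho f(x)|\le \|f\|_{L^\infty(B_\rho(x))}\int_{B_\rho(x)} Q_\rho^{1-\alpha}(x-y)\,\de y,
\]
and use~\eqref{eq:Qsd-est} together with $\supp Q_\rho^{1-\alpha}\subset B_\rho$ from~\eqref{eq:Qsd-p} to dominate the integrand by $\gamma_\alpha^{-1}|z|^{\alpha-2}$ on $B_\rho$. Polar coordinates then give $\int_{B_\rho}|z|^{\alpha-2}\,\de z = 2\pi\rho^\alpha/\alpha$, yielding the claimed bound immediately.

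For the limit, the essential preliminary step is to show $\int_{\R^2}Q_\rho^{1-\alpha}(z)\,\de z\to 1$ as $\alpha\to 0$. Starting from~\eqref{eq:Qsd} and exchanging the order of integration (the indicator $r>|z|$ integrated in $z$ produces $|B_r|=\pi r^2$), the total mass reduces to $\frac{\pi(2-\alpha)}{\gamma_\alpha}\int_0^\infty \overline w_\rho(r)r^{\alpha-1}\,\de r$. Since $\overline w_\rho\equiv 1$ on $[0,\rho/2]$, $\supp\overline w_\rho\subset[0,\rho)$, and $0\le\overline w_\rho\le 1$, a direct comparison gives
\[
\frac{(\rho/2)^\alpha}{\alpha}\le \int_0^\infty \overline w_\rho(r)r^{\alpha-1}\,\de r\le \frac{\rho^\alpha}{\alpha}.
\]
Multiplying by $\alpha$ and sending $\alpha\to 0$ sandwiches $\alpha\int_0^\infty \overline w_\rho(r)r^{\alpha-1}\,\de r$ between two quantities tending to $1$, while Remark~\ref{rem:gamma-a} gives $\frac{\pi(2-\alpha)}{\alpha\gamma_\alpha}\to 1$. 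Combining these yields $\int_{\R^2}Q_\rho^{1-\alpha}\to 1$.

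With this mass estimate at hand, I would decompose
\[
\mathcal I^\alpha_\rho f(x)-f(x)=\int_{B_\rho(x)}(f(y)-f(x))Q_\rho^{1-\alpha}(x-y)\,\de y + f(x)\left(\int_{\R^2}Q_\rho^{1-\alpha}(z)\,\de z-1\right).
\]
The second summand vanishes as $\alpha\to 0$ by the preceding step. For the first, fix $\varepsilon>0$ and choose $\delta\in(0,\rho)$ from continuity of $f$ at $x$ so that $|f(y)-f(x)|<\varepsilon$ on $B_\delta(x)$; the contribution over $B_\delta(x)$ is then bounded by $\varepsilon\int_{\R^2}Q_\rho^{1-\alpha}$, which is $O(\varepsilon)$. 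On the annulus $\delta\le |y-x|<\rho$, using boundedness of $f$ together with~\eqref{eq:Qsd-est} and polar coordinates bounds the contribution by $2\|f\|_{L^\infty(B_\rho(x))}\frac{2\pi(\rho^\alpha-\delta^\alpha)}{\alpha\gamma_\alpha}$; this vanishes as $\alpha\to 0$ since $(\rho^\alpha-\delta^\alpha)/\alpha\to\log(\rho/\delta)$ while $1/\gamma_\alpha\to 0$ thanks to $\alpha\gamma_\alpha\to 2\pi$. Taking $\alpha\to 0$ and then $\varepsilon\to 0$ finishes the argument.

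I do not expect any serious obstacle. The only delicate point is the computation of $\int Q_\rho^{1-\alpha}$: the kernel concentrates singularly at the origin as $\alpha\to 0$, so both the prefactor $\frac{1}{\gamma_\alpha}$ (which vanishes) and the radial integral (which blows up like $1/\alpha$) must be balanced precisely. The sandwich above, which exploits the specific cut-off structure~\eqref{barwdef} and the asymptotic $\alpha\gamma_\alpha\to 2\pi$ from Remark~\ref{rem:gamma-a}, is designed exactly to carry out this cancellation.
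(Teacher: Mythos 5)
Your proof is correct. The strategy is the same approximate-identity argument as the paper uses, but your organization is cleaner and somewhat different in detail: you first establish, as a self-contained step, that the total mass $\int_{\R^2}Q_\rho^{1-\alpha}\to 1$ by passing through Fubini to get $\frac{\pi(2-\alpha)}{\gamma_\alpha}\int_0^\infty\overline w_\rho(r)r^{\alpha-1}\,\de r$ and sandwiching via $\overline w_\rho\equiv 1$ on $[0,\rho/2]$ and $\supp\overline w_\rho\subset[0,\rho)$; then you run a two-term decomposition of $\mathcal I^\alpha_\rho f(x)-f(x)$ against the constant $f(x)$. The paper instead works with $\sigma=\min\{\ell,\rho/2\}$ and directly compares $\int_{B_\sigma}Q^{1-\alpha}_\rho$ to the Riesz-kernel mass $\frac{2\pi\sigma^\alpha}{\alpha\gamma_\alpha}$ on $B_\sigma$, exploiting $1-\overline w_\rho\equiv 0$ on $[0,\rho/2]$ to bound the discrepancy by $\frac{\pi\rho^\alpha}{\gamma_\alpha 2^\alpha}\to 0$; the fact that the total mass tends to $1$ is thus implicit rather than stated. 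Your version buys a reusable and conceptually clarifying intermediate fact ($Q^{1-\alpha}_\rho$ is an approximate identity as $\alpha\to 0$), while the paper's version is marginally more economical in that it never needs Fubini on all of $\R^2$. One minor stylistic note: for the annulus contribution, the simpler observation that $\rho^\alpha-\delta^\alpha\to 0$ while $\alpha\gamma_\alpha\to 2\pi$ already suffices; the expansion $(\rho^\alpha-\delta^\alpha)/\alpha\to\log(\rho/\delta)$ paired with $1/\gamma_\alpha\to 0$ is correct but slightly indirect.
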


\begin{proof}
For all $\varepsilon>0$ there exists $\ell=\ell(\varepsilon)>0$ such that 
\begin{align}\label{eeps}
|f(y)-f(x)|<\varepsilon \quad\text{for all $y\in B_\ell(x)$}.
\end{align}
We set $\sigma\coloneqq \min\{\ell,\frac{\rho}{2}\}$ and write
\begin{align*}
&\mathcal I^\alpha_\rho f(x)=g^1_{\alpha,\rho}(x)+g^2_{\alpha,\rho}(x)\coloneqq\int_{B_\sigma(x)}f(y)Q^{1-\alpha}_\rho(x-y)\,\de y+\int_{\R^2\setminus B_\sigma(x)}f(y)Q^{1-\alpha}_\rho(x-y)\,\de y.
\end{align*}
By Remark~\ref{rem:gamma-a},~\eqref{eq:Qsd-p}, and~\eqref{eq:Qsd-est}, as $\alpha\to 0$, we have
\begin{align}\label{combi}
|g^2_{\alpha,\rho}(x)|&\le \frac{1}{\gamma_\alpha}\int_{B_\rho(x)\setminus B_\sigma(x)}\frac{|f(y)|}{|x-y|^{2-\alpha}}\,\de y\notag\\
&\le\frac{2\pi\|f\|_{L^\infty(B_\rho(x))}}{\gamma_\alpha}\int_\sigma^\rho r^{\alpha-1}\,\de r\le\frac{2\pi\|f\|_{L^\infty(B_\rho(x))}}{\alpha\gamma_\alpha}(\rho^\alpha-\sigma^\alpha)\to 0.
\end{align}
Since by the definition of $\overline{w}$ it holds that
$$1-\overline w_\rho(s)=0\quad\text{for all $s\in\left[0,\frac{\rho}{2}\right]$},$$
by using~\eqref{eq:Qsd}, Remark~\ref{rem:gamma-a}, and the fact that $\sigma=\min\{\ell,\frac{\rho}{2}\}$, we derive that
\begin{align*}
\left|\frac{2\pi \sigma^\alpha}{\alpha\gamma_\alpha}-\int_{B_\sigma}Q^{1-\alpha}_\rho(y)\,\de y\right|&=\frac{2\pi(2-\alpha)}{\gamma_\alpha}\int_0^\sigma r\int_r^\infty\frac{1-\overline w_\rho(s)}{s^{3-\alpha}}\,\de s\,\de r\\
&\le \frac{2\pi(2-\alpha)}{\gamma_\alpha}\int_0^\sigma r\int_{\frac{\rho}{2}}^\infty\frac{1}{s^{3-\alpha}}\,\de s\,\de r=\frac{2^{2-\alpha}\pi\sigma^2}{\gamma_\alpha\rho^{2-\alpha}}\le \frac{\pi \rho^{\alpha}}{\gamma_\alpha 2^\alpha}.
\end{align*}
Moreover, we can write
$$f(x)=\int_{B_\sigma(x)}f(x)Q^{1-\alpha}_\rho(x-y)\,\de y+f(x)\left(\frac{2\pi \sigma^\alpha}{\alpha\gamma_\alpha}-\int_{B_\sigma}Q^{1-\alpha}_\rho(y)\,\de y\right)+f(x)\left(1-\frac{2\pi \sigma^\alpha}{\alpha\gamma_\alpha}\right).$$
Therefore, by Remark~\ref{rem:gamma-a},~\eqref{eq:Qsd-est}, and~\eqref{eeps}, we have, as $\alpha\to 0$,
\begin{align*}
|g^1_{\alpha,\rho}(x)-f(x)|&\le |f(x)|\left|1-\frac{2\pi \sigma^\alpha}{\alpha\gamma_\alpha}\right|+|f(x)|\frac{\pi \rho^{\alpha}}{\gamma_\alpha 2^\alpha}+\frac{1}{\gamma_\alpha}\int_{B_\sigma(x)}\frac{|f(y)-f(x)|}{|x-y|^{2-\alpha}}\,\de y\\
&\le|f(x)|\left|1-\frac{2\pi \sigma^\alpha}{\alpha\gamma_\alpha}\right|+|f(x)|\frac{\pi \rho^{\alpha}}{\gamma_\alpha 2^\alpha}+\varepsilon\frac{2\pi \sigma^\alpha}{\alpha\gamma_\alpha}\to \varepsilon.
\end{align*}
Hence, combining with~\eqref{combi} we get
\begin{equation*}
\limsup_{\alpha\to 0}\left|\mathcal I^\alpha_\rho f(x)-f(x)\right|\le \varepsilon\quad\text{for all $\varepsilon>0$},
\end{equation*}
which gives the limit in~\eqref{eq:Iad-lim}. Finally, the estimate in~\eqref{eq:Iad-lim} can be obtained by using~\eqref{eq:Qsd-est}.
\end{proof}

Actually, the same result holds also true in the case of a sequence of horizons $(\rho_\alpha)_{\alpha\in (0,1)}$, as long as $\alpha\log\rho_\alpha\to 0$ as $\alpha\to 0$. 

\begin{corollary}\label{coro:Iad-conv}
Let $(\rho_\alpha)_{\alpha\in (0,1)}\subset (0,\infty)$ be such that 
$$
\alpha\log\rho_\alpha\to 0\quad\text{as $\alpha\to 0$}.
$$
Let $R\coloneqq \sup\{\rho_\alpha:\alpha\in (0,1)\}$, $x\in\R^2$, and $f\colon B_R(x)\to \R$ be continuous in $x$ and bounded. Then
\begin{align*}
\lim_{\alpha\to 0}\mathcal I^\alpha_{\rho_\alpha} f(x)=f(x).
\end{align*}
Moreover, if $f\in C_c(\R^n)$, then 
\begin{equation}\label{eq:Iad-unif-lim}
\mathcal I^\alpha_{\rho_\alpha} f\to f\quad\text{in $C(\R^2)$ as $\alpha\to 0$}.
\end{equation}
\end{corollary}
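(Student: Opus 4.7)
The plan is to adapt the argument of Lemma~\ref{lem:Iad-con} to a varying horizon $\rho_\alpha$, exploiting the key equivalent reformulation of the hypothesis: $\alpha\log\rho_\alpha\to 0$ as $\alpha\to 0$ means exactly $\rho_\alpha^\alpha\to 1$ as $\alpha\to 0$ (and also $\ell^\alpha \to 1$ for every fixed $\ell>0$). First I would establish the mass identity
\[
\lim_{\alpha\to 0}\int_{\R^2}Q^{1-\alpha}_{\rho_\alpha}(y)\,\de y=1.
\]
This follows by passing to polar coordinates, applying Fubini, and using~\eqref{eq:Qsd}: a direct calculation yields
\[
\int_{\R^2}Q^{1-\alpha}_{\rho_\alpha}(y)\,\de y=\frac{\pi(2-\alpha)\rho_\alpha^\alpha}{\gamma_\alpha}\int_0^\infty\frac{\overline w(u)}{u^{1-\alpha}}\,\de u.
\]
Splitting the inner integral at $u=1/2$ (where $\overline w\equiv 1$) gives $\int_0^\infty \overline w(u)u^{\alpha-1}\,\de u=\alpha^{-1}(1/2)^\alpha+O(1)$, so multiplying by $\alpha$ yields $1$ in the limit. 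Combined with $\alpha\gamma_\alpha\to 2\pi$ from Remark~\ref{rem:gamma-a} and $\rho_\alpha^\alpha\to 1$, the claim follows.

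For the pointwise statement, decompose
\[
\mathcal I^\alpha_{\rho_\alpha}f(x)-f(x)=\int_{\R^2}(f(y)-f(x))Q^{1-\alpha}_{\rho_\alpha}(x-y)\,\de y+f(x)\bigg(\int_{\R^2}Q^{1-\alpha}_{\rho_\alpha}(y)\,\de y-1\bigg).
\]
The second term vanishes by the mass identity. For the first term, given $\varepsilon>0$, pick $\ell>0$ with $|f(y)-f(x)|<\varepsilon$ on $B_\ell(x)$ using continuity of $f$ at $x$. The contribution from $B_\ell(x)$ is at most $\varepsilon\int_{\R^2}Q^{1-\alpha}_{\rho_\alpha}(x-y)\,\de y$, which tends to $\varepsilon$. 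The complementary contribution is supported in $B_{\rho_\alpha}(x)\setminus B_\ell(x)$ (by~\eqref{eq:Qsd-p}); using the bound~\eqref{eq:Qsd-est} and passing to polars, it is controlled by
\[
2\|f\|_{L^\infty(B_R(x))}\cdot\frac{2\pi}{\alpha\gamma_\alpha}\bigl(\rho_\alpha^\alpha-\ell^\alpha\bigr)_+\longrightarrow 0,
\]
since $\rho_\alpha^\alpha\to 1$ and $\ell^\alpha\to 1$. Taking $\varepsilon\to 0$ concludes the pointwise convergence.

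For~\eqref{eq:Iad-unif-lim}, suppose $f\in C_c(\R^2)$. Since $Q^{1-\alpha}_{\rho_\alpha}$ is supported in $B_{\rho_\alpha}\subset B_R$, the function $\mathcal I^\alpha_{\rho_\alpha}f-f$ is supported in the compact set $(\supp f)_R$. On this set, I would repeat the previous estimates, replacing the continuity at a point with the uniform continuity of $f$: for $\varepsilon>0$ fix $\delta>0$ such that $|f(y)-f(x)|<\varepsilon$ whenever $|y-x|<\delta$. All bounds in the pointwise argument become uniform in $x$ once $\|f\|_{L^\infty(B_R(x))}$ is replaced by $\|f\|_{L^\infty(\R^2)}$, yielding $\|\mathcal I^\alpha_{\rho_\alpha}f-f\|_{L^\infty(\R^2)}\le \varepsilon+o(1)$ as $\alpha\to 0$.

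The main obstacle is the mass estimate $\int Q^{1-\alpha}_{\rho_\alpha}\to 1$: this is precisely where the hypothesis $\alpha\log\rho_\alpha\to 0$ enters, since an uncontrolled scaling $\rho_\alpha^\alpha\not\to 1$ would shift the total mass of the kernel and destroy the approximate-identity property. Once that quantitative control is secured, the rest is a routine splitting argument mirroring Lemma~\ref{lem:Iad-con}.
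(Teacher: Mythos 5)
Your proposal is correct, and it does take a slightly different route from the paper's. The paper's proof simply repeats the argument of Lemma~\ref{lem:Iad-con} verbatim with $\sigma$ replaced by $\sigma_\alpha=\min\{\ell,\rho_\alpha/2\}$, comparing $\int_{B_{\sigma_\alpha}}Q^{1-\alpha}_{\rho_\alpha}$ to the unregularized Riesz mass $\frac{2\pi\sigma_\alpha^\alpha}{\alpha\gamma_\alpha}$ via the property $\overline w_{\rho_\alpha}\equiv 1$ on $[0,\rho_\alpha/2]$, and concluding from $\rho_\alpha^\alpha\to 1$ and $\alpha\gamma_\alpha\to 2\pi$. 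You instead frame the kernel as an approximate identity: you first compute the total mass $\int_{\R^2}Q^{1-\alpha}_{\rho_\alpha}=\frac{\pi(2-\alpha)\rho_\alpha^\alpha}{\gamma_\alpha}\int_0^\infty\overline w(u)u^{\alpha-1}\,\de u\to 1$ (a correct Fubini computation, with the split at $u=1/2$ isolating the $\alpha^{-1}(1/2)^\alpha$ singularity), then decompose $\mathcal I^\alpha_{\rho_\alpha}f(x)-f(x)$ into the mass defect and an integral of $f(y)-f(x)$ against the kernel, and split the latter at the fixed radius $\ell$. Both arguments reduce to the same two facts $\rho_\alpha^\alpha\to 1$ and $\alpha\gamma_\alpha\to 2\pi$; your version makes the approximate-identity mechanism explicit at the cost of one extra computation, while the paper's reuses the structure and intermediate estimates of Lemma~\ref{lem:Iad-con} without restating them. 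One cosmetic point: in the uniform-convergence step you remark that $\mathcal I^\alpha_{\rho_\alpha}f-f$ is supported in $(\supp f)_R$; this presupposes $R<\infty$, which the hypotheses do not guarantee, but the remark is not used since the estimate you give is already uniform in $x$ by uniform continuity.
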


\begin{proof}
We proceed as in Lemma~\ref{lem:Iad-con}: given $\varepsilon>0$, we choose $\ell$ as in~\eqref{eeps}, and then $\sigma_\alpha\coloneqq\min\{\ell,\frac{\rho_\alpha}{2}\}$ for all $\alpha\in (0,1)$. Therefore, as $\alpha\to 0$, we get
\begin{align*}
&|\mathcal I^\alpha_{\rho_\alpha}f(x)-f(x)| \le\frac{2\pi\|f\|_{L^\infty(B_R(x))}}{\alpha\gamma_\alpha}(\rho_\alpha^\alpha-\sigma_\alpha^\alpha)+|f(x)|\left|1-\frac{2\pi \sigma_\alpha^\alpha}{\alpha\gamma_\alpha }\right|+|f(x)|\frac{\pi \rho_\alpha^{\alpha}}{\gamma_\alpha 2^\alpha}+\varepsilon\frac{2\pi \sigma_\alpha^\alpha}{\alpha\gamma_\alpha}\to \varepsilon,
\end{align*}
since $\alpha\gamma_\alpha \to 2\pi$ by Remark~\ref{rem:gamma-a} and $\rho_\alpha^\alpha=\e^{\alpha\log\rho_\alpha}\to 1$ as $\alpha\to 0$.

Now, consider $f\in C_c(\R^2)$. Given $\varepsilon>0$, we can find $\ell>0$, independently of $x$, such that~\eqref{eeps} holds for all $x\in\R^2$. By arguing as before, we get
$$
\|\mathcal I^\alpha_{\rho_\alpha} f-f\|_{L^\infty(\R^2)}\le\frac{2\pi\|f\|_{L^\infty(\R^2)}}{\alpha\gamma_\alpha}(\rho_\alpha^\alpha-\sigma_\alpha^\alpha)+\|f\|_{L^\infty(\R^2)}\left|1-\frac{2\pi \sigma_\alpha^\alpha}{\alpha\gamma_\alpha }\right|+\|f\|_{L^\infty(\R^2)}\frac{\pi \rho_\alpha^{\alpha}}{\gamma_\alpha 2^\alpha}+\varepsilon\frac{2\pi \sigma_\alpha^\alpha}{\alpha\gamma_\alpha},
$$
which implies~\eqref{eq:Iad-unif-lim} by the arbitrariness of $\varepsilon$.
\end{proof}

We conclude with the following lemma, which allows us to compute $\Curl\mathcal I^\alpha_\rho f$ on $\Omega$ by means of $\Curl f$ on the enlarged domain $\Omega_\rho$.

\begin{lemma}\label{lem:curl-Iadxi}
Let $\alpha\in (0,1)$, $\rho>0$, and $\xi\in\R^2$ be fixed. Let $\Omega\subseteq\R^2$ be an open set with $0 \in \Omega$and let $f\in L^1(\Omega_\rho;\R^{2\times 2})$ be such that $$\Curl f=\xi\delta_0\quad\text{in $\mathcal D'(\Omega_\rho;\R^2)$}.$$ 
Then, $\mathcal I^\alpha_\rho f \in L^1(\Omega;\R^{2\times 2})$, $\Curl \mathcal I^\alpha_\rho f\in L^1(\Omega;\R^2)$, and 
\begin{equation}\label{eq:curl-Iadxi}
\Curl \mathcal I^\alpha_\rho f=\xi Q^{1-\alpha}_\rho\quad\text{in $\mathcal D'(\Omega;\R^2)$.} 
\end{equation}
\end{lemma}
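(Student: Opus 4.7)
The identity $\Curl\mathcal{I}^\alpha_\rho f=\xi Q^{1-\alpha}_\rho$ is morally just the convolution identity $\Curl(Q^{1-\alpha}_\rho\ast f)=Q^{1-\alpha}_\rho\ast\Curl f=Q^{1-\alpha}_\rho\ast(\xi\delta_0)=\xi Q^{1-\alpha}_\rho$. The plan is to make this precise by duality, accounting for the fact that $f$ is defined only on $\Omega_\rho$ (extended by zero) and that the hypothesis on $\Curl f$ holds only in $\mathcal{D}'(\Omega_\rho;\R^2)$. Concretely, I will test $\Curl\mathcal{I}^\alpha_\rho f$ against $\Phi\in C_c^\infty(\Omega;\R^2)$, apply Fubini to transfer the convolution with $Q^{1-\alpha}_\rho$ onto $\Phi$, invoke Proposition~\ref{prop:Psd}(i) to commute the convolution with $\nabla$, and then use $\mathcal{I}^\alpha_\rho\Phi$ as an admissible test function for the curl condition on $\Omega_\rho$.

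\textbf{Integrability.} The membership $\mathcal{I}^\alpha_\rho f\in L^1(\Omega;\R^{2\times 2})$ is immediate from Lemma~\ref{lem:Iad-int} with $p=1$, since $Q^{1-\alpha}_\rho\in L^1(\R^2)$ with compact support and $f\in L^1(\Omega_\rho)$. The $L^1$ bound on $\Curl\mathcal{I}^\alpha_\rho f$ will then come for free from $Q^{1-\alpha}_\rho\in L^1(\R^2)$ once the identity~\eqref{eq:curl-Iadxi} is established.

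\textbf{Distributional computation.} Fix $\Phi\in C_c^\infty(\Omega;\R^2)$. Using~\eqref{curlicurl}, Fubini's theorem (justified by $f\in L^1(\Omega_\rho)$, $Q^{1-\alpha}_\rho\in L^1(\R^2)$, and the fact that $\nabla\Phi$ is bounded with compact support), and the evenness of $Q^{1-\alpha}_\rho$, I compute
\begin{align*}
\langle\Curl\mathcal{I}^\alpha_\rho f,\Phi\rangle_{\mathcal{D}'(\Omega)}
&=-\int_\Omega\mathcal{I}^\alpha_\rho f(x)J:\nabla\Phi(x)\,\de x\\
&=-\int_{\Omega_\rho}f(y)J:\left(\int_{\R^2}Q^{1-\alpha}_\rho(x-y)\nabla\Phi(x)\,\de x\right)\de y\\
&=-\int_{\Omega_\rho}f(y)J:(Q^{1-\alpha}_\rho\ast\nabla\Phi)(y)\,\de y =-\int_{\Omega_\rho}f(y)J:\nabla\mathcal{I}^\alpha_\rho\Phi(y)\,\de y,
\end{align*}
the last equality being Proposition~\ref{prop:Psd}(i). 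Now $\mathcal{I}^\alpha_\rho\Phi\in C_c^\infty(\Omega_\rho;\R^2)$: smoothness comes from Proposition~\ref{prop:Psd}(i), while $\supp\mathcal{I}^\alpha_\rho\Phi\subset(\supp\Phi)_\rho$ and $(\supp\Phi)_\rho\subset\subset\Omega_\rho$ follow from $\supp Q^{1-\alpha}_\rho\subset B_\rho$ and $\supp\Phi\subset\subset\Omega$. Hence $\mathcal{I}^\alpha_\rho\Phi$ is admissible as a test function for $\Curl f=\xi\delta_0$ on $\Omega_\rho$, and
\[
-\int_{\Omega_\rho}f(y)J:\nabla\mathcal{I}^\alpha_\rho\Phi(y)\,\de y=\langle\Curl f,\mathcal{I}^\alpha_\rho\Phi\rangle_{\mathcal{D}'(\Omega_\rho)}=\xi\cdot\mathcal{I}^\alpha_\rho\Phi(0).
\]
Using once more the evenness of $Q^{1-\alpha}_\rho$ and $\supp\Phi\subset\Omega$,
\[
\xi\cdot\mathcal{I}^\alpha_\rho\Phi(0)=\xi\cdot\int_{\R^2}Q^{1-\alpha}_\rho(-x)\Phi(x)\,\de x=\int_\Omega\bigl(\xi Q^{1-\alpha}_\rho(x)\bigr)\cdot\Phi(x)\,\de x,
\]
which is exactly $\langle\xi Q^{1-\alpha}_\rho,\Phi\rangle_{\mathcal{D}'(\Omega)}$, yielding~\eqref{eq:curl-Iadxi}.

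\textbf{Main obstacle.} The only delicate point is verifying that $\mathcal{I}^\alpha_\rho\Phi$ is compactly supported inside $\Omega_\rho$ (and not merely in $\R^2$), so that it is genuinely admissible in the curl hypothesis on $\Omega_\rho$; this is ensured by the finite-horizon support property $\supp Q^{1-\alpha}_\rho\subset B_\rho$ from~\eqref{eq:Qsd-p} combined with the definition~\eqref{neigh} of $\Omega_\rho$, and would in fact fail for the classical Riesz potential $\mathcal{I}^\alpha$.
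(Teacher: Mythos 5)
Your proof is correct and follows essentially the same route as the paper: integrability of $\mathcal I^\alpha_\rho f$ via Lemma~\ref{lem:Iad-int}, the duality computation using~\eqref{curlicurl} and Fubini, Proposition~\ref{prop:Psd}(i) to commute the convolution with the gradient, and then testing the curl hypothesis on $\Omega_\rho$ against $\mathcal I^\alpha_\rho\Phi\in C_c^\infty(\Omega_\rho;\R^2)$. The only (welcome) addition beyond the paper's argument is your explicit verification that $\supp\mathcal I^\alpha_\rho\Phi\subset(\supp\Phi)_\rho\subset\subset\Omega_\rho$, which the paper leaves implicit.
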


\begin{proof}
Since $f\in L^1(\Omega_\rho;\R^{2\times 2})$, we get that $\mathcal I^\alpha_\rho f=Q_\rho^{1-\alpha}*f\in L^1(\Omega;\R^{2\times 2})$ by Lemma~\ref{lem:Iad-int}. Since $Q^{1-\alpha}_\rho \in L^1(\Omega)$, it remains to prove formula~\eqref{eq:curl-Iadxi}. To this end, we fix $\Phi\in C_c^\infty(\Omega;\R^2)$. By~\eqref{curlicurl}, Fubini's theorem, and the symmetry of $Q^{1-\alpha}_\rho$ we have
\begin{align*}
\langle\Curl \mathcal I^\alpha_\rho f,\Phi\rangle_{\mathcal D'(\Omega)}&=\langle \Div(\mathcal I^\alpha_\rho f J),\Phi\rangle_{\mathcal D'(\Omega)}=-\int_\Omega\mathcal I^\alpha_\rho f(x) J:\nabla\Phi(x)\,\de x\\
&=-\int_\Omega\left(\int_{B_\rho(x)}f(y)JQ^{1-\alpha}_\rho( x-y )\,\de y\right):\nabla \Phi(x)\,\de x\\
&=-\int_{\Omega_\rho}f(y)J:\left(\int_{B_\rho(y)}\nabla \Phi(x) Q^{1-\alpha}_\rho(y-x)\,\de x\right)\,\de y\\
&=-\int_{\Omega_\rho}f(y)J: \mathcal I^\alpha_\rho \nabla\Phi(y)\,\de y.
\end{align*}
By Proposition~\ref{prop:Psd} we derive that $\mathcal I^\alpha_\rho \Phi\in C_c^\infty(\Omega_\rho; \R^2)$ and $\mathcal I^\alpha_\rho \nabla\Phi=\nabla \mathcal I^\alpha_\rho \Phi$. Hence, again by~\eqref{curlicurl},
\begin{align}\label{lastli}
\langle\Curl \mathcal I^\alpha_\rho f,\Phi\rangle_{\mathcal D'(\Omega)}&=-\int_{\Omega_\rho}f (y)J: \nabla \mathcal I^\alpha_\rho \Phi(y)\,\de y\notag\\
&=\langle \Curl f,\mathcal I^\alpha_\rho \Phi\rangle_{\mathcal D'(\Omega_\rho)}=\xi\cdot \mathcal I^\alpha_\rho \Phi(0)=\int_\Omega\Phi(y)\cdot\xi Q^{1-\alpha}_\rho( y )\,\de y.
\end{align}
Therefore, $\Curl \mathcal I^\alpha_\rho f\in L^1(\Omega;\R^2)$ and formula~\eqref{eq:curl-Iadxi} is satisfied. 
\end{proof}

%
%
%
%
%



\begin{thebibliography}{99}

\bibitem{ABSS}
{\sc R.~Alicandro, A.~Braides, M.~Solci, and G.~Stefani}, 
{\em Topological singularities arising from fractional-gradient energies}, 
Preprint 2023, arXiv: \href{https://arxiv.org/abs/2309.10112}{2309.10112}

\bibitem{ARS23}
{\sc S.~Almi, D.~Reggiani, and F.~Solombrino}, 
{\em Geometric rigidity for incompatible fields in the multi-well case and an application to strain-gradient plasticity}, 
Preprint 2023, arXiv: \href{https://arxiv.org/abs/2311.00438}{2311.00438}

\bibitem{ArOr}
{\sc M.P.~Ariza and M.~Ortiz}, 
{\em Discrete crystal elasticity and discrete dislocations in crystals}, 
Arch.\ Ration.\ Mech.\ Anal.\ \textbf{178} (2005), 149--226

\bibitem{BBS}
{\sc D.J.~Bacon, D.M.~Barnett, and R.O.~Scattergood},
{\em Anisotropic continuum theory of lattice defects},
Progress in Materials Science \textbf{23} (1980), 51--262

\bibitem{BCBM3}
{\sc J.C.~Bellido, J.~Cueto, and C.~Mora-Corral}, 
{\em Fractional Piola identity and polyconvexity in fractional spaces}, 
Ann.\ Inst.\ H.\ Poincaré C Anal.\ Non Linéaire \textbf{37} (2020), 955--981

\bibitem{BCMC}
{\sc J.C.~Bellido, J.~Cueto, and C.~Mora-Corral},
{\em Minimizers of nonlocal polyconvex energies in nonlocal hyperelasticity}, 
Adv.\ Calc.\ Var.\ (2023), doi: \href{https://doi.org/10.1515/acv-2022-0089}{10.1515/acv-2022-0089} 

\bibitem{BCMC2}
{\sc J.C.~Bellido, J.~Cueto, and C.~Mora-Corral}, 
{\em Non-local gradients in bounded domains motivated by continuum mechanics: Fundamental theorem of calculus and embeddings}, 
Adv.\ Nonlinear Anal.\ \textbf{12} (2023), Paper No.\ 20220316, 48 pp.

\bibitem{CeLe05}
{\sc P.~Cermelli and G.~Leoni}, 
{\em Renormalized energy and forces on dislocations}, 
SIAM J.\ Math.\ Anal.\ \textbf{37} (2005), 1131--1160 

\bibitem{CDLM}
{\sc P.~Cesana, L.~De Luca, and M.~Morandotti}, 
{\em Semidiscrete modeling of systems of wedge disclinations and edge dislocations via the Airy stress function method}, 
SIAM J.\ Math.\ Anal.\ \textbf{56} (2024), 79--136

\bibitem{CS}{\sc G.E.~Comi and G.~Stefani}, 
{\em A distributional approach to fractional Sobolev spaces and fractional variation: existence of blow-up}, 
J.\ Funct.\ Anal.\ \textbf{277} (2019), 3373--3435

\bibitem{CGM23}
{\sc S.~Conti, A.~Garroni, and R.~Marziani}, 
{\em Line-tension limits for line singularities and application to the mixed-growth case}, 
Calc.\ Var.\ Partial Differential Equations \textbf{62} (2023), Paper No.\ 228, 55pp.

\bibitem{CoGaMu11}
{\sc S.~Conti, A.~Garroni, and S.~Müller}, 
{\em Singular kernels, multiscale decomposition of microstructure, and dislocation models}, 
\newblock Arch.\ Ration.\ Mech. Anal.\ \textbf{199} (2011), 779--819 

\bibitem{CoGaOr15}
{\sc S.~Conti, A.~Garroni, and M.~Ortiz}, 
{\em The line-tension approximation as the dilute limit of linear-elastic dislocations},
\newblock Arch.\ Ration.\ Mech.\ Anal.\ \textbf{218} (2015), 699--755 

\bibitem{CKS}
{\sc J.~Cueto, C.~Kreisbeck, and H.~Schönberger},
{\em A variational theory for integral functionals involving finite-horizon fractional gradients},
Fract.\ Calc.\ Appl.\ Anal.\ \textbf{26} (2023), 2001--2056

\bibitem{dLGaPo12}
{\sc L.~De~Luca, A.~ Garroni, and M.~Ponsiglione}, 
{\em {$\Gamma$}-convergence analysis of systems of edge dislocations: the self energy regime}, 
Arch.\ Ration.\ Mech.\ Anal. \textbf{206} (2012), 885--910
 
\bibitem{FPP}
{\sc S.~Fanzon, M.~Palombaro, and M.~Ponsiglione}, 
{\em Derivation of linearized polycrystals from a two-dimensional system of edge dislocations}, 
SIAM J.\ Math.\ Anal.\ \textbf{51} (2019), 3956--3981
 
\bibitem{GLP}
{\sc A.~Garroni, G.~Leoni, and M.~Ponsiglione}, 
{\em Gradient theory for plasticity via homogenization of discrete dislocations}, 
J.\ Eur.\ Math.\ Soc.\ (JEMS) \textbf{12} (2010), 1231--1266
 
\bibitem{GSM}
{\sc A.~Garroni, R.~Scala, and R.~Marziani}, 
{\em Derivation of a line-tension model for dislocations from a nonlinear three-dimensional energy: the case of quadratic growth}, 
SIAM J.\ Math.\ Anal.\ \textbf{53} (2021), 4252--4302

\bibitem{Gi17}
{\sc J.~Ginster}, 
{\em Strain-gradient plasticity as the {$\Gamma$}-limit of nonlinear dislocation energy with mixed growth}, 
SIAM J.\ Math.\ Anal.\ \textbf{51} (2019) 3424--3464

\bibitem{Gi20}
{\sc J.~Ginster}, 
{\em Plasticity as {$\Gamma$}-limit of two dimensional dislocation energy: the critical regime without the assumption of well-separateness}, 
Arch.\ Ration.\ Mech.\ Anal.\ \textbf{233} (2019), 1253--1288

\bibitem{Gr97}
{\sc I.~Groma}, 
{\em Link between the microscopic and mesoscopic length-scale description of the collective behavior of dislocations}, 
Phys.\ Rev.\ B \textbf{56} (1997), 5807--5813
 
\bibitem{HirLot}
{\sc J.P.~Hirth and J.~Lothe}, 
Theory of Dislocations,
McGraw-Hill, New York, 1968

\bibitem{KrSch}
{\sc C.~Kreisbeck and H.~Schönberger}, 
{\em Quasiconvexity in the fractional calculus of variations: Characterization of lower semicontinuity and relaxation}, 
Nonlinear Anal.\ \textbf{215} (2022), Paper No.\ 112625, 26pp.

\bibitem{Mizuta}
{\sc Y.~Mizuta}, 
Potential theory in Euclidean spaces. 
GAKUTO International Series. Mathematical Sciences and Applications, 6. Gakkōtosho Co., Ltd., Tokyo, 1996. viii+341 pp.

\bibitem{MuScZe14}
{\sc S.~Müller, L.~Scardia, and C.I.~Zeppieri}, 
{\em Geometric rigidity for incompatible fields, and an application to strain-gradient plasticity}, 
Indiana Univ.\ Math.\ J.\ \textbf{63} (2014), 1365--1396

\bibitem{Po07}
{\sc M.~Ponsiglione}, 
{\em Elastic energy stored in a crystal induced by screw dislocations: from discrete to continuous}, 
SIAM J.\ Math.\ Anal.\ \textbf{39} (2007), 449--469

\bibitem{SZ}
{\sc L.~Scardia and C.I.~Zeppieri}, 
{\em Line-tension model for plasticity as the $\Gamma$-limit of a nonlinear dislocation energy}, 
SIAM J.\ Math.\ Anal.\ \textbf{44} (2012), 2372--2400

\bibitem{ShSp1}
{\sc T.-T.~Shieh and D.E.~Spector},
{\em On a new class of fractional partial differential equations},
Adv.\ Calc.\ Var.\ \textbf{8} (2015), 321--336

\bibitem{ShSp2}
{\sc T.-T.~Shieh and D.E.~Spector},
{\em On a new class of fractional partial differential equations II},
Adv.\ Calc.\ Var.\ \textbf{11} (2018), 289--307

\bibitem{Silling}
{\sc S.A.~Silling}, 
{\em Reformulation of elasticity theory for discontinuities and long-range forces}, J.\ Mech.\ Phys.\ Solids \textbf{48} (2000), 175--209

\bibitem{Silling3}
{\sc S.A.~Silling}, 
{\em Linearized theory of peridynamic states}, 
J.\ Elast. \textbf{99} (2010), 85--111

\bibitem{Silling2}
{\sc S.A.~Silling and R.B.~Lehoucq}, 
{\em Peridynamic theory of solid mechanics}, 
Advances in Applied Mechanics \textbf{44} (2010), 73--168
 
\end{thebibliography}
\end{document}